\newcommand{\N}{\mathbb{N}}
\newcommand{\un}{\underline}
\DeclareMathOperator{\supp}{supp}
\DeclareMathOperator{\card}{card}
\DeclareMathOperator{\rank}{rank}
\DeclareMathOperator{\im}{im}
\renewcommand{\a}{\boldsymbol{a}}
\newcommand{\llb}{\llbracket}
\newcommand{\rrb}{\rrbracket}
\newcommand{\ralpha}{{\boldsymbol{\alpha}}}
\newcommand{\rbeta}{{\boldsymbol{\beta}}}
\newcommand{\rgamma}{{\boldsymbol{\gamma}}}
\newcommand{\rdelta}{{\boldsymbol{\delta}}}
\newcommand{\rsigma}{{\boldsymbol{{\sigma}}}}
\newcommand{\rphi}{{\boldsymbol{\varphi}}}
\newcommand{\rpsi}{{\boldsymbol{\psi}}}
\newcommand{\hatrphi}{{\hat{\boldsymbol{\varphi}}}}
\newcommand{\hatralpha}{{\hat{\boldsymbol{\alpha}}}}
\newcommand{\hatrbeta}{{\hat{\boldsymbol{\beta}}}}
\newcommand{\hatrgamma}{{\hat{\boldsymbol{\gamma}}}}
\newcommand{\hatrsigma}{{\hat{\boldsymbol{\sigma}}}}
\newcommand{\hathat}[1]{%
\begingroup%
  \let\macc@kerna\z@%
  \let\macc@kernb\z@%
  \let\macc@nucleus\@empty%
  \hat{\mathchoice%
    {\raisebox{.4ex}{\vphantom{\ensuremath{\displaystyle #1}}}}%
    {\raisebox{.4ex}{\vphantom{\ensuremath{\textstyle #1}}}}%
    {\raisebox{.3ex}{\vphantom{\ensuremath{\scriptstyle #1}}}}%
    {\raisebox{.14ex}{\vphantom{\ensuremath{\scriptscriptstyle #1}}}}%
    \smash{\hat{#1}}}%
\endgroup%
}
\newcommand{\doublehatrphi}{{\hathat{\pmb{\varphi}}}}
\newcommand{\checkrbeta}{{\boldsymbol{\check{\beta}}}}
\newcommand{\hatcheckrbeta}{{\boldsymbol{\hat{\check{\beta}}}}}
\newcommand{\checkrgamma}{{\boldsymbol{\check{\gamma}}}}
\DeclareMathOperator{\bs}{\mathcal{S}}
\DeclareMathOperator{\R}{\mathcal{R}}
\DeclareMathOperator{\G}{\mathcal{G}}
\DeclareMathOperator{\F}{\mathcal{F}}
\newcommand{\lfrac}[2]{\genfrac{}{}{1pt}{}{#1}{\makebox[20pt][c]{$\scriptstyle
#2$}}}
\definecolor{orange}{RGB}{255,102,0}
\definecolor{ggreen}{RGB}{0,153,0}
\definecolor{darkblue}{RGB}{0,0,255}
\definecolor{purple}{RGB}{153,51,255}
\definecolor{turq}{RGB}{72,209,204}
\definecolor{gray}{RGB}{220,220,220}
\definecolor{orange2}{RGB}{255,100,0}
\definecolor{purple2}{RGB}{159,51,250}
\definecolor{rred}{rgb}{0.9, 0.17, 0.31}
\definecolor{naugreen}{cmyk}{.43,0,.34,.38}
\definecolor{naublue}{cmyk}{1,.72,0,.32}
\definecolor{mediterranean}{cmyk}{.67,0,.08,.3}
\definecolor{rose}{cmyk}{0,1.00,.20,0}
\definecolor{darkorchid}{cmyk}{.6,.9,0,.05}
\definecolor{butterfly}{cmyk}{.95,.59,0,.10}
\definecolor{springgreen}{cmyk}{1.00,0,.70,.02}
\definecolor{darkred}{cmyk}{0,1,1,.5}
\definecolor{nectarine}{cmyk}{0,0.70,1.00,0}
\definecolor{icyblue}{cmyk}{.84,.25,0,.06}
\definecolor{manatee}{rgb}{0.59, 0.6, 0.67}
\newcommand{\textover}[3][l]{%
 \makebox[\widthof{#3}][#1]{#2}%
 }
\newcommand{\subword}[1]{%
\cdots \ \underset{\textover[c]{{$\scriptstyle i$}}{$\scriptstyle
+++$}}{\un{\textover[c]{$#1$}{$s$}}}\checknextarg}
\newcommand{\checknextarg}{\@ifnextchar\bgroup{\gobblenextarg}{\ \cdots \ }}
\newcommand{\gobblenextarg}[2]{\underset{\textover[c]{$\scriptstyle
#1$}{$\scriptstyle
+++$}}{\un{\textover[c]{$#2$}{$s$}}}\@ifnextchar\bgroup{\gobblenextarg}{ \
\cdots }}
\theoremstyle{definition}
\newtheorem{theorem}{Theorem}[section]
\newtheorem{corollary}[theorem]{Corollary}
\newtheorem{lemma}[theorem]{Lemma}
\newtheorem{conjecture}[theorem]{Conjecture}
\newtheorem{definition}[theorem]{Definition}
\newtheorem{proposition}[theorem]{Proposition}
\newtheorem{example}[theorem]{Example}
\newtheorem{remark}[theorem]{Remark}
\tikzset{
    my box/.style = {
        , line cap = round
        , line join = round
    }
}
\newcommand{\highlight}[3]{
\path [my box, line width = #1, draw = #2,opacity=.2] #3;
}
\begin{document}

\title{Braid graphs in simply-laced triangle-free Coxeter systems are partial
cubes}
\author{Fadi Awik, Jadyn Breland, Quentin Cadman, and Dana C.~Ernst}
 
\address{
Department of Mathematics and Statistics,
Northern Arizona University PO Box 5717,
Flagstaff, AZ 86011
}
\email{faa234@nau.edu, qdc4@nau.edu, Dana.Ernst@nau.edu}

\address{
Mathematics Department,
University of California Santa Cruz,
1156 High Street,
Santa Cruz, CA 95064
}
\email{jbreland@ucsc.edu}

\subjclass[2010]{20F55, 05C60, 05E15, 05A05}
\keywords{Coxeter groups, braid class, braid graphs, cubical graphs, partial
cubes, Fibonacci cubes}

\begin{abstract}
In this paper, we study the structure of braid graphs in simply-laced Coxeter
systems. We prove that every reduced expression has a unique factorization as a
product of so-called links, which in turn induces a decomposition of the braid
graph into a box product of the braid graphs for each link factor. When the
Coxeter graph has no three-cycles, we use the decomposition to prove that braid
graphs are partial cubes, i.e., can be isometrically embedded into a hypercube.
For a special class of links, called Fibonacci links, we prove that the
corresponding braid graphs are Fibonacci cubes.
\end{abstract}

\maketitle

\section{Introduction}

Every element $w$ of a Coxeter group $W$ can be written as an expression in the
generators, and if the number of generators in an expression (including
multiplicity) is minimal, we say that the expression is reduced. According to
Matsumoto's Theorem~\cite[Theorem~1.2.2]{Geck2000}, every pair of reduced
expressions for the same group element are related by a sequence of so-called
commutation and braid moves. In light of Matsumoto's Theorem, we can define a
connected graph on the set of reduced expressions of a given element in a
Coxeter group. We define the Matsumoto graph of $w\in W$ to be the graph having
vertex set equal to the set of reduced expressions of $w$, where two vertices
are connected by an edge if and only if the corresponding reduced expressions
are related by a single commutation or braid move. Bergeron, Ceballos, and
Labb\'e~\cite{Bergeron2015} proved that for finite Coxeter groups, every cycle
in a Matsumoto graph has even length. In~\cite{Grinberg2017}, Grinberg and
Postnikov extended this result to arbitrary Coxeter systems. Since every cycle
in a Matsumoto graph has even length, every Matsumoto graph is always
bipartite.

Two reduced expressions for the same Coxeter group element are said to be
commutation equivalent if we can obtain one from the other via a sequence of
commutation moves. The corresponding equivalence classes are referred to as
commutation classes, and have been studied extensively in the literature. In
the case of Coxeter systems of type $A_n$, Elnitsky~\cite{Elnitsky1997} showed
that the set of commutation classes for a given permutation $w$ is in
one-to-one correspondence with the set of rhombic tilings of a certain polygon
determined by $w$. Meng~\cite{Meng2010} studied the number of commutation
classes and their relationships via braid moves, and B\'edard~\cite{Bedard1999}
developed recursive formulas for the number of reduced expressions in each
commutation class. According to~\cite{Humphreys1990}, every finite Coxeter
group contains a unique element of maximal length, called the longest element.
Determining the number of commutation classes for the longest element in
Coxeter systems of type $A_n$ remains an open problem. To our knowledge, this
problem was first introduced in 1992 by Knuth in Section 9 of~\cite{Knuth1992}
using different terminology. In the paragraph following the proof of Proposition~4.4 of~\cite{Tenner2006}, Tenner explicitly states the open problem in terms of commutation classes. Even less is known about the number of commutation classes of the longest elements in other finite Coxeter groups.

Similarly, we define two reduced expressions to be braid equivalent if they are
related by a sequence of braid moves, where the corresponding equivalence
classes are called braid classes. Braid classes have appeared in the work of
Bergeron, Ceballos, and Labb\'e~\cite{Bergeron2015} while
Zollinger~\cite{Zollinger1994a} provided formulas for the cardinality of braid
classes in the case of Coxeter systems of type $A_n$. Fishel et
al.~\cite{Fishel2018} provided upper and lower bounds on the number of reduced
expressions for a fixed permutation in Coxeter systems of type $A_n$ by
studying the commutation classes and braid classes in tandem. However, unlike
commutation classes, braid classes have received very little attention.  Many
natural questions regarding braid classes remain open, even for type $A_n$. For
example, how many braid classes occur for elements of a fixed length? In
particular, which elements for a fixed length have maximally many braid
classes? And how many braid classes does the longest element in the Coxeter
system of type $A_n$ have? Answering the latter question might provide insight
into the analogous question for commutation classes that was mentioned above.

The relationship among the reduced expressions in a fixed braid class can be
encoded in a graph. Define the braid graph for a reduced expression to be the
graph with vertex set equal to the corresponding braid class, where two
vertices are connected by an edge if and only if the corresponding reduced
expressions are related via a single braid move.  Note that every braid graph
is equal to one of the connected components of the graph obtained by deleting
the edges corresponding to commutation moves in the Matsumoto graph for the
corresponding group element. The overarching goal of this paper is to
understand the structure of braid classes in simply-laced Coxeter systems by
studying the combinatorial architecture of the corresponding braid graphs.

We begin by recalling the basic terminology of Coxeter systems and establish
our notation in Section~\ref{sec:prelims}. In Section~\ref{sec:architecture},
for simply-laced Coxeter systems, i.e., all braid moves are of the form
$sts\mapsto tst$, we introduce the notion of a braid shadow, which is the
location where a braid move may be applied.  Loosely speaking, we say that a
reduced expression is a link if it consists of a single generator or there is a
sequence of overlapping braid shadows that spans the positions of the reduced
expression. It turns out that every reduced expression can be written uniquely
as a product of maximal links.  We refer to this product as the link
factorization for the reduced expression.  One consequence of this
factorization is that every braid graph can be decomposed as the box product of
the braid graphs for the corresponding factors
(Corollary~\ref{cor:boxproductsofbraidgraphs}). In Section~\ref{sec:type A}, we
completely characterize links and braid graphs in Coxeter systems of type
$A_n$. Our notion of link coincides with the definition of string introduced by
Zollinger~\cite{Zollinger1994a} in the type $A_n$ case.
Section~\ref{sec:embedding} begins with a few technical lemmas and then
concludes with one of our main results (Theorem~\ref{thm:braid graphs are
partial cubes}), which states that every braid graph in a simply-laced Coxeter
system having no three-cycles in its Coxeter graph can be isometrically
embedded into a hypercube. Such graphs are called partial cubes. In
Section~\ref{sec:fibonacci}, we introduce a special class of links, called
Fibonacci links, and then show that the isometric embedding of the
corresponding braid graph into the hypercube is an isometry to a Fibonacci cube
graph (Theorem~\ref{thm:FibonacciCube}). We conclude with some open questions
in Section~\ref{sec:closing}.

\section{Preliminaries}\label{sec:prelims}

A \emph{Coxeter matrix} is an $n\times n$ symmetric matrix $M=(m_{ij})$ with
entries $m_{ij}\in\{1,2,3,\ldots,\infty\}$ such that $m_{ii}=1$ for all $1\leq
i\leq n$ and $m_{ij}\geq 2$ for $i\neq j$. A \emph{Coxeter system} is a pair
$(W,S)$ consisting of a finite set $S=\{s_1,s_2,\ldots,s_n\}$ and a group $W$,
called a \emph{Coxeter group}, with presentation
\[
W = \langle s_1,s_2,\ldots,s_n \mid (s_is_j)^{m(s_i,s_j)}=e\rangle,
\] 
where $m(s_i,s_j):=m_{ij}$ for some $n\times n$ Coxeter matrix $M=(m_{ij})$.
For $s,t\in S$, the condition $m(s,t)=\infty$ means that there is no relation
imposed between $s$ and $t$. It turns out that the elements of $S$ are distinct
as group elements and $m(s,t)$ is the order of $st$~\cite{Humphreys1990}. Since
elements of $S$ have order two, the relation $(st)^{m(s,t)} = e$ can be written
as
\[
\underbrace{sts \cdots}_{m(s,t)} = \underbrace{tst \cdots}_{m(s,t)}
\]
with $m(s,t)\geq 2$ letters. 
When $m(s,t)=2$, $st=ts$ is called a \emph{commutation relation} and when
$m(s,t) \geq 3$, the corresponding relation is called a \emph{braid relation}.
The replacement
\[
\underbrace{sts\cdots}_{m(s,t)} \longmapsto  \underbrace{tst\cdots}_{m(s,t)}
\]
is called a \emph{commutation move} if $m(s,t)=2$ and a \emph{braid move} if
$m(s,t) \geq 3$. We say that a Coxeter system is \emph{simply laced} provided
that $m(s,t)\leq 3$ for all $s,t\in S$. Our focus in this paper will be on
simply-laced Coxeter systems.

A Coxeter system $(W,S)$ can be encoded by a unique \emph{Coxeter graph}
$\Gamma$ having vertex set $S$ and edges $\{s,t\}$ for each $m(s,t)\geq 3$.
Moreover, each edge is labeled with the corresponding $m(s,t)$, although
typically the labels of $3$ are omitted because they are the most common. In
this case, we say that $(W,S)$, or just $W$, is of type $\Gamma$, and we may
denote the Coxeter group as $W(\Gamma)$ and the generating set as $S(\Gamma)$
for emphasis. In the case that $\Gamma$ has no three-cycles, we say that
$(W,S)$ is \emph{triangle free}.
 
\begin{example}
The Coxeter graphs given in Figure~\ref{fig:labeledgraphs} correspond to four
common simply-laced Coxeter systems. We summarize the defining relations for
the Coxeter systems determined by the graphs in
Figures~\ref{fig:labeledA}~and~\ref{fig:labeledB} below.
\begin{enumerate}[label=(\alph*)]
\item The Coxeter system of type $A_n$ is given by the Coxeter graph in
Figure~\ref{fig:labeledA}. In this case, $W(A_n)$ is generated by $S(A_n) =
\{s_1, s_2, \ldots, s_n\}$ and has defining relations
\begin{enumerate}[label=\arabic*.]
\item $s_i^2 = e$ for all $i$;
\item $s_is_j = s_js_i$ when $|{i-j}| > 1$;
\item $s_is_js_i = s_js_is_j$ when $|{i-j}| = 1$.
\end{enumerate}
The Coxeter group $W(A_n)$ is isomorphic to the symmetric group $S_{n+1}$ under
the mapping that sends $s_i$ to the adjacent transposition $(i,i+1)$.

\item\label{ex:B} The Coxeter system of type $D_n$ is given by the graph in
Figure~\ref{fig:labeledB}. The Coxeter group $W(D_n)$ has generating set
$S(D_n)=\{s_1,s_2, \ldots ,s_{n}\}$ and defining relations
\begin{enumerate}[label=\rm{\arabic*.}]
	\item $s_i^2=e$ for all $i$;
	\item $s_is_j = s_js_i$ if $|i-j| > 1$ and $i,j \neq 1$;
	\item $s_is_j=s_js_i$ if $i=1$ and $j \neq 3$;
	\item $s_1s_3s_1 = s_3s_1s_3$ and $s_is_js_i = s_js_is_j$ if $|i-j| = 1$.
\end{enumerate}
The Coxeter group $W(D_n)$ is isomorphic to the index two subgroup of the group
of signed permutations on $n$ letters having an even number of sign changes.
\end{enumerate}
With the exception of the type $\widetilde{A}_2$ Coxeter system, each of the
Coxeter systems described in Figure~\ref{fig:labeledgraphs} are triangle free.
\end{example}

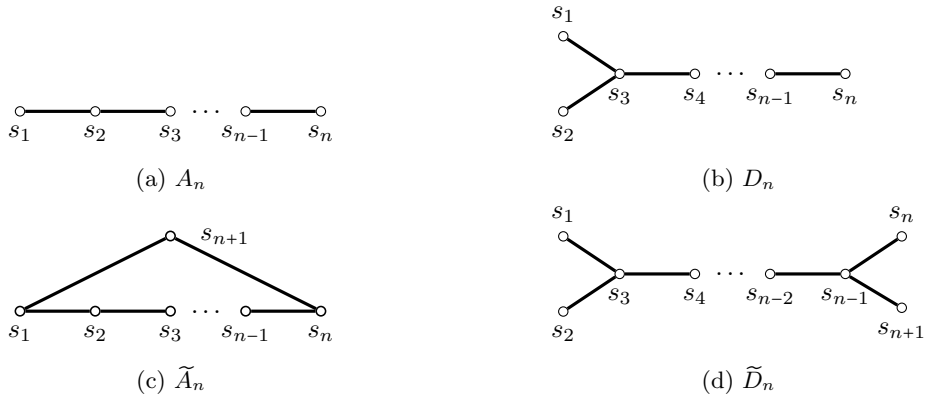
\begin{figure}[h]
\centering
\subcaptionbox{\label{fig:labeledA}$A_{n}$}[.45\linewidth]{
\begin{tikzpicture}[every circle node/.style={draw, circle, inner sep=1.25pt}]
\node [circle] (1) [label=below:$s_1$] at (0,0){};
\node [circle] (2) [label=below:$s_2$] at (1,0){};
\node [circle] (3) [label=below:$s_3$] at (2,0){};
\node at (2.5,0) {$\dots$};
\node [circle] (4) [label=below:$s_{n-1}$] at (3,0){};
\node [circle] (5) [label=below:$s_n$] at (4,0){};
\draw [-, very thick] (1) to (2);
\draw [-, very thick] (2) to (3);
\draw [-, very thick] (4) to (5);
\end{tikzpicture}
}
\subcaptionbox{\label{fig:labeledB}$D_n$}[.45\linewidth]{
\begin{tikzpicture}[every circle node/.style={draw, circle, inner sep=1.25pt}]
\node [circle] (1) [label=below:$s_2$] at (0.25,-0.5){};
\node [circle] (2) [label=below:$s_3$] at (1,0){};
\node [circle] (3) [label=below:$s_4$] at (2,0){};
\node at (2.5,0) {$\dots$};
\node [circle] (4) [label=below:$s_{n-1}$] at (3,0){};
\node [circle] (5) [label=below:$s_{n}$] at (4,0){};
\node [circle] (6) [label=above:$s_1$] at (.25,.5){};
\node [label=below:\phantom{$s_{n+1}$}](8) at (4.75,0){};
alignment
\draw [-, very thick] (1) to (2);
\draw [-, very thick] (2) to (3);
\draw [-, very thick] (4) to (5);
\draw [-, very thick] (2) to (6);
\end{tikzpicture}
}
\subcaptionbox{\label{fig:labeledC}$\widetilde{A}_n$}[.45\linewidth]{
\begin{tikzpicture}[every circle node/.style={draw, circle, semithick, inner
sep=1.25pt}]
\node [circle] (1) [label=below:$s_1$] at (0,0){};
\node [circle] (2) [label=below:$s_2$] at (1,0){};
\node [circle] (3) [label=below:$s_3$] at (2,0){};
\node at (2.5,0) {$\dots$};
\node [circle] (4) [label=below:$s_{n-1}$] at (3,0){};
\node [circle] (5) [label=below:$s_n$] at (4,0){};
\node [circle] (6) [label={[label distance=2mm]right:$s_{n+1}$}] at (2,1){};
\draw [-, very thick] (1) to (2);
\draw [-, very thick] (2) to (3);
\draw [-, very thick] (4) to (5);
\draw [-, very thick] (1) to (6);
\draw [-, very thick] (5) to (6);
\end{tikzpicture}
}
\subcaptionbox{\label{fig:labeledD}$\widetilde{D}_n$}[.45\linewidth]{
\begin{tikzpicture}[every circle node/.style={draw, circle, inner sep=1.25pt}]
\node [circle] (1) [label=below:$s_2$] at (0.25,-0.5){};
\node [circle] (2) [label=below:$s_3$] at (1,0){};
\node [circle] (3) [label=below:$s_4$] at (2,0){};
\node at (2.5,0) {$\dots$};
\node [circle] (4) [label=below:$s_{n-2}$] at (3,0){};
\node [circle] (5) [label=below:$s_{n-1}$] at (4,0){};
\node [circle] (6) [label=above:$s_1$] at (.25,.5){};
\node [circle] (7) [label=above:$s_{n}$] at (4.75,.5){};
\node [circle] (8) [label=below:$s_{n+1}$] at (4.75,-.45){};
\draw [-, very thick] (1) to (2);
\draw [-, very thick] (2) to (3);
\draw [-, very thick] (4) to (5);
\draw [-, very thick] (2) to (6);
\draw [-, very thick] (5) to (7);
\draw [-, very thick] (5) to (8);
\end{tikzpicture}
}
\caption{Examples of common simply-laced Coxeter
graphs.}\label{fig:labeledgraphs}
\end{figure}

Given a Coxeter system $(W,S)$, let $S^*$ denote the free monoid on the
alphabet $S$. An element $\ralpha = s_{x_1}s_{x_2}\cdots s_{x_m}\in S^* $ is
called a \emph{word} while a factor of $\ralpha$ is a word of the form
$s_{x_i}s_{x_{i+1}}\cdots s_{x_{j-1}}s_{x_j}$ for $1 \leq i \leq j \leq m$. If
a word $\ralpha=s_{x_1}s_{x_2}\cdots s_{x_m}\in S^*$ is equal to $w$ when
considered as a group element of $W$, we say that $\ralpha$ is an
\emph{expression} for $w$. If $m$ is minimal among all possible expressions for
$w$, we say that $\ralpha$ is a \emph{reduced expression} for $w$, and we call
$\ell(w):=m$ the \emph{length} of $w$. Note that any factor of a reduced
expression is also reduced. The set of all reduced expressions for $w\in W$
will be denoted by $\R(w)$. According to~\cite{Humphreys1990}, every finite
Coxeter group contains a unique element of maximal length, called the
\emph{longest element}, often denoted by $w_0$ if the context is clear.

For the remainder of this paper, if we are considering a particular labeling of
a Coxeter graph, we will often write $i$ in place of $s_i$ for brevity.

\begin{example}
It is well known that the longest element in $W(A_n)$ is given in one-line
notation by
\[
w_0 = [n+1, n, \ldots, 2, 1]
\]
and has length $\ell(w_0)=\binom{n+1}{2}$. One possible reduced expression for
$w_0\in W(A_n)$ is given by
\[
1\mid 21 \mid 321 \mid \cdots \mid n(n-1)\cdots 321,
\]
where the vertical bars have been placed to aid in recognizing the given
pattern. A formula for the number of reduced expressions for $w_0 \in W(A_n)$
is given in~\cite{Stanley1984}.
\end{example}

The following theorem, called Matsumoto's
Theorem~\cite[Theorem~1.2.2]{Geck2000}, characterizes  the relationship between
reduced expressions for a given group element.

\begin{proposition}[Matsumoto's Theorem]{\label{prop:matsumoto}}
In a Coxeter system $(W,S)$, any two reduced expressions for the same group
element differ by a sequence of commutation and braid moves.
\end{proposition}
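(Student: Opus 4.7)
The plan is to proceed by strong induction on $\ell(w)$. For $\ell(w)\leq 1$ the reduced expression is unique, so the statement is vacuous. For the inductive step, write two reduced expressions for $w$ as $\ralpha = s\ralpha'$ and $\rbeta = t\rbeta'$ with $s,t\in S$; the suffixes $\ralpha'$ and $\rbeta'$ are then reduced expressions for $sw$ and $tw$, respectively. If $s=t$, the inductive hypothesis produces a sequence of braid and commutation moves from $\ralpha'$ to $\rbeta'$, and prepending $s$ to every intermediate word converts this into a sequence from $\ralpha$ to $\rbeta$.

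The substantive case is $s\neq t$. Here the strategy is to route both $\ralpha$ and $\rbeta$ through a common reduced expression whose leading factor is the full alternating word $\underbrace{sts\cdots}_{m(s,t)}$. Concretely, I first establish the following double-descent lemma: if $\ell(sw)<\ell(w)$ and $\ell(tw)<\ell(w)$ with $s\neq t$, then $m(s,t)<\infty$ and there exists a reduced word $\rgamma$ such that $\underbrace{sts\cdots}_{m(s,t)}\,\rgamma$ is a reduced expression for $w$. Granting this lemma, the two words $\underbrace{sts\cdots}_{m(s,t)}\,\rgamma$ and $\underbrace{tst\cdots}_{m(s,t)}\,\rgamma$ are both reduced expressions for $w$ and differ by a single braid move.

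Now $\ralpha$ and $\underbrace{sts\cdots}_{m(s,t)}\,\rgamma$ both start with $s$, so their tails are reduced expressions for $sw$ and are related by the induction hypothesis; prepending $s$ produces a sequence from $\ralpha$ to $\underbrace{sts\cdots}_{m(s,t)}\,\rgamma$. Symmetrically, $\rbeta$ is related to $\underbrace{tst\cdots}_{m(s,t)}\,\rgamma$, and chaining these two sequences together with the single braid move that relates the two alternating expressions completes the induction.

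The main obstacle is proving the double-descent lemma, which really encodes the analytic content of the theorem. The standard route is through the Strong Exchange Condition and the canonical reflection representation: once one sets up the associated root system, the simultaneous length-decrease hypotheses force $w^{-1}$ to send both simple roots $\alpha_s$ and $\alpha_t$ to negative roots, which in turn implies that every positive root of the dihedral subgroup $\langle s,t\rangle$ is sent to a negative root. This forces $m(s,t)<\infty$ and, via repeated application of the Exchange Condition to extract generators one at a time from a fixed reduced expression for $w$, supplies the alternating prefix. An equivalent approach replaces the Strong Exchange Condition with the Deletion Condition, after which the inductive scaffolding above goes through unchanged.
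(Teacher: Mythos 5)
The paper does not actually prove this statement: it is quoted as Matsumoto's Theorem and cited directly from the literature (\cite[Theorem~1.2.2]{Geck2000}), so there is no internal proof to compare against. Your argument is the standard Tits--Matsumoto word-theorem proof, and it is essentially correct as outlined: the induction on $\ell(w)$ with the two cases $s=t$ and $s\neq t$, and the routing of both reduced words through the common expression with alternating prefix $\underbrace{sts\cdots}_{m(s,t)}\rgamma$, is exactly the classical scheme. You correctly isolate where the real content lies, namely the double-descent lemma (if $\ell(sw)<\ell(w)$ and $\ell(tw)<\ell(w)$ with $s\neq t$, then $m(s,t)<\infty$ and $w$ admits a reduced expression beginning with the full alternating word), and your sketch of that lemma via the reflection representation --- $w^{-1}$ sends $\alpha_s$ and $\alpha_t$, hence every positive root of the dihedral parabolic, to negative roots, so finiteness of the inversion set forces $m(s,t)<\infty$, and the Exchange Condition then extracts the alternating prefix --- is the standard and correct route, with no circularity since the Exchange Condition is established independently of Matsumoto's Theorem. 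The only caveat is that the double-descent lemma is left at sketch level, but since it is a textbook fact and you name the precise tools needed, the proposal stands as a faithful proof of the cited result rather than a divergence from anything in the paper.
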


In light of {Matsumoto's Theorem}, we can define a graph on the set of reduced
expressions of a given element in a Coxeter group.  For $w\in W$, define the
\emph{Matsumoto graph} $\G(w)$ to be the graph having vertex set equal to
$\R(w)$, where two reduced expressions $\ralpha$ and $\rbeta$ are connected by
an edge if and only if $\ralpha$ and $\rbeta$ are related via a single
commutation or braid move. Matsumoto's Theorem implies that $\G(w)$ is
connected. The graph obtained by contracting the edges corresponding to
commutation moves in the Matsumoto graph for the longest element in a Coxeter
systems of type $A_n$ has been studied by several authors, usually in the
context of the higher Bruhat order $B(n,2)$~\cite{Manin1989, Ziegler1993} or
rhombic tilings of polygons~\cite{Elnitsky1997}.

\begin{example}\label{ex:longest element An}
Consider the longest element $w_0$ in the Coxeter system of type $A_3$.  It
turns out that $\ell(w_0)=6$ and that there are 16 reduced expressions in
$\R(w_0)$. The corresponding Matsumoto graph is given in
Figure~\ref{fig:Matsumoto graph}. The 16 reduced expressions are the vertices
of $\G(w_0)$ and the edges show how pairs of reduced expressions are related
via commutation or braid moves. In order to distinguish between commutation and
braid moves, we have colored an edge \textcolor{orange2}{orange} if it
corresponds to a commutation move and \textcolor{turq}{blue} if it corresponds
to a braid move.
\end{example}

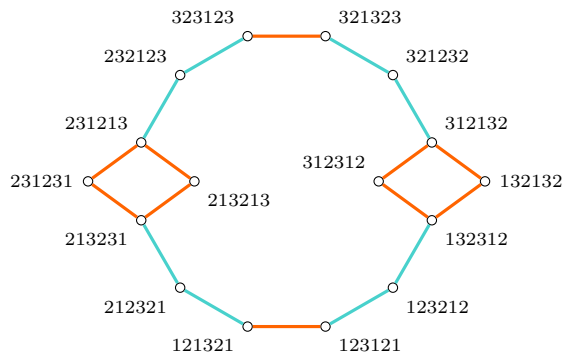
\begin{figure}[!ht]
\begin{center}
\begin{tikzpicture}[every circle node/.style={draw, circle, inner sep=1.25pt}]
\node [circle] (1) [label=below right: $\scriptstyle 132312$] at
({2*cos(15)},{-2*sin(15)}){};
\node [circle] (2) [label=above right: $\scriptstyle 312132$] at
({sqrt(3)*cos(15) + sin(15)},{-sqrt(3)*sin(15) + cos(15)}){};
\node [circle] (3) [label=above right: $\scriptstyle 321232$]at ({cos(15) +
sqrt(3)*sin(15)},{-sin(15) + sqrt(3)*cos(15)}){};
\node [circle] (4) [label=above right:$\scriptstyle 321323$] at
({2*sin(15)},{2*cos(15)}){};
\node [circle] (5) [label=above left:$\scriptstyle 323123$] at
({-1*cos(15)+sqrt(3)*sin(15)},{sin(15) +sqrt(3)*cos(15)}){};
\node [circle] (6) [label=above left:$\scriptstyle 232123$] at
({-sqrt(3)*cos(15) + sin(15)},{sqrt(3)*sin(15) + cos(15)}){};
\node [circle] (7) [label=above left:$\scriptstyle 231213$] at
({-2*cos(15)},{2*sin(15)}){};
\node [circle] (8) [label=below left:$\scriptstyle 213231$] at
({-sqrt(3)*cos(15) - sin(15)},{sqrt(3)*sin(15) -1*cos(15)}){};
\node [circle] (9) [label=below left:$\scriptstyle 212321$] at
({-cos(15)-sqrt(3)*sin(15)},{sin(15) -sqrt(3)*cos(15)}){};
\node [circle] (10) [label=below left:$\scriptstyle 121321$] at
({-2*sin(15)},{-2*cos(15)}){};
\node [circle] (11) [label=below right:$\scriptstyle 123121$] at ({cos(15) -
sqrt(3)*sin(15)},{-sin(15) -sqrt(3)*cos(15)}){};
\node [circle] (12) [label=below right:$\scriptstyle 123212$] at
({sqrt(3)*cos(15) - sin(15)},{-sqrt(3)*sin(15)-cos(15)}){};

\node [circle] (13) [label=right:$\scriptstyle 132132$] at
({2*cos(15)+.707},0){};
\node [circle] (14) [label=above left:$\scriptstyle 312312$] at
({2*cos(15)-.707},0){};
\node [circle] (15) [label=below right:$\scriptstyle 213213$] at
({-2*cos(15)+.707},0){};
\node [circle] (16) [label=left:$\scriptstyle 231231$]at
({-2*cos(15)-.707},0){};

\draw [turq,-, very thick] (2) to (3);
\draw [turq,-, very thick] (3) to (4);
\draw [orange2,-, very thick] (4) to (5);
\draw [turq,-, very thick] (5) to (6);
\draw [turq,-, very thick] (6) to (7);
\draw [turq,-, very thick] (8) to (9);
\draw [turq,-, very thick] (9) to (10);
\draw [orange2,-, very thick] (10) to (11);
\draw [turq,-, very thick] (11) to (12);
\draw [turq,-, very thick] (12) to (1);
\draw [orange2,-, very thick] (7) to (15);
\draw [orange2,-, very thick] (7) to (16);
\draw [orange2,-, very thick] (8) to (15);
\draw [orange2,-, very thick] (8) to (16);
\draw [orange2,-, very thick] (1) to (13);
\draw [orange2,-, very thick] (1) to (14);
\draw [orange2,-, very thick] (2) to (13);
\draw [orange2,-, very thick] (2) to (14);
\end{tikzpicture}
\setlength{\belowcaptionskip}{-6pt}
\caption{Matsumoto graph for the longest element in $W(A_3)$.}
\label{fig:Matsumoto graph}
\end{center}
\end{figure}

We now define two different equivalence relations on the set of reduced
expressions for a given element of a Coxeter group. Let $(W,S)$ be a Coxeter
system of type $\Gamma$ and let $w \in W$. For $\ralpha,\rbeta\in\R(w)$,
$\ralpha \sim_c \rbeta$ if we can obtain $\ralpha$ from $\rbeta$ by applying a
single commutation move. We define the equivalence relation $\approx_c$ by
taking the reflexive and transitive closure of $\sim_c$. Each equivalence class
under $\approx_c$ is called a \emph{commutation class}, denoted $[\ralpha]_c$
for $\ralpha\in\R(w)$. Two reduced expressions are said to be \emph{commutation
equivalent} if they are in the same commutation class.

Similarly, we define $\ralpha \sim_b \rbeta$ if we can obtain $\ralpha$ from
$\rbeta$ by applying a single braid move. The equivalence relation $\approx_b$
is defined by taking the reflexive and transitive closure of $\sim_b$. Each
equivalence class under $\approx_b$ is called a \emph{braid class}, denoted
$[\ralpha]_b$ for $\ralpha\in\R(w)$. Two reduced expressions are said to be
\emph{braid equivalent} if they are in the same braid class.

\begin{example}\label{ex:A3longest}
The set of 16 reduced expressions for the longest element in the Coxeter system
of type $A_3$ is partitioned into eight commutation classes and eight braid
classes:
\begin{equation*}
\begin{aligned}[c]
[232123]_c&=\{232123\}\\ 
[231213]_c&=\{231213,213213,213231,231231\}\\
[321323]_c&=\{321323,323123\}\\
[212321]_c&=\{212321\}\\
[321232]_c&=\{321232\}\\
[123123]_c&=\{123121,121321\}\\ 
[132312]_c&=\{132312,132132,312132,312312\}\\ 
[123212]_c&=\{123212\}
\end{aligned}
\qquad\qquad\qquad
\begin{aligned}[c]
[123121]_b&=\{123121,123212,132312\}\\ 
[312312]_b&=\{312312\}\\
[312132]_b&=\{312132,321232,321323\}\\
[132132]_b&=\{132132\}\\
[121321]_b&=\{121321,212321,213231\}\\
[213213]_b&=\{213213\}\\ 
[231213]_b&=\{231213,232123,323123\}\\ 
[231231]_b&=\{231231\}\end{aligned}
\end{equation*}
In general, it is not the case that the number of commutation classes for a
fixed group element is equal to the number of braid classes. Notice that the
four braid classes of size 3 correspond to the vertices in the
\textcolor{turq}{blue} connected components of the Matsumoto graph given in
Figure~\ref{fig:Matsumoto graph} while the singleton braid classes correspond
to the four vertices that are not incident to any \textcolor{turq}{blue} edges.
A similar structure holds for the commutation classes.
\end{example}

The remainder of this paper will focus exclusively on braid classes in
simply-laced Coxeter systems with an aim of describing their combinatorial
architecture. We will now write $[\ralpha]$ in place of $[\ralpha]_b$.

The relationships among the reduced expressions in a fixed braid class are
encoded by one of the maximal connected components of the underlying Matsumoto
graph consisting only of edges corresponding to braid moves. For example, each
braid class in Example~\ref{ex:A3longest} corresponds to one of the maximal
\textcolor{turq}{blue} connected components in the Matsumoto graph given in
Figure~\ref{fig:Matsumoto graph}. Let $\ralpha$ be a reduced expression for
$w\in W$. We define the \emph{braid graph} for $\ralpha$, denoted $B(\ralpha)$,
to be the graph with vertex set equal to $[\ralpha]$, where
$\ralpha,\rbeta\in[\ralpha]$ are connected by an edge if and only if $\ralpha$
and $\rbeta$ are related via a single braid move. Note that we are defining
braid graphs with respect to a fixed reduced expression (or equivalence class)
as opposed to the corresponding group element. The latter are the graphs that
arise from contracting the edges corresponding to braid moves in the Matsumoto
graph.

\begin{example}\label{ex:braid graphs}
Below we describe four different braid classes and illustrate their
corresponding braid graphs.
\begin{enumerate}[(a)] 
\item Consider the Coxeter system of type $A_4$. The braid class for the
reduced expression $1213243$ consists of the following reduced expressions:
\[
    \ralpha_1 := 1213243, \
    \ralpha_2 := 2123243, \
    \ralpha_3 := 2132343, \
    \ralpha_4 := 2132434.
\]
\item In the Coxeter system of type $A_6$, the expression $1213243565$ is
reduced. Its braid class consists of the following reduced expressions:
\[
\rbeta_1 := 1213243565, \
\rbeta_2 := 2123243565, \
\rbeta_3 := 2132343565, \
\rbeta_4 := 2132434565,
\]
\[
\rbeta_5 := 1213243656, \
\rbeta_6 := 2123243656, \
\rbeta_7 := 2132343656, \
\rbeta_8 := 2132434656.
\]
\item Now, consider the Coxeter system of type $D_4$. The expression $2321434$
is reduced and its braid class consists of the following reduced expressions:
\[
\rgamma_1 := 2321434, \
\rgamma_2 := 3231434, \
\rgamma_3 := 2321343, \
\rgamma_4 := 3231343, \
\rgamma_5 := 3213143.
\]

\item Lastly, consider the Coxeter system of type $D_5$. The expressions in
Part~(c) remain reduced. One can extend the braid chain by appending the
letters 54 to the right of every element of $[\rgamma_1]$. The resulting five
expressions are reduced and braid equivalent. However, the expressions
$\rgamma_1 54$ and $\rgamma_2 54$ each provide a new opportunity to apply a
braid move. The resulting braid chain consists of the following 7 elements:
\[
\rdelta_1 := 232143454, \
\rdelta_2 := 323143454, \
\rdelta_3 := 232134354, \
\rdelta_4 := 323134354,
\]
\[
\rdelta_5 := 321314354, \
\rdelta_6 := 232143545, \
\rdelta_7 := 323143545.
\]
\end{enumerate}
The braid graphs $B(\ralpha_1)$, $B(\rbeta_1)$, $B(\rgamma_1)$, and
$B(\rdelta_1)$ are depicted in Figure~\ref{fig:braidgraphs}.
\end{example}

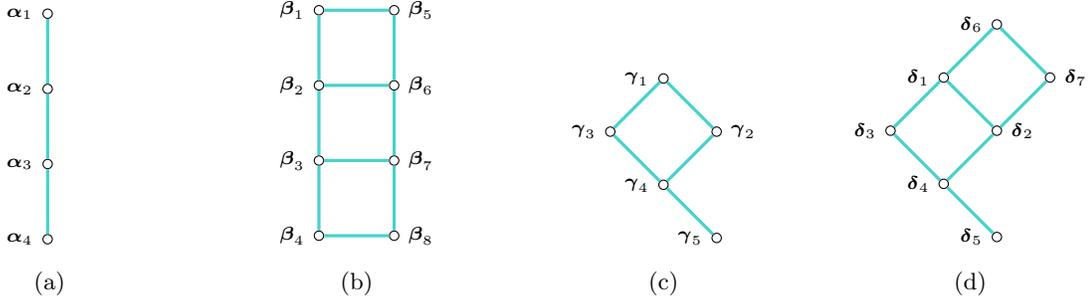
\begin{figure}[!ht]
\centering
\subcaptionbox{\label{fig:braid graph 1}}[.24\linewidth]{
\begin{tikzpicture}[every circle node/.style={draw, circle, inner sep=1.25pt}]
\node [circle] (1) [label=left:$\scriptstyle \ralpha_4$] at (0,1){};
\node [circle] (2) [label=left:$\scriptstyle \ralpha_3$] at (0,2){};
\node [circle] (3) [label=left:$\scriptstyle \ralpha_2$] at (0,3){};
\node [circle] (4) [label=left:$\scriptstyle \ralpha_1$] at (0,4){};
\draw [turq,-, very thick] (1) to (2);
\draw [turq,-, very thick] (2) to (3);
\draw [turq,-, very thick] (3) to (4);
\draw (0,1) node[label=right:\phantom{$\scriptstyle \ralpha_1$}]{};

\end{tikzpicture}
}
\subcaptionbox{\label{fig:braid graph 2}}[.24\linewidth]{
\begin{tikzpicture}[every circle node/.style={draw, circle, inner sep=1.25pt}]
\node [circle] (1) [label=left:$\scriptstyle \rbeta_4$] at (0,1){};
\node [circle] (2) [label=left:$\scriptstyle \rbeta_3$] at (0,2){};
\node [circle] (3) [label=left:$\scriptstyle \rbeta_2$] at (0,3){};
\node [circle] (4) [label=left:$\scriptstyle \rbeta_1$] at (0,4){};
\node [circle] (5) [label=right:$\scriptstyle \rbeta_8$] at (1,1){};
\node [circle] (6) [label=right:$\scriptstyle \rbeta_7$] at (1,2){};
\node [circle] (7) [label=right:$\scriptstyle \rbeta_6$] at (1,3){};
\node [circle] (8) [label=right:$\scriptstyle \rbeta_5$] at (1,4){};

\draw [turq,-, very thick] (1) to (2);
\draw [turq,-, very thick] (2) to (3);
\draw [turq,-, very thick] (3) to (4);
\draw [turq,-, very thick] (5) to (6);
\draw [turq,-, very thick] (6) to (7);
\draw [turq,-, very thick] (7) to (8);
\draw [turq,-, very thick] (1) to (5);
\draw [turq,-, very thick] (2) to (6);
\draw [turq,-, very thick] (3) to (7);
\draw [turq,-, very thick] (4) to (8);
\end{tikzpicture}
}
\subcaptionbox{\label{fig:braidgraphc}}[.24\linewidth]{
\begin{tikzpicture}[every circle node/.style={draw, circle, inner sep=1.25pt}]
\node [circle] (1) [label=left:$\scriptstyle \rgamma_4$] at (0,1){};
\node [circle] (2) [label=left:$\scriptstyle \rgamma_5$] at (.707,.293){};
\node [circle] (3) [label=left:$\scriptstyle \rgamma_3$] at (-.707,1.707){};
\node [circle] (4) [label=right:$\scriptstyle \rgamma_2$] at (.707,1.707){};
\node [circle] (5) [label=left:$\scriptstyle \rgamma_1$] at (0,2.414){};
\draw [turq,-, very thick] (1) to (2);
\draw [turq,-, very thick] (1) to (3);
\draw [turq,-, very thick] (1) to (4);
\draw [turq,-, very thick] (3) to (5);
\draw [turq,-, very thick] (5) to (4);
\end{tikzpicture}
}
\subcaptionbox{\label{fig:braidgraphd}}[.24\linewidth]{
	\begin{tikzpicture}[every circle node/.style={draw, circle, inner sep=1.25pt}]
		\node [circle] (1) [label=left:$\scriptstyle \rdelta_4$] at (0,1){};
		\node [circle] (2) [label=left:$\scriptstyle \rdelta_5$] at (.707,.293){};
		\node [circle] (3) [label=left:$\scriptstyle \rdelta_3$] at (-.707,1.707){};
		\node [circle] (4) [label=right:$\scriptstyle \rdelta_2$] at (.707,1.707){};
		\node [circle] (5) [label=left:$\scriptstyle \rdelta_1$] at (0,2.414){};
		\node [circle] (6) [label=right:$\scriptstyle \rdelta_7$] at (1.414,2.414){};
		\node [circle] (7) [label=left:$\scriptstyle \rdelta_6$] at (.707,3.121){};
		\draw [turq,-, very thick] (1) to (2);
		\draw [turq,-, very thick] (1) to (3);
		\draw [turq,-, very thick] (1) to (4);
		\draw [turq,-, very thick] (3) to (5);
		\draw [turq,-, very thick] (5) to (4);
		\draw [turq,-, very thick] (5) to (7);
		\draw [turq,-, very thick] (6) to (7);
		\draw [turq,-, very thick] (6) to (4);
	\end{tikzpicture}
}
\caption{Braid graphs generated by various reduced
expressions.\label{fig:braidgraphs}}
\end{figure}

\begin{section}{Architecture of braid classes in simply-laced Coxeter
systems}\label{sec:architecture}

In this section, we introduce the notions of braid shadow and link, thus
allowing us to provide a factorization of reduced expressions in simply-laced
Coxeter systems into products of maximal links. This in turn yields a
decomposition of the braid graph for a reduced expression in simply-laced
Coxeter systems into a box product of the braid graphs for the corresponding
link factors.

If $i,j\in\mathbb{N}$ with $i\leq j$, then we define the interval $\llb
i,j\rrb:=\{i,i+1,\ldots, j-1,j\}$. We define the degenerate interval $\llb
i,i\rrb$ to be the singleton set $\{i\}$, which we may write as $\llb i\rrb$.
We will use the intervals $\llb i,j\rrb$ to represent positions in a reduced
expression.

\begin{definition}\label{def:support}
Suppose $(W,S)$ is a Coxeter system. If $\ralpha=s_{x_1}s_{x_2}\cdots s_{x_m}$
is a reduced expression for $w\in W$, we define the \emph{local support} of
$\ralpha$ over $\llb i,j\rrb$ via
\[
\supp_{\llb i,j\rrb}(\ralpha):=\{s_{x_k}\mid k\in \llb i,j\rrb\}. 
\]
The \emph{local support} of the braid class $[\ralpha]$ over $\llb i,j\rrb$ is
defined by
\[
\supp_{\llb i,j\rrb}([\ralpha]):=\bigcup_{\rbeta\in[\ralpha]}\supp_{\llb
i,j\rrb}(\rbeta).
\]
\end{definition}

That is, $\supp_{\llb i,j\rrb}(\ralpha)$ is the set consisting of the
generators that appear in positions $i,i+1,\dots, j$ of $\ralpha$ while
$\supp_{\llb i,j\rrb}([\ralpha])$ is the set of generators that appear in
positions $i,i+1,\dots,j$ of any reduced expression braid equivalent to
$\ralpha$. Also, if $\ralpha = s_{x_1}s_{x_2}\cdots s_{x_m}$, we let
$\ralpha_{\llb i,j\rrb }$ denote the factor $s_{x_i}s_{x_{i+1}}\cdots
s_{x_{j-1}}s_{x_j}$ of $\ralpha$. In the case of the  degenerate interval
$\llbracket i,i\rrbracket$, we will use the notation $\supp_{\llb
i\rrb}(\ralpha)$, $\supp_{\llb i\rrb}([\ralpha])$, and $\ralpha_{\llb i\rrb}$,
and we will simply write $\supp(\ralpha)$ for the set of generators that appear
in $\ralpha$.

Throughout the remainder of this section, we assume that $(W,S)$ is a
simply-laced Coxeter system. It is worth pointing out that many of the
following results do not hold in arbitrary Coxeter systems.

\begin{definition}\label{def:braidshadow}
Suppose that $(W,S)$ is a simply-laced Coxeter system. If $\ralpha =
s_{x_1}s_{x_2}\cdots s_{x_m}$ is a reduced expression for $w\in W$, then the
interval $\llb i,i+2\rrb$ is a \emph{braid shadow} for $\ralpha$ if
$s_{x_i}=s_{x_{i+2}}$ and
$m(s_{x_i},s_{x_{i+1}})=3=m(s_{x_{i+1}},s_{x_{i+2}})$. The collection of braid
shadows for $\ralpha$ is denoted by $\bs(\ralpha)$ and the set of braid shadows
for the braid class $[\ralpha]$ is given by
\[
\bs([\ralpha]) : = \bigcup_{\rbeta\in[\ralpha]}{\bs(\rbeta)}.
\]
The cardinality of $\bs([\ralpha])$ is called the \emph{rank} of $\ralpha$,
which we denote by $\rank(\ralpha)$.
\end{definition}

Note that if $\ralpha = s_{x_1}s_{x_2}\cdots s_{x_m}$ is a reduced expression
for $w\in W$ such that $0\leq m\leq 2$, then $\rank(\ralpha)=0$. Of course, the
converse is not always true.

A braid shadow is simply the location in a reduced expression where we have an
opportunity to apply a braid move. A reduced expression may have many braid
shadows or possibly none at all. The sets $\bs(\ralpha)$ and $\bs([\ralpha])$
capture the locations where braid moves can be performed in $\ralpha$ and any
reduced expression braid equivalent to $\ralpha$, respectively. If $\llb
i,i+2\rrb$ is a braid shadow for $[\ralpha]$, then we may refer to position
$i+1$ in any reduced expression in $[\ralpha]$ as the \emph{center} of the
braid shadow.

\begin{example}\label{ex:braidshadows}
Consider the reduced expressions given in Example~\ref{ex:braid graphs}. By
inspection, we see that:
\begin{enumerate}[(a)]
\item  $\bs(\ralpha_1) = \{\llb 1,3\rrb\}$ and $\bs([\ralpha_1]) = \{\llb
1,3\rrb,\llb 3,5\rrb, \llb 5,7\rrb\}$,
\item $\bs(\rbeta_1) = \{\llb 1,3\rrb ,\llb 8,10\rrb\}$ and $\bs([\rbeta_1]) =
\{\llb 1,3\rrb,\llb 3,5\rrb,\llb 5,7\rrb,\llb 8,10\rrb\}$,
\item $\bs(\rgamma_1) = \{\llb 1,3\rrb,\llb 5,7\rrb\}$ and $\bs([\rgamma_1]) =
\{\llb 1,3\rrb,\llb 3,5\rrb,\llb 5,7\rrb\}$.
\end{enumerate}
\end{example}

The following metric will be useful in the proof of
Proposition~\ref{prop:hugh}.

\begin{definition}\label{def:metric}
Suppose $(W,S)$ is a simply-laced Coxeter system. If $\ralpha$ and $\rbeta$ are
two braid equivalent reduced expressions for $w\in W$, then the \emph{braid
distance} $d(\ralpha,\rbeta)$ between $\ralpha$ and $\rbeta$ is defined to be
the minimum number of braid moves required to transform $\ralpha$ into $\rbeta$.
\end{definition}

Equivalently, we can interpret the number $d(\ralpha,\rbeta)$ as the length of
any minimal path from $\ralpha$ to $\rbeta$ in the corresponding braid graph.
This is a consequence of Matsumoto's Theorem
(Proposition~\ref{prop:matsumoto}).

Section~2.1 of~\cite{Fishel2018} explicitly states that for Coxeter systems of
type $A_n$, braid shadows for a braid class are either disjoint or overlap by a
single position. The next proposition extends this phenomenon to arbitrary
simply-laced Coxeter systems.

\begin{proposition}\label{prop:hugh}
Suppose $(W,S)$ is a simply-laced Coxeter system. If $\ralpha$ is a reduced
expression for $w\in W$ with $\llb i,i+2\rrb \in \bs([\ralpha])$, then $\llb
i+1,i+3\rrb \not \in \bs([\ralpha])$.
\end{proposition}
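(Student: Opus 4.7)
The plan is to argue by contradiction. Suppose both $\llb i, i+2\rrb$ and $\llb i+1, i+3\rrb$ lie in $\bs([\ralpha])$, and choose $\rbeta, \rgamma \in [\ralpha]$ with $\llb i, i+2\rrb \in \bs(\rbeta)$ and $\llb i+1, i+3\rrb \in \bs(\rgamma)$ such that $d := d(\rbeta,\rgamma)$ is as small as possible. Write $\rbeta_{\llb i, i+2\rrb} = sts$ with $m(s,t) = 3$, and induct on $d$. For the base case $d = 0$, the expression $\rbeta = \rgamma$ carries both shadows, which forces the length-$4$ subword $\rbeta_{\llb i, i+3\rrb}$ to be $stst$; but the braid relation $sts = tst$ gives $stst = ts$ in $W$, so $stst$ has length only $2$, contradicting that every subword of a reduced expression is itself reduced.

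For $d \geq 1$, I would consider the first move $\rbeta \to \rbeta^{(1)}$ of a shortest braid path, which occurs at some shadow $\llb j, j+2\rrb$ of $\rbeta$, and narrow $j$ by elimination. If $|j-i| \geq 3$, the move alters only positions outside $\llb i, i+2\rrb$, so $\llb i, i+2\rrb \in \bs(\rbeta^{(1)})$ and the pair $(\rbeta^{(1)}, \rgamma)$ is a witness of distance $d-1$, contradicting minimality. If $j = i$, the move sends $sts \mapsto tst$, which is again a shadow at $\llb i, i+2\rrb$, and the same contradiction applies. If $j \in \{i-1, i+1\}$, then the hypothesized shadow at $\llb j, j+2\rrb$, when overlaid with the shadow at $\llb i, i+2\rrb$, forces $\rbeta$ itself to contain a $tsts$ or $stst$ subword, ruled out exactly as in the base case. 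Therefore $j \in \{i-2, i+2\}$. A parallel analysis of the last move $\rbeta^{(d-1)} \to \rgamma$ (reversing the path and exchanging the roles of the two shadows) forces its position to lie in $\{i-1, i+3\}$. Since these two sets are disjoint, $d \geq 2$, and minimality also prevents any intermediate $\rbeta^{(k)}$ from carrying either shadow.

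To push the contradiction through, I would use that two consecutive moves whose positions $j, j'$ satisfy $|j - j'| \geq 3$ have disjoint position sets and therefore commute. If in our shortest path the second move has $|j_2 - j_1| \geq 3$, swapping the first two moves produces a new first move that does not disturb $\llb i, i+2\rrb$, contradicting minimality. Iterating this reshuffling pins the early moves to positions within distance $2$ of $\{i-2, i+2\}$ and the late moves to positions within distance $2$ of $\{i-1, i+3\}$. A careful local tracking of the letters at positions $i-2$ through $i+4$ along the reshuffled path then exhibits a forbidden non-reduced subword in some $\rbeta^{(k)}$: either a short alternating pattern $stst$ or $tsts$, or a length-$6$ pattern such as $stsXtX$ with $m(X, t) = 3$, which collapses under two braid moves to the length-$4$ word $tsXt$.

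The main obstacle is the endgame case analysis. The narrowing of the first and last moves to $\{i-2, i+2\}$ and $\{i-1, i+3\}$ is clean, but ruling out every compatible sequence of intermediate moves requires tracking several adjacent positions simultaneously and, in each residual configuration, identifying the forbidden non-reduced subword. This bookkeeping is the technical heart of the proof, and I expect it to absorb the bulk of the work.
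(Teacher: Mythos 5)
Your overall strategy—minimal counterexample, analysis of the first and last moves of a shortest braid path, commuting of disjoint moves—is the same as the paper's, and your eliminations of $j=i$, $j\in\{i-1,i+1\}$, and $|j-i|\geq 3$ are correct. But there is a genuine gap, and it sits exactly where you say the "bulk of the work" remains. The paper does not fix the index $i$: it minimizes the braid distance over the union $P=\bigcup_i P_i$, where $P_i$ collects \emph{all} pairs witnessing overlapping shadows at positions $i$ and $i+1$, for every $i$ at once. That cross-index minimality is the device that closes the argument: if the first move acts on $\llb i+2,i+4\rrb$, the resulting expression has a shadow at $\llb i+2,i+4\rrb$, so the new pair witnesses overlapping shadows at indices $i+1,i+2$, i.e., lies in $P_{i+1}$ at strictly smaller distance—immediate contradiction; likewise any later move acting on $\llb i-1,i+1\rrb$ or $\llb i,i+2\rrb$ produces a pair in $P_{i-1}$ or $P_i$ at smaller distance. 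This pins the first move to $\llb i-2,i\rrb$, and a short commuting argument then shows no move of the path ever touches positions $i+1,i+2,i+3$, forcing $stst$ inside $\rbeta$, which is not reduced. Because you minimize only for the fixed $i$, you cannot rule out the first move at $\llb i+2,i+4\rrb$, and you are left with the endgame you only sketch.

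That endgame is not routine bookkeeping, and as outlined it does not go through. First, confining attention to positions $i-2$ through $i+4$ is unjustified: overlapping shadows can chain arbitrarily far (this is precisely the link phenomenon of Definition~\ref{def:braidchain}), so a shortest path between your two witnesses may involve moves far from $i$ whose effects cascade back into the window; controlling them is exactly what the paper's index-shift plus left/right commuting argument accomplishes. Second, your reshuffling claim is false as stated: take $j_1=i-2$ and $j_2=i+2$; then $|j_2-j_1|\geq 3$ and the moves commute, but after swapping, the new first move changes position $i+2$ and hence destroys the shadow $\llb i,i+2\rrb$, so no contradiction with minimality results. The natural repair is the paper's: enlarge the minimization to all shadow positions simultaneously, so that each of these residual configurations lands in some $P_{i'}$ at strictly smaller distance rather than requiring a local case analysis.
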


\begin{proof}
The result is trivial if $\ell(w)\leq 3$. Suppose $\ell(w)=m \geq 4$. For each
$i\in \{1,2,\ldots, m-3\}$, define
\[
P_i := \big\{ (\rbeta,\rgamma) \mid \rbeta, \rgamma\in[\ralpha], \llb
i,i+2\rrb\in \bs(\rbeta) \ \text{and} \ \llb i+1,i+3\rrb\in\bs(\rgamma) \big\}
\]
and let
\[
P := \bigcup_{i=1}^{m-3}{P_i}.
\]
We will prove that $P=\emptyset$. Assume otherwise and choose $(\rbeta,\rgamma)
\in P$ such that $d(\rbeta,\rgamma)$ is minimal among all elements of $P$.
Since $(\rbeta,\rgamma)\in P$, there exists $i\in \{1,\ldots,m-3\}$ such that
$(\rbeta,\rgamma)\in P_i$. This implies that $\llb i,i+2\rrb\in \bs(\rbeta)$
while $\llb i+1,i+3\rrb\in \bs(\rgamma)$. Let $\rbeta_{\llb i,i+1\rrb}=sts$ and
$\rgamma_{\llb i+1,i+3\rrb}=uvu$, where $m(s,t)=3=m(u,v)$. Suppose
$d(\rbeta,\rgamma) = k$ and say $\ralpha_0:=\rbeta, \ralpha_1, \ldots,
\ralpha_{k-1}, \ralpha_k:=\rgamma$ is a minimal sequence of braid equivalent
reduced expressions, each one braid move apart, that transforms $\rbeta$ into
$\rgamma$ in $k$ braid moves. Let $b_j$ denote the braid move that transforms
$\ralpha_{j-1}$ into $\ralpha_j$. We can represent this sequence of moves
visually as follows:
\[
\underbrace{\subword{s}{i+1}{t}{i+2}{s}{i+3}{?}}_{\ralpha_0}
\ \overset{b_1}{\longmapsto}\cdots \overset{b_k}{\longmapsto} \
\underbrace{\subword{?}{i+1}{u}{i+2}{v}{i+3}{u}}_{\ralpha_k}
\]
Suppose $b_1$ transforms $\ralpha_0$ into $\ralpha_1$ such that $b_1$ does not
involve position $i$ nor position $i+3$. Then $(\ralpha_1,\ralpha_k)\in P_{i}$
while $d(\ralpha_1,\ralpha_k) = k-1$, a contradiction. So, the opening braid
move $b_1$ must involve either position $i$ or position $i+3$. However, if
$b_1$ acts on positions $\llb i-1,i+1\rrb$ or $\llb i+1,i+3\rrb$, then
$\ralpha_0$ is not reduced. On the other hand, if $b_1$ transforms $\ralpha_0$
into $\ralpha_1$ by applying the braid move $sts \mapsto tst$ in positions
$\llb i,i+2\rrb$, then $(\ralpha_1,\ralpha_k) \in P_i$ while
$d(\ralpha_1,\ralpha_k) = k-1$, again a contradiction. Hence $b_1$ must act on
positions $\llb i+2,i+4\rrb$ or positions $\llb i-2,i\rrb$.

Assume that $b_1$ acts on positions $\llb i+2,i+4\rrb$. Then there exists $x\in
S$ with $m(s,x) = 3$ such that
\[
\underbrace{\subword{s}{i+1}{t}{i+2}{s}{i+3}{x}{i+4}{s}}_{\ralpha_0}
\ \overset{b_1}{\longmapsto} \
\underbrace{\subword{s}{i+1}{t}{i+2}{x}{i+3}{s}{i+4}{x}}_{\ralpha_1}
\]
This implies that $(\ralpha_k,\ralpha_1) \in P_{i+1}$ while
$d(\ralpha_k,\ralpha_1) = k-1$, which is a contradiction. We can conclude that
$b_1$ acts on positions $\llb i-2,i\rrb$. Now, define the subsequences
\[
L : = \{b_n \mid b_n \text{ acts on } \llb j,j+2\rrb \ \text{for} \ j< i-1\}
\text{ and } R: = \{b_n \mid b_n \text{ acts on } \llb j,j+2\rrb \ \text{for} \
j\geq i-1\}.
\]
Note that $b_1\in L$. We will show that $R=\emptyset$. Assume otherwise and let
$b_r\in R$. If $b_r$ acts on $\llb i-1,i+1\rrb$, then
$(\ralpha_{r-1},\ralpha_0)\in P_{i-1}$ while $d(\ralpha_{r-1},\ralpha_0) = r-1
< k$. Similarly, if $b_r$ acts on $\llb i,i+2\rrb$, then
$(\ralpha_{r-1},\ralpha_k)\in P_i$ while $d(\ralpha_{r-1},\ralpha_k) = k-(r-1)
< k$. In either case, we contradict the minimality of $k$. This shows that the
positions that the braid moves in $L$ and $R$ act on respectively do not
overlap, and so the braid moves in $R$ can be applied in any order relative to
the braid moves in $L$. In particular, the braid moves in $R$ could be applied
prior to any of the braid moves in $L$. But this contradicts the fact that
$b_1\in L$. Therefore, the sequence $b_1,\ldots,b_k$ never acts on positions to
the right of position $i$. But this is impossible since these positions are
disjoint from $\llb i+1,i+3\rrb$, which we must necessarily change to arrive at
$\ralpha_k$. We conclude that $P$ is empty, which yields the desired result.
\end{proof}

Proposition~\ref{prop:hugh} motivates the following definition.

\begin{definition}\label{def:braidchain}
Suppose $(W,S)$ is a simply-laced Coxeter system. If $\ralpha =
s_{x_1}s_{x_2}\cdots s_{x_m}$ is a reduced expression for $w\in W$ with $m\geq
1$, then we say that $\ralpha$ is a \emph{link} provided that either $m=1$ or
$m$ is odd and $\bs([\ralpha]) = \{\llb 1,3\rrb,\llb 3,5\rrb,\ldots,\llb
m-4,m-2\rrb,\llb m-2,m\rrb\}$. If $\ralpha$ is a link, then the corresponding
braid class $[\ralpha]$ is called a \emph{braid chain}.
\end{definition}

Loosely speaking, $\ralpha$ is link if there is a sequence of overlapping braid
moves that ``cover" the positions $1,2,\ldots, m$. Note that if $\ralpha$ is a
link, then the rank of $\ralpha$ is $k$ if and only if $\ralpha$ consists of
$2k+1$ letters. Notice that the center of every braid shadow for a braid chain
is an even index.

\begin{example}\label{ex:linkschains}
Consider the reduced expressions given in Example~\ref{ex:braid graphs}. Since
$\bs([\ralpha_1]) = \{\llb 1,3\rrb,\llb 3,5\rrb,\llb 5,7\rrb\}$, $\ralpha_1$ is
a link and $[\ralpha_1]$ is a braid chain. On the other hand, since
$\bs([\rbeta_1]) = \{\llb 1,3\rrb,\llb 3,5\rrb,\llb 5,7\rrb,\llb 8,10\rrb\}$,
it follows that $\rbeta_1$ is not a link and hence $[\rbeta_1]$ is not a braid
chain. However, it turns out that the factors $1213243$ and $565$ of $\rbeta_1$
are links in their own right. Lastly, since $\bs([\rgamma_1])=\{\llb
1,3\rrb,\llb 3,5\rrb,\llb 5,7\rrb\}$, $\rgamma_1$ is a link and $[\rgamma_1]$
is a braid chain.
\end{example}

\begin{definition}\label{def:link factor}
Suppose $(W,S)$ is a simply-laced Coxeter system. If $\ralpha$ is a reduced
expression for $w\in W$ with $\ell(w)\geq 1$, then we say that $\rbeta$ is a
\emph{link factor} of $\ralpha$ provided that
\begin{enumerate}[(a)]
\item $\rbeta$ is a factor of $\ralpha$,
\item $\rbeta$ is a link, and
\item for every factor $\rgamma$ of $\ralpha$, if $\rbeta$ is a factor of
$\rgamma$ and $\rgamma$ is a link, then $\rbeta=\rgamma$.
\end{enumerate}
\end{definition}

It follows immediately from Definition~\ref{def:link factor} that every reduced
expression $\ralpha$ for a nonidentity group element can be written uniquely as
a product of link factors, say $\ralpha_1\ralpha_2\cdots\ralpha_k$, where each
$\ralpha_i$ is a link factor of $\ralpha$. We refer to this product as the
\emph{link factorization} of $\ralpha$. For emphasis, we will often denote such
a factorization via $\ralpha = \ralpha_1\mid\ralpha_2\mid\cdots\mid \ralpha_k$.
For convenience we say that the link factorization of the identity is product
of a single copy of the empty word, but it is important to note that the empty
word is not actually a link. The following proposition is a direct consequence
of the definitions.

\begin{proposition}\label{prop:class as products of factors}
Suppose $(W,S)$ is a simply-laced Coxeter system. If $\ralpha$ is a reduced
expression for $w\in W$ with link factorization
$\ralpha_1\mid\ralpha_2\mid\cdots\mid\ralpha_k$, then
\[
[\ralpha] = \big\{\rbeta_1\mid \rbeta_2\mid \cdots\mid \rbeta_k : \rbeta_i \in
[\ralpha_i]\text{ for } 1\leq i\leq k\big\}.
\]
Moreover, the cardinality of the braid class for $\ralpha$ is given by
\[
\card([\ralpha]) = \prod_{i=1}^k\card([\ralpha_i]),
\]
and the rank of $\ralpha$ is given by
\[
\rank(\ralpha) = \sum_{i=1}^k \rank(\ralpha_i).
\]
\end{proposition}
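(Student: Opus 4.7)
The plan is to establish the set equality $[\ralpha] = \{\rbeta_1\mid\cdots\mid\rbeta_k : \rbeta_i \in [\ralpha_i]\}$ first; the cardinality formula is then immediate from the bijectivity of concatenation given fixed block lengths, and the rank formula follows once one knows that $\bs([\ralpha])$ decomposes as the disjoint union of the shifted sets $\bs([\ralpha_i])$. The easy inclusion $\{\rbeta_1\mid\cdots\mid\rbeta_k : \rbeta_i \in [\ralpha_i]\} \subseteq [\ralpha]$ is handled by realizing each equivalence $\ralpha_i \approx_b \rbeta_i$ via braid moves acting only inside block $i$; these moves remain legal inside $\ralpha$ and together carry $\ralpha$ to the desired concatenation.

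For the reverse inclusion I would induct on the braid distance $d(\ralpha, \rsigma)$ for $\rsigma \in [\ralpha]$. The base case is trivial. In the inductive step, pick $\rsigma' \sim_b \rsigma$ with $d(\ralpha, \rsigma') = d(\ralpha, \rsigma) - 1$, which decomposes as $\rsigma' = \rsigma_1'\mid\cdots\mid\rsigma_k'$ with $\rsigma_i' \in [\ralpha_i]$ by the inductive hypothesis. The braid move carrying $\rsigma'$ to $\rsigma$ occurs at some $\llb j, j+2\rrb \in \bs(\rsigma')$, and the heart of the argument is to show this shadow is confined to a single block, so only that block changes and the concatenation structure persists. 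If $\llb j, j+2\rrb$ straddles exactly one boundary, then the block carrying two of its positions has length at least $3$ and thus admits an internal shadow at $\llb j-1, j+1\rrb$ or $\llb j+1, j+3\rrb$; via the easy inclusion this internal shadow also lies in $\bs([\ralpha])$, and its center is adjacent to the center of $\llb j, j+2\rrb$, directly violating Proposition~\ref{prop:hugh}.

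The main obstacle is the remaining possibility that $\llb j, j+2\rrb$ straddles two consecutive boundaries, meaning a length-1 block $\rsigma_m' = \ralpha_m$ sits between $\rsigma_{m-1}'$ and $\rsigma_{m+1}'$; the plan is to contradict the maximality clause of Definition~\ref{def:link factor}. Writing $\rsigma'_{\llb j, j+2\rrb} = sts$ exhibits $s$ as the last letter of some representative of $[\ralpha_{m-1}]$ and simultaneously as the first letter of some representative of $[\ralpha_{m+1}]$. Consequently, the standalone subword $\ralpha_{m-1}\cdot\ralpha_m\cdot\ralpha_{m+1}$ of $\ralpha$ admits in its braid class both the internal shadows of the two flanking links and the connecting straddle shadow at local positions $\llb |\ralpha_{m-1}|, |\ralpha_{m-1}|+2\rrb$, which together form exactly the shadow chain of a link of length $n := |\ralpha_{m-1}|+1+|\ralpha_{m+1}|$. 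Thus this subword is a link subword of $\ralpha$ strictly containing $\ralpha_m$, contradicting the maximality of $\ralpha_m$ as a link factor. Once the no-straddling claim is secured, the rank identity $\bs([\ralpha]) = \bigsqcup_i \bs([\ralpha_i])$ and the cardinality identity follow immediately.
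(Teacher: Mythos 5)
Your proof is correct, but there is nothing in the paper to match it against in detail: the authors simply assert that this proposition ``is a direct consequence of the definitions'' and give no argument. What your write-up does is supply the content that the paper leaves implicit, namely that a braid shadow of any element of $[\ralpha]$ can never straddle a boundary between consecutive link factors. Your two-case treatment is sound: when the shadow $\llb j,j+2\rrb$ meets exactly one boundary, the longer block (odd length, hence length at least $3$) contributes the extreme shadow $\llb j-1,j+1\rrb$ or $\llb j+1,j+3\rrb$ of its braid class, which lands in $\bs([\ralpha])$ by the easy inclusion and collides with Proposition~\ref{prop:hugh}; when it meets two boundaries, the middle factor is a singleton and the subword $\ralpha_{m-1}\ralpha_m\ralpha_{m+1}$ is a link, contradicting condition (c) of Definition~\ref{def:link factor} for $\ralpha_m$. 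Two small points deserve one extra sentence each. First, to conclude that $\rgamma=\ralpha_{m-1}\ralpha_m\ralpha_{m+1}$ is a link you need $\bs([\rgamma])$ to \emph{equal} the odd-centered chain, not merely contain it; the missing half is immediate, since any even-centered shadow $\llb p,p+2\rrb$ would overlap the chain shadow $\llb p-1,p+1\rrb\in\bs([\rgamma])$ and is excluded by Proposition~\ref{prop:hugh}. Second, the rank identity needs the no-straddling claim for \emph{every} shadow of \emph{every} element of $[\ralpha]$, not only for the shadow at which the next braid move in your induction is applied; your argument never uses that the move is actually performed, so it applies verbatim once the set equality is in hand, but you should say so explicitly before invoking the disjoint-union decomposition of $\bs([\ralpha])$.
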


Proposition~\ref{prop:class as products of factors} implies that the braid
graph for any reduced expression for a group element can be decomposed as the
box product of the braid graphs for the corresponding link factors in the link
factorization. Note that the decomposition is unique if one respects the
ordering of the link factors.

\begin{corollary}\label{cor:boxproductsofbraidgraphs}
Suppose $(W,S)$ is a simply-laced Coxeter system. If $\ralpha$ is a reduced
expression for $w\in W$ with link factorization
$\ralpha_1\mid\ralpha_2\mid\cdots\mid\ralpha_k$, then $B(\ralpha) \cong
B(\ralpha_1) \square B(\ralpha_2) \square \cdots \square B(\ralpha_k)$.
\end{corollary}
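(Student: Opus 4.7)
The plan is to exhibit an explicit graph isomorphism
\[
\phi\colon B(\ralpha_1)\,\square\, B(\ralpha_2) \,\square \cdots \square\, B(\ralpha_k) \longrightarrow B(\ralpha),
\qquad (\rbeta_1,\ldots,\rbeta_k)\longmapsto \rbeta_1\mid\rbeta_2\mid\cdots\mid\rbeta_k,
\]
and then check that it is a bijection on vertices and both preserves and reflects adjacency. The bijection on vertex sets is immediate from Proposition~\ref{prop:class as products of factors}, which identifies $[\ralpha]$ with the Cartesian product $[\ralpha_1]\times\cdots\times[\ralpha_k]$ of vertex sets underlying the box product. So the entire argument reduces to analyzing which pairs of vertices are joined by edges on each side.

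For the easy direction, suppose $(\rbeta_1,\ldots,\rbeta_k)$ and $(\rgamma_1,\ldots,\rgamma_k)$ are adjacent in the box product. By definition of the Cartesian graph product, they agree in every coordinate except one, say the $j$-th, and $\rbeta_j\sim_b\rgamma_j$. A single braid move transforming $\rbeta_j$ into $\rgamma_j$ takes place at some positions internal to $\ralpha_j$; applying exactly that move at the corresponding positions of the concatenation $\rbeta_1\mid\cdots\mid\rbeta_k$ leaves all other blocks untouched and produces $\rbeta_1\mid\cdots\mid\rgamma_j\mid\cdots\mid\rbeta_k$. Hence $\phi(\rbeta_1,\ldots,\rbeta_k)$ and $\phi(\rgamma_1,\ldots,\rgamma_k)$ are joined by an edge of $B(\ralpha)$.

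The converse is where the real content lies, and it is the main obstacle. The plan is to show that any braid shadow $\llb i,i+2\rrb \in \bs([\ralpha])$ is contained in the position range of a single link factor $\ralpha_j$. Granting that, a single braid move relating $\rbeta_1\mid\cdots\mid\rbeta_k$ to $\rgamma_1\mid\cdots\mid\rgamma_k$ alters letters inside exactly one block, whence $\rbeta_l=\rgamma_l$ for $l\ne j$ and $\rbeta_j\sim_b\rgamma_j$, giving adjacency in the box product. To establish the containment claim, I would argue by contradiction: suppose $\llb i,i+2\rrb$ straddles the boundary between consecutive link factors $\ralpha_j$ and $\ralpha_{j+1}$. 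If $\ralpha_j$ has length at least three, then by Definition~\ref{def:braidchain} its positions are covered by a chain of overlapping braid shadows in $\bs([\ralpha_j])\subseteq\bs([\ralpha])$; similarly for $\ralpha_{j+1}$ when it has length at least three. Proposition~\ref{prop:hugh} guarantees that adjacent shadows in $\bs([\ralpha])$ meet only in a single endpoint, so appending the straddling shadow $\llb i,i+2\rrb$ to these two chains produces one continuous chain of overlapping braid shadows spanning the combined positions of $\ralpha_j\mid\ralpha_{j+1}$. This exhibits $\ralpha_j\mid\ralpha_{j+1}$ as a link subword strictly containing $\ralpha_j$, contradicting the maximality clause of Definition~\ref{def:link factor}. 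The remaining cases in which $\ralpha_j$ or $\ralpha_{j+1}$ is a length-one link proceed identically: the straddling shadow itself (together with any chain on the other side) forces that singleton position into a longer link, again violating maximality. Thus every braid shadow of $\ralpha$ lies within a single link factor, $\phi$ reflects adjacency, and the isomorphism is established.
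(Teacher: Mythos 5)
Your map, the vertex bijection via Proposition~\ref{prop:class as products of factors}, and the easy direction all match the paper (whose entire proof is this factorwise map plus the one-line remark that braid moves on distinct factors act independently), and your instinct that the real content of the converse is ruling out braid moves that straddle a factor boundary is exactly right. The step that fails is your contradiction. First, $\ralpha_j\mid\ralpha_{j+1}$ can never be a link: link factors have odd lengths $2a+1$ and $2b+1$, so the concatenation has even length $2a+2b+2$, whereas Definition~\ref{def:braidchain} requires odd length; for the same parity reason no chain of braid shadows overlapping in single endpoints can span exactly the positions of $\ralpha_j\mid\ralpha_{j+1}$. Second, the ``continuous chain'' you assemble cannot exist: if the straddling window $\llb i,i+2\rrb$ covers the last two letters of $\ralpha_j$ (forcing $\rank(\ralpha_j)\geq 1$), then its starting position is one more than the start of the rightmost shadow contributed by $[\ralpha_j]$, and Proposition~\ref{prop:hugh} forbids two shadows of $\bs([\ralpha])$ with consecutive starting positions; symmetrically if it covers the first two letters of $\ralpha_{j+1}$. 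So Proposition~\ref{prop:hugh} is not a gluing device here --- in those cases it is already the contradiction, and your argument asserts a configuration that it excludes.

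The cases you say ``proceed identically,'' where the window contains a singleton factor, are exactly the ones with no Proposition~\ref{prop:hugh} conflict available, and they need a different argument. The quickest one: a braid move $sts\mapsto tst$ changes the letter in all three positions of the window, but by Proposition~\ref{prop:class as products of factors} every element of $[\ralpha]$ carries the unique letter of each rank-zero factor in that factor's position, so a singleton factor's position can never lie inside the moved window. (Alternatively one can contradict maximality in Definition~\ref{def:link factor}, but the word one must exhibit as a link is, e.g., $\ralpha_j\mid\ralpha_{j+1}\mid\ralpha_{j+2}$ with $\ralpha_{j+1}$ the straddled singleton --- an odd-length subword of $\ralpha$ --- and one must transfer the straddling shadow to the braid class of that subword using the blockwise description of $[\ralpha]$ and then exclude even-position shadows with Proposition~\ref{prop:hugh}; it is not the two-factor word you name.) With these repairs your outline does establish that adjacency is reflected, which is more detail than the paper itself supplies, but as written the key step does not go through.
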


\begin{proof}
An isomorphism of graphs is given by $\rbeta_1\mid \rbeta_2\mid \cdots\mid
\rbeta_k\mapsto (\rbeta_1,\rbeta_2,\ldots,\rbeta_k)$, where each $\rbeta_j\in
[\ralpha_j]$. This bijection between vertex sets respects the edges of the
corresponding graphs since braid moves on distinct link factors can be applied
independently.
\end{proof}

\begin{example}\label{ex:linkfactorization}
Consider the reduced expression $\rbeta_1= 1213243565$ defined in
Example~\ref{ex:braid graphs}. The link factorization for $\rbeta_1$ is
$\textcolor{turq}{1213243} \mid \textcolor{magenta}{565}$. The decomposition
$B(\rbeta_1) \cong B(\textcolor{turq}{1213243}) \square
B(\textcolor{magenta}{565})$ guaranteed by
Corollary~\ref{cor:boxproductsofbraidgraphs} is shown in
Figure~\ref{fig:boxproductofbraidgraphs}. Note that we have utilized colors to
help distinguish the link factors.
\end{example}

\begin{figure}[t!]
\centering
\begin{tikzpicture}[every circle node/.style={draw, circle, inner sep=1.25pt}]
\node [circle] (node_1) at (0,0)  {};
\node [below left,rotate=45] (label_1) at (0,0) {$\scriptstyle
\textcolor{turq}{2123243}\,\mid\,\textcolor{magenta}{656}$};
\node [circle] (node_2) at (1,0) {};
\node [below left,rotate=45] (label_2) at (1,0) {$\scriptstyle
\textcolor{turq}{2132343}\,\mid\,\textcolor{magenta}{656}$};
\node [circle]  (node_3) at (-1,0) {};
\node [below left,rotate=45] (label_3) at (-1,0) {$\scriptstyle
\textcolor{turq}{1213243}\,\mid\,\textcolor{magenta}{656}$};
\node [circle]  (node_4) at (0,1) {};
\node [above right,rotate=45] (label_4) at (0,1) {$\scriptstyle
\textcolor{turq}{2123243}\,\mid\,\textcolor{magenta}{565}$};
\node [circle]  (node_5) at (1,1) {};
\node [above right,rotate=45] (label_5) at (1,1) {$\scriptstyle
\textcolor{turq}{2132343}\,\mid\,\textcolor{magenta}{565}$};
\node [circle]  (node_6) at (-1,1) {};
\node [above right,rotate=45] (label_6) at (-1,1) {$\scriptstyle
\textcolor{turq}{1213243}\,\mid\,\textcolor{magenta}{565}$};
\node [circle]  (node_7) at (2,0) {};
\node [below left,rotate=45] (label_8) at (2,0) {$\scriptstyle
\textcolor{turq}{2132434}\,\mid\,\textcolor{magenta}{656}$};
\node [circle]  (node_8) at (2,1) {};
\node [above right,rotate=45] (label_8) at (2,1) {$\scriptstyle
\textcolor{turq}{2132434}\,\mid\,\textcolor{magenta}{565}$};
\draw [turq,-, very thick] (node_1) to (node_3);
\draw [turq,-, very thick] (node_1) to (node_2);
\draw [magenta,-, very thick] (node_1) to (node_4);
\draw [magenta,-, very thick] (node_3) to (node_6);
\draw [magenta,-, very thick] (node_2) to (node_5);
\draw [turq,-, very thick] (node_4) to (node_5);

\draw [turq,-, very thick] (node_4) to (node_6);
\draw [turq,-, very thick] (node_2) to (node_7);
\draw [turq,-, very thick] (node_5) to (node_8);
\draw [magenta,-, very thick] (node_7) to (node_8);

\node (e) at (2.75,.5) {$\cong$};

\node [circle] (1) at (3.5,2){};
\node [right] (l1) at (3.5,2){$\scriptstyle \textcolor{turq}{2132434}$};
\node [right] (l2) at (3.5,1){$\scriptstyle \textcolor{turq}{2132343}$};
\node [circle] (2) at (3.5,1){};
\node [circle] (3) at (3.5,0){};
\node [right] (l3) at (3.5,0){$\scriptstyle \textcolor{turq}{2123243}$};
\node [circle] (4) at (3.5,-1){};
\node [right] (l4) at (3.5,-1){$\scriptstyle \textcolor{turq}{1213243}$};

\draw [-, very thick, turq] (1) to (2);
\draw [-, very thick, turq] (2) to (3);
\draw [-, very thick, turq] (3) to (4);

\node (x) at (5,.5) {$\Box$};

\node [circle] (5) at (5.5,1){};
\node [circle] (6) at (5.5,0){};
\node [right] (l5) at (5.5,1){$\scriptstyle \textcolor{magenta}{565}$};
\node [right] (l6) at (5.5,0){$\scriptstyle \textcolor{magenta}{656}$};
\draw [-, very thick, magenta] (5) to (6);

\end{tikzpicture}
\caption{Braid graph for the reduced expression from
Example~\ref{ex:linkfactorization} and its decomposition into a box product of
braid graphs for the corresponding link
factors.}\label{fig:boxproductofbraidgraphs}
\end{figure}
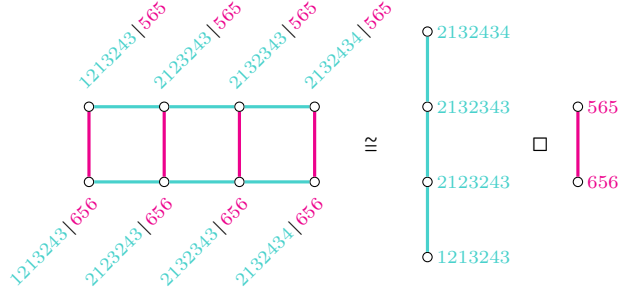

\begin{example}\label{ex:productoflollipops}
Consider the Coxeter system of type $D_7$ determined by the Coxeter graph in
Figure~\ref{fig:labeledB}. The reduced expression $3231343567543231343$ has
link factorization
\[
\textcolor{turq}{3231343}\mid 5\mid 6\mid 7\mid 5\mid 4\mid
\textcolor{magenta}{3231343}.
\]
The braid graphs for the first and last link factors are isomorphic to the
braid graph in Figure~\ref{fig:braidgraphc}. The braid graph for each singleton
factor consists of a single vertex. The braid graph for the entire reduced
expression and its decomposition are shown in
Figure~\ref{fig:productoflollipops}.
\end{example}

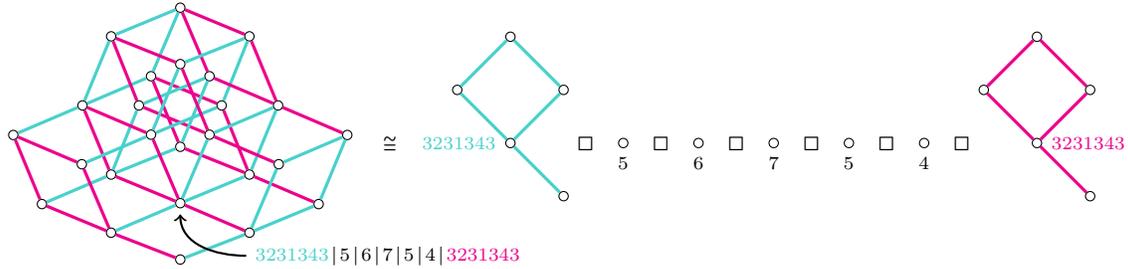
\begin{figure}[ht!]
\begin{tikzpicture}[every circle node/.style={draw, circle, inner sep=1.25pt}]
\node [circle] (1) at (1.3,0){};
\node [circle] (2) at ({1.3*cos(45)},{1.3*sin(45)}){};
\node [circle] (3) at (0,1.3){};
\node [circle] (4) at ({1.3*cos(135)},{1.3*sin(135)}){};
\node [circle] (5) at (-1.3,0){};
\node [circle] (6) at ({1.3*cos(225)},{1.3*sin(225)}){};
\node [circle] (7) at (0,-1.3){};
\node [circle] (8) at ({1.3*cos(315)},{1.3*sin(315)}){};
\draw [magenta,-, very thick] (1) to (2);
\draw [magenta,-, very thick] (2) to (3);
\draw [turq,-, very thick] (3) to (4);
\draw [turq,-, very thick] (4) to (5);
\draw [magenta,-, very thick] (5) to (6);
\draw [magenta,-, very thick] (6) to (7);
\draw [turq,-, very thick] (7) to (8);
\draw [turq,-, very thick] (8) to (1);

\node [circle] (9) at (.55,0){};
\node [circle] (10) at ({.55*cos(45)},{.55*sin(45)}){};
\node [circle] (11) at (0,.55){};
\node [circle] (12) at ({.55*cos(135)},{.55*sin(135)}){};
\node [circle] (13) at (-.55,0){};
\node [circle] (14) at ({.55*cos(225)},{.55*sin(225)}){};
\node [circle] (15) at (0,-.55){};
\node [circle] (16) at ({.55*cos(315)},{.55*sin(315)}){};

\draw [magenta,-, very thick] (8) to (9);

\draw [turq,-, very thick] (1) to (16);
\draw [turq,-, very thick] (2) to (9);
\draw [magenta,-, very thick] (3) to (10);

\draw [turq,-, very thick] (5) to (12);
\draw [turq,-, very thick] (6) to (13);

\draw [magenta,-, very thick] (8) to (15);
\draw [magenta,-, very thick] (1) to (10);
\draw [turq,-, very thick] (2) to (11);
\draw [turq,-, very thick] (3) to (12);
\draw [magenta,-, very thick] (4) to (13);
\draw [magenta,-, very thick] (5) to (14);
\draw [turq,-, very thick] (6) to (15);
\draw [turq,-, very thick] (7) to (16);

\draw [magenta,-, very thick] (9) to (12);
\draw [turq,-, very thick] (10) to (13);

\draw [magenta,-, very thick] (12) to (15);
\draw [magenta,-, very thick] (13) to (16);
\draw [turq,-, very thick] (14) to (9);
\draw [turq,-, very thick] (15) to (10);
\draw [magenta,-, very thick] (16) to (11);

\node [circle] (17) at (1.3+.92,-.393){};
\node [circle] (18) at (0+.92,-1.3-.393){};
\node [circle] (19) at ({1.3*cos(315)+.92},{1.3*sin(315)-.393}){};
\node [circle] (20) at ({.55*cos(315)+.92},{.55*sin(315)-.393}){};

\node [circle] (21) at (-1.3-.92,0-.393){};
\node [circle] (22) at ({1.3*cos(225)-.92},{1.3*sin(225)-.393}){};
\node [circle] (23) at (0-.92,-1.3-.393){};
\node [circle] (24) at ({.55*cos(225)-.92},{.55*sin(225)-.393}){};

\draw [turq,-, very thick] (22) to (6);
\draw [turq,-, very thick] (21) to (5);
\draw [turq,-, very thick] (23) to (7);
\draw [turq,-, very thick] (24) to (14);

\draw [magenta,-, very thick] (19) to (8);
\draw [magenta,-, very thick] (17) to (1);
\draw [magenta,-, very thick] (18) to (7);
\draw [magenta,-, very thick] (20) to (16);

\draw [turq,-, very thick] (17) to (19);
\draw [turq,-, very thick] (18) to (19);
\draw [turq,-, very thick] (17) to (20);
\draw [turq,-, very thick] (20) to (18);

\draw [magenta,-, very thick] (21) to (22);
\draw [magenta,-, very thick] (22) to (23);
\draw [magenta,-, very thick] (23) to (24);
\draw [magenta,-, very thick] (24) to (21);

\draw [magenta,-, very thick] (7) to (14);

\draw [turq,-, very thick] (11) to (14);
\draw [magenta,-, very thick] (4) to (11);

\node [circle] (25) at (0,-2.05){};
\draw [turq,-, very thick] (18) to (25);
\draw [magenta,-, very thick] (23) to (25);

\node (a) at (1.3+1.089+.4,-.504) {$\cong$};

\node [circle] (28) [label=left: $\scriptstyle \textcolor{turq}{3231343}$] at
(1.3+1.089+2,1-1.5){};
\node [circle] (29) at (.707+1.3+1.089+2,.293-1.5){};
\node [circle] (30) at (-.707+1.3+1.089+2,1.707-1.5){};
\node [circle] (31) at (.707+1.3+1.089+2,1.707-1.5){};
\node [circle] (32) at (0+1.3+1.089+2,2.414-1.5){};
\draw [turq,-, very thick] (30) to (28);
\draw [turq,-, very thick] (28) to (29);
\draw [turq,-, very thick] (28) to (31);
\draw [turq,-, very thick] (30) to (32);
\draw [turq,-, very thick] (32) to (31);

\node (b) at (1.3+1.089+3,-.5) {$\Box$};

\node [circle] (26) [label=below:$\scriptstyle 5$] at (1.3+1.089+3.5,-.5){};

\node (c) at (1.3+1.089+4,-.5) {$\Box$};

\node [circle] (27) [label=below:$\scriptstyle 6$] at (1.3+1.089+4.5,-.5){};

\node (d)  at (1.3+1.089+5,-.5) {$\Box$};

\node [circle] (28) [label=below:$\scriptstyle 7$]  at (1.3+1.089+5.5,-.5){};

\node (e)  at (1.3+1.089+6,-.5) {$\Box$};

\node [circle] (29) [label=below:$\scriptstyle 5$] at (1.3+1.089+6.5,-.5){};

\node (f) at (1.3+1.089+7,-.5) {$\Box$};

\node [circle] (30) [label=below:$\scriptstyle 4$] at (1.3+1.089+7.5,-.5){};

\node (g) at (1.3+1.089+8,-.5) {$\Box$};

\node [circle] (33) [label=right: $\scriptstyle \textcolor{magenta}{3231343}$]
at (0+1.3+1.089+9,1-1.5){};
\node [circle] (34) at (.707+1.3+1.089+9,.293-1.5){};
\node [circle] (35) at (-.707+1.3+1.089+9,1.707-1.5){};
\node [circle] (36) at (.707+1.3+1.089+9,1.707-1.5){};
\node [circle] (37) at (0+1.3+1.089+9,2.414-1.5){};
\draw [magenta,-, very thick] (33) to (34);
\draw [magenta,-, very thick] (33) to (35);
\draw [magenta,-, very thick] (33) to (36);
\draw [magenta,-, very thick] (35) to (37);
\draw [magenta,-, very thick] (37) to (36);
\node (h) [label = right: $\scriptstyle \textcolor{turq}{3231343} \, \mid \, 5
\, \mid \, 6 \, \mid \, 7 \, \mid \, 5 \, \mid \, 4 \, \mid \,
\textcolor{magenta}{3231343} $] at (.75,-2){};
\node (y) at (0,-1.325){};
\node (x) at (1,-2){};
\draw [->, thick] (x) to [out=180,in=270] (y);
\end{tikzpicture}
\caption{Braid graph for the reduced expression from
Example~\ref{ex:productoflollipops} and its decomposition into a box product of
braid graphs for the corresponding link factors.}\label{fig:productoflollipops}
\end{figure}

According to the next proposition, the support of a braid shadow is constant
across an entire braid class in simply-laced triangle-free Coxeter systems.

\begin{proposition}\label{prop:equalsupport}
Suppose $(W,S)$ is a simply-laced triangle-free Coxeter system. If $\ralpha$
and $\rbeta$ are two braid equivalent reduced expressions for $w\in W$ with
$\ell(w)\geq 3$, then for all $\llb i,i+2\rrb \in \bs(\ralpha)\cap
\bs(\rbeta)$, $\supp_{\llb i,i+2\rrb}(\ralpha) = \supp_{\llb
i,i+2\rrb}(\rbeta)$.
\end{proposition}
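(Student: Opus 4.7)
The plan is to mimic the minimal-counterexample argument used in Proposition~\ref{prop:hugh}. Suppose the conclusion fails and choose a counterexample $(\ralpha, \rbeta)$ with $d := d(\ralpha, \rbeta)$ minimal; the $d=0$ case being trivial, we have $d \geq 1$. Fix a shortest sequence $\ralpha = \ralpha_0, \ralpha_1, \ldots, \ralpha_d = \rbeta$ whose $k$-th braid move is $b_k$, and write $\ralpha_{\llb i, i+2\rrb} = sts$ with $m(s,t) = 3$.

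First, I dispatch the easy cases for $b_1$. If $b_1$ is disjoint from $\llb i, i+2\rrb$ or equals it, then $\llb i, i+2\rrb \in \bs(\ralpha_1)$ with $\supp_{\llb i, i+2\rrb}(\ralpha_1) = \supp_{\llb i, i+2\rrb}(\ralpha)$, and minimality applied to the shorter pair $(\ralpha_1, \rbeta)$ at distance $d-1$ transfers the support to $\rbeta$, a contradiction. If $b_1$ acts on $\llb i-1, i+1\rrb$ or $\llb i+1, i+3\rrb$, we violate Proposition~\ref{prop:hugh} since $\llb i, i+2\rrb \in \bs([\ralpha])$. Up to left-right symmetry I may thus assume $b_1$ acts on $\llb i-2, i\rrb$, so that $\ralpha_{\llb i-2, i+2\rrb} = srsts$ with $m(s,r) = m(s,t) = 3$; the triangle-free hypothesis then forces $m(r,t) = 2$. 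I next claim that no $b_k$ in the path acts on $\llb i, i+2\rrb$: if some $b_k$ did, then $\llb i, i+2\rrb$ would be a shadow of both $\ralpha_{k-1}$ and $\ralpha_k$, and applying minimality to the shorter pairs $(\ralpha, \ralpha_{k-1})$ and $(\ralpha_k, \rbeta)$ would chain the supports together (using that a braid move at $\llb i, i+2\rrb$ preserves its local support), contradicting the counterexample. Combined with Proposition~\ref{prop:hugh} ruling out moves at $\llb i-3, i-1\rrb$, $\llb i-1, i+1\rrb$, and $\llb i+1, i+3\rrb$ (since both $\llb i-2, i\rrb$ and $\llb i, i+2\rrb$ lie in $\bs([\ralpha])$), this shows that along the path the letter at position $i+1$ is never altered, while the letters at positions $i-1$ and $i$ are affected only by moves at $\llb i-2, i\rrb$, each of which simply swaps them.

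Two consequences follow: the letter at position $i+1$ in $\rbeta$ equals $t$, and the multiset of letters at positions $i-1$ and $i$ in any $\ralpha_k$ is constantly $\{r, s\}$. Since $\llb i, i+2\rrb \in \bs(\rbeta)$, we may write $\rbeta_{\llb i, i+2\rrb} = utu$ with $m(u,t) = 3$, and the letter $u$ at position $i$ of $\rbeta$ lies in $\{r, s\}$. The option $u = r$ would force $m(r,t) = 3$, contradicting $m(r,t) = 2$, so $u = s$ and $\supp_{\llb i, i+2\rrb}(\rbeta) = \{s, t\}$, which contradicts the choice of counterexample. The mirror case where $b_1$ acts on $\llb i+2, i+4\rrb$ is handled symmetrically using the invariant multiset of letters at positions $i+2$ and $i+3$. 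I expect the main obstacle to be the intermediate claim that no move in the path acts on $\llb i, i+2\rrb$; once that is established, Proposition~\ref{prop:hugh} and the triangle-free hypothesis combine to pin down $u = s$ cleanly.
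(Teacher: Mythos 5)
Your proof is correct and follows essentially the same strategy as the paper's: a minimal counterexample with respect to braid distance, Proposition~\ref{prop:hugh} to rule out moves at the intervals overlapping $\llb i,i+2\rrb$ by one position (so that $t$ stays fixed at position $i+1$), and triangle-freeness to eliminate the unwanted letter at position $i$. The differences are only in the bookkeeping at the end — the paper applies minimality a second time at the adjacent shadow to force a forbidden three-cycle, whereas you track the invariant multiset $\{r,s\}$ at positions $i-1,i$ and conclude the supports agree directly — and the only thing to add is the one-line observation that $r\neq t$ (otherwise $srsts$ would not be reduced) before asserting $m(r,t)=2$.
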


\begin{proof}
Let $\ralpha$ and $\rbeta$ be two braid equivalent reduced expressions for
$w\in W$ with $\ell(w)=m\geq 3$. For each $i\in \{1,\ldots, m-2\}$, define
\[
P_i:=\big\{(\rgamma,\rdelta)  \mid \rgamma,\rdelta \in [\ralpha], \llb
i,i+2\rrb\in \bs(\rgamma)\cap \bs(\rdelta), \ \text{and} \ \supp_{\llb
i,i+2\rrb}(\rgamma) \neq \supp_{\llb i,i+2\rrb}(\rdelta)\big\},
\]
and let
\[
P:=\bigcup_{i=1}^{m-2}P_i.
\]
We will prove that $P = \emptyset$. Suppose otherwise and choose
$(\rgamma,\rdelta)\in P$ such that $d(\rgamma,\rdelta)$ is minimal among all
elements of $P$. Then there exists $i\in\{1,\ldots, m-2\}$ such that
$(\rgamma,\rdelta)\in P_i$, so that $\llb i,i+2\rrb\in \bs(\rgamma)\cap
\bs(\rdelta)$ while $\supp_{\llb i,i+2\rrb}(\rgamma) \neq \supp_{\llb
i,i+2\rrb}(\rdelta)$.
Let $\rgamma_{\llb i,i+2\rrb}=sts$ and $\rdelta_{\llb i,i+2\rrb}=uvu$, where
$m(s,t)=3=m(u,v)$. Suppose $d(\rgamma,\rdelta) = k$ and let
\[
\ralpha_0:=\rgamma, \ralpha_1, \ldots, \ralpha_{k-1}, \ralpha_k:=\rdelta
\]
be a minimal sequence of braid equivalent reduced expressions that transforms
$\rgamma$ into $\rdelta$ in $k$ braid moves such that
$d(\ralpha_{j-1},\ralpha_{j})=1$ for $1\leq j\leq k$. It is clear that $k\geq
2$. Let $b_j$ denote the braid move that transforms $\ralpha_{j-1}$ into
$\ralpha_j$. As in the proof of Proposition~\ref{prop:hugh}, we represent this
sequence of moves visually as follows:
\[
\underbrace{\subword{s}{i+1}{t}{i+2}{s}}_{\ralpha_0} 
\  \overset{b_1}{\longmapsto}\cdots \overset{b_k}{\longmapsto}  \ 
\underbrace{\subword{u}{i+1}{v}{i+2}{u}}_{\ralpha_k}
\]
Since $\llb i,i+2\rrb\in\bs(\ralpha_0)$, the intervals $\llb i-1,i+1\rrb$ and
$\llb i+1,i+3\rrb$ are not braid shadows for every reduced expression in the
sequence $\ralpha_0, \ralpha_1, \ldots, \ralpha_{k-1}, \ralpha_k$ by
Proposition~\ref{prop:hugh}. By the minimality of $k$, the interval $\llb
i,i+2\rrb$ only appears as a braid shadow in $\ralpha_0$ and $\ralpha_k$. That
is, $\llb i,i+2\rrb\notin \bs(\ralpha_l)$ for all $1\leq l\leq k-1$.  Together
these facts imply that $\supp_{\llb i+1\rrb}(\ralpha_l) = \{t\}$ for all $0\leq
l\leq k$. In other words, $t$ is fixed in position $i+1$ throughout the entire
sequence $\ralpha_0, \ralpha_1, \ldots, \ralpha_{k-1}, \ralpha_k$. This forces
$v=t$, which in turn implies that $m(u,t) = 3$. Again by the minimality of $k$,
it must be the case that $b_1$ acts on either $\llb i-2,i\rrb$ or $\llb i+2,i+4
\rrb$. Without loss of generality, assume that $b_1$ acts on $\llb i+2,i+4
\rrb$. Then there exists $x \in S$ with $m(s,x) = 3$ such that $\supp_{\llb
i+3\rrb}(\ralpha_0) = \{x\}$ and $\supp_{\llb i+4\rrb}(\ralpha_0) = \{s\}$. In
summary, we have
\[
\underbrace{\subword{s}{i+1}{t}{i+2}{s}{i+3}{x}{i+4}{s}}_{\ralpha_{0}}
\ \overset{b_1}{\longmapsto} \
\underbrace{\subword{s}{i+1}{t}{i+2}{x}{i+3}{s}{i+4}{x}}_{\ralpha_1}
\ \overset{b_{2}}{\longmapsto} \ \cdots \ \overset{b_k}{\longmapsto} \
\underbrace{\subword{u}{i+1}{t}{i+2}{u}{i+3}{?}{i+4}{?}}_{\ralpha_k}
\] 
Towards a contradiction, suppose $x\neq u$. In order to exchange $x$ with $u$
in position $i+2$, there must exist a reduced expression $\ralpha_j$ with $2
\leq j \leq k$ such that $\llb i+2,i+4\rrb \in \bs(\ralpha_j)$ and $\supp_{\llb
i+2,i+4\rrb}(\ralpha_j) \neq \{x,s\}$. Yet if $\supp_{\llb
i+2,i+4\rrb}(\ralpha_j) \neq \{x,s\}$, then $(\ralpha_1,\ralpha_j) \in P_{i+2}$
while $d(\ralpha_1,\ralpha_j) = j-1 < k$, a contradiction. Hence $x=u$. This
implies that $m(s,u) = 3$, and hence $m(s,u) = m(u,t) = m(t,s)=3$. But this is
contrary to the fact that $(W,S)$ is triangle free. We conclude that
$P=\emptyset$, which proves the claim.
\end{proof} 

As the next example illustrates, the previous result is false without the
assumption that the Coxeter system is triangle free.

\begin{example}\label{ex:threecycle}
Consider the Coxeter system of type $\widetilde{A}_2$, which is determined by
the Coxeter graph in Figure~\ref{fig:labeledC}. The expression $\ralpha =
1213121$ is a link, and it is easy to see that $\rbeta = 2123212 \in
[\ralpha]$. However, $\supp_{\llb 3,5\rrb}(\ralpha) = \{1,3\}$ while
$\supp_{\llb 3,5\rrb}(\rbeta) = \{2,3\}$. This shows that
Proposition~\ref{prop:equalsupport} is false when the Coxeter graph has a
three-cycle.
\end{example}

If one generalizes the notions of braid shadow and link in the natural way, we
conjecture that a result analogous to Proposition~\ref{prop:equalsupport} holds
in arbitrary Coxeter systems as long as the corresponding Coxeter graph does
not contain a three-cycle with edge weights $3, 3, m$, where $m\geq 3$.

When a reduced expression has a braid shadow, the collection of generators that
may appear at the center of the braid shadow in any braid equivalent reduced
expression is completely determined by the support of that braid shadow. The
following proposition makes this more precise.

\begin{proposition}\label{prop:core}
Suppose $(W,S)$ is a simply-laced triangle-free Coxeter system and let
$\ralpha$ be a link of rank at least one. Then $\llb 2i-1,2i+1\rrb\in
\bs(\ralpha)$ if and only if $\supp_{\llb 2i-1,2i+1\rrb}(\ralpha) =\supp_{\llb
2i\rrb}([\ralpha])$.
\end{proposition}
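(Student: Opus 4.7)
The plan is to prove both directions by combining the restrictive structure of braid shadows in a link with Proposition~\ref{prop:equalsupport}. The crucial structural observation about a link is that every braid shadow in $[\ralpha]$ has the form $\llb 2j-1,2j+1\rrb$, so braid shadows have odd endpoints and their centers lie at even positions. In particular, the only braid shadow of any expression in $[\ralpha]$ whose three positions include the even index $2i$ is $\llb 2i-1,2i+1\rrb$ itself.

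For the forward direction, suppose $\llb 2i-1,2i+1\rrb\in\bs(\ralpha)$ and write $\ralpha_{\llb 2i-1,2i+1\rrb}=sts$ with $m(s,t)=3$, so $\supp_{\llb 2i-1,2i+1\rrb}(\ralpha)=\{s,t\}$. The containment $\{s,t\}\subseteq\supp_{\llb 2i\rrb}([\ralpha])$ is immediate: position $2i$ of $\ralpha$ is $t$, and applying the braid move at $\llb 2i-1,2i+1\rrb$ produces an expression in $[\ralpha]$ with $s$ at position $2i$. For the reverse containment I would argue by induction on $d(\ralpha,\rbeta)$ that every $\rbeta\in[\ralpha]$ has its letter at position $2i$ in $\{s,t\}$. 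Any single braid move producing $\rbeta'$ from $\rbeta$ either acts on a shadow disjoint from position $2i$ (in which case position $2i$ is unchanged) or acts on $\llb 2i-1,2i+1\rrb$ itself; in the latter case Proposition~\ref{prop:equalsupport} forces $\supp_{\llb 2i-1,2i+1\rrb}(\rbeta)=\{s,t\}$, so the move simply swaps $sts\leftrightarrow tst$ and the position-$2i$ letter stays in $\{s,t\}$.

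For the backward direction, assume $\supp_{\llb 2i-1,2i+1\rrb}(\ralpha)=\supp_{\llb 2i\rrb}([\ralpha])$. Since $\ralpha$ is a link of rank at least one, Definition~\ref{def:braidchain} gives $\llb 2i-1,2i+1\rrb\in\bs([\ralpha])$, so some $\rbeta\in[\ralpha]$ satisfies $\rbeta_{\llb 2i-1,2i+1\rrb}=sts$ with $m(s,t)=3$. Applying the already-proved forward direction to $\rbeta$ yields $\supp_{\llb 2i\rrb}([\ralpha])=\supp_{\llb 2i\rrb}([\rbeta])=\{s,t\}$, and the hypothesis then forces $\supp_{\llb 2i-1,2i+1\rrb}(\ralpha)=\{s,t\}$. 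Because $\ralpha$ is reduced, consecutive generators differ, and since each of the three positions lies in $\{s,t\}$, positions $2i-1,2i,2i+1$ of $\ralpha$ must alternate between $s$ and $t$; together with $m(s,t)=3$ this gives $\llb 2i-1,2i+1\rrb\in\bs(\ralpha)$.

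The main obstacle is the inductive step in the forward direction, which rests on two intertwined ingredients: the odd-endpoint structure of braid shadows in a link (ensuring exactly one shadow can alter position $2i$) and Proposition~\ref{prop:equalsupport} (which pins down the support of that shadow across the braid class). Without the triangle-free hypothesis, Proposition~\ref{prop:equalsupport} fails, as shown by Example~\ref{ex:threecycle}, so the constraint governing what a braid move at $\llb 2i-1,2i+1\rrb$ can do breaks down and the conclusion can fail.
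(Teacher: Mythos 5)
Your proof is correct and follows essentially the same route as the paper: both directions hinge on the fact that in a link the only braid shadow meeting the even position $2i$ is $\llb 2i-1,2i+1\rrb$, combined with Proposition~\ref{prop:equalsupport} to fix that shadow's support across the class, and the converse is obtained by applying the forward implication to a representative $\rbeta\in[\ralpha]$ exhibiting the shadow. Your phrasing of the forward direction as an induction on braid distance is only a bookkeeping variant of the paper's argument, which instead locates an intermediate expression in a sequence of braid moves.
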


\begin{proof}
To prove the forward implication, let $\llb 2i-1,2i+1\rrb\in \bs(\ralpha)$ and
assume that $\supp_{\llb 2i-1,2i+1\rrb}(\ralpha) = \{s,t\}$. It is clear that
$\{s,t\}\subseteq \supp_{\llb 2i\rrb}([\ralpha])$. We may assume without loss
of generality that $\supp_{\llb 2i\rrb}(\ralpha) = \{s\}$. Let $u\in
\supp_{\llb 2i\rrb}([\ralpha])$ and choose $\rbeta\in [\ralpha]$ such that
$\supp_{\llb 2i\rrb}(\rbeta) = \{u\}$. Then $\ralpha$ and $\rbeta$ are related
by a sequence of braid moves. If no braid move involves position $2i$, then
$s=u$. Otherwise, the sequence has a braid move that involves position $2i$.
However, by Proposition~\ref{prop:hugh}, $\llb 2i-2,2i\rrb, \llb
2i,2i+2\rrb\notin \bs([\ralpha])$. Hence there exists $\rgamma\in[\ralpha]$
such that $\llb 2i-1,2i+1\rrb\in \bs(\rgamma)$ and $u \in \supp_{\llb
2i-1,2i+1\rrb}(\rgamma)$. By Proposition~\ref{prop:equalsupport}, $\supp_{\llb
2i-1,2i+1\rrb}(\rgamma) = \supp_{\llb 2i-1,2i+1\rrb}(\ralpha)$. This shows that
$u \in\{s,t\}$, and so $\supp_{\llb 2i\rrb}([\ralpha])=\{s,t\}$, as desired.

To prove the converse, assume $\supp_{\llb 2i-1,2i+1\rrb}(\ralpha) =\supp_{\llb
2i\rrb}([\ralpha])$. Since $\ralpha$ is a link, we know $\llb 2i-1,2i+1\rrb\in
\bs([\ralpha])$, and hence we can choose $\rbeta \in [\ralpha]$ such that $\llb
2i-1,2i+1\rrb\in \bs(\rbeta)$. But now we can apply the forward implication to
$\rbeta$ to conclude that
\[
\supp_{\llb 2i-1,2i+1\rrb}(\rbeta) =\supp_{\llb 2i\rrb}([\rbeta])=\supp_{\llb
2i\rrb}([\ralpha])=\supp_{\llb 2i-1,2i+1\rrb}(\ralpha).
\]
This implies that $\llb 2i-1,2i+1 \rrb \in \bs(\ralpha)$.
\end{proof}

Applying the previous proposition to a pair of overlapping braid shadows yields
the following corollary, which says that the supports of overlapping braid
shadows intersect at a single element.

\begin{corollary}\label{cor:singletonintersection}
Suppose $(W,S)$ is a simply-laced triangle-free Coxeter system. If $\ralpha$ is
a link of rank at least two, then $\card(\supp_{\llb
2i\rrb}([\ralpha])\cap\supp_{\llb 2i+2\rrb}([\ralpha])) = 1$.
\end{corollary}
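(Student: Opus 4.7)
My plan is to apply Proposition~\ref{prop:core} to the two overlapping braid shadows and then argue their (two-element) supports are neither disjoint nor equal. Since $\ralpha$ has rank at least two, the braid shadows $\llb 2i-1,2i+1\rrb$ and $\llb 2i+1,2i+3\rrb$ both lie in $\bs([\ralpha])$, and I can pick representatives $\rbeta,\rgamma\in[\ralpha]$ with $\rbeta_{\llb 2i-1,2i+1\rrb}=sts$ and $\rgamma_{\llb 2i+1,2i+3\rrb}=uvu$, where $m(s,t)=m(u,v)=3$. Proposition~\ref{prop:core} then identifies $\supp_{\llb 2i\rrb}([\ralpha])=\{s,t\}$ and $\supp_{\llb 2i+2\rrb}([\ralpha])=\{u,v\}$, so the corollary reduces to showing $\card(\{s,t\}\cap\{u,v\})=1$.

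For the nonemptiness half, I would track the letter at position $2i+1$ along any sequence of braid moves from $\rbeta$ to $\rgamma$. By Proposition~\ref{prop:hugh}, the interval $\llb 2i,2i+2\rrb$ is not a braid shadow of $[\ralpha]$, so the only braid moves that can alter position $2i+1$ are those at $\llb 2i-1,2i+1\rrb$ or $\llb 2i+1,2i+3\rrb$. Proposition~\ref{prop:equalsupport} pins the supports of these two shadows to $\{s,t\}$ and $\{u,v\}$ wherever they appear. Consequently, a braid move at $\llb 2i-1,2i+1\rrb$ always keeps the letter at position $2i+1$ inside $\{s,t\}$, and a braid move at $\llb 2i+1,2i+3\rrb$ can only be applied when that letter already lies in $\{u,v\}$ and leaves it in $\{u,v\}$. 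If $\{s,t\}\cap\{u,v\}$ were empty, no legal move could migrate position $2i+1$ from $\{s,t\}$ to $\{u,v\}$, contradicting the fact that it equals $s$ in $\rbeta$ and $u$ in $\rgamma$.

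For the at-most-one half, I assume toward a contradiction that $\{s,t\}=\{u,v\}$. Then $\rgamma_{\llb 2i+1,2i+3\rrb}$ is $sts$ or $tst$, and Proposition~\ref{prop:core} gives $\rgamma_{\llb 2i\rrb}\in\{s,t\}$. A short two-case check finishes: either $\rgamma_{\llb 2i\rrb}$ equals $\rgamma_{\llb 2i+1\rrb}$, contradicting reducedness, or it completes a braid pattern $sts$ or $tst$ across positions $\llb 2i,2i+2\rrb$, forcing $\llb 2i,2i+2\rrb\in\bs(\rgamma)\subseteq\bs([\ralpha])$ and contradicting Proposition~\ref{prop:hugh}. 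Combining the two halves gives $\card(\{s,t\}\cap\{u,v\})=1$. The main obstacle is the nonemptiness step: it demands recognizing that the triangle-free, simply-laced hypothesis, funnelled through Propositions~\ref{prop:hugh} and~\ref{prop:equalsupport}, yields a clean invariant preventing position $2i+1$ from jumping between disjoint braid-shadow supports. The upper-bound step is then a brief reducedness-plus-Proposition~\ref{prop:hugh} calculation.
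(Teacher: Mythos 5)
Your proof is correct, and it follows the route the paper intends but does not write out: the corollary is stated without proof as an application of Proposition~\ref{prop:core} to the two overlapping braid shadows, which is exactly how you identify $\supp_{\llb 2i\rrb}([\ralpha])=\{s,t\}$ and $\supp_{\llb 2i+2\rrb}([\ralpha])=\{u,v\}$, and your ``at most one'' step (reducedness plus Proposition~\ref{prop:hugh} ruling out a shadow at $\llb 2i,2i+2\rrb$) is the routine half. The nonemptiness half is the part the paper glosses over, and your invariant argument -- only moves at $\llb 2i-1,2i+1\rrb$ or $\llb 2i+1,2i+3\rrb$ can touch position $2i+1$, with their supports pinned by Propositions~\ref{prop:equalsupport} and~\ref{prop:core}, so that letter could never migrate from $\{s,t\}$ to a disjoint $\{u,v\}$ -- is sound and is the right tool here: the tempting shortcut of choosing one representative carrying both shadows (Lemma~\ref{lem:structure of overlapping braids}) is unavailable without circularity, since its proof chain already relies on this corollary via Remark~\ref{rem:support for even positions}.
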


If $\ralpha$ is a link while $(W,S)$ is not triangle free, then
Proposition~\ref{prop:core} and Corollary~\ref{cor:singletonintersection} are
false, as illustrated by the next example.

\begin{example}\label{ex:threecycle2}
Consider the link $\ralpha$ from Example~\ref{ex:threecycle}. We have
$\supp_{\llb 3,5\rrb}(\ralpha) = \{1,3\}$ and $\supp_{\llb 4\rrb}([\ralpha]) =
\{1,2,3\}$, contrary to Proposition~\ref{prop:core}. We also have $\supp_{\llb
6\rrb}([\ralpha]) = \{1,2\}$ so that $\supp_{\llb 4\rrb}([\ralpha]) \cap
\supp_{\llb 6\rrb}([\ralpha]) =\{1,2\}$, which clashes with the conclusion of
Corollary~\ref{cor:singletonintersection}. Once again, this shows that the
assumption that the Coxeter graph has no three-cycles cannot be discarded.
\end{example}

\begin{remark}\label{rem:support for even positions}
Proposition~\ref{prop:core} allows us to assume that $\supp_{\llb
2i\rrb}([\ralpha]) = \{s,t\}$ with $m(s,t)=3$ whenever we have $\llb 2i-1,2i+1
\rrb \in \bs(\ralpha)$ with $\supp_{\llb 2i-1,2i+1\rrb}(\ralpha) = \{s,t\}$.
Moreover, if additionally we have $\llb 2i+1,2i+3 \rrb \in \bs(\ralpha)$, then
we can utilize Corollary~\ref{cor:singletonintersection} to conclude that
$\supp_{\llb 2i+2\rrb}([\ralpha]) = \{t,u\}$ with $m(s,t)=3$. Since $(W,S)$ is
simply laced and triangle free, we know $m(s,u)=2$.  Moreover, we can extend
Proposition~\ref{prop:core} and Corollary~\ref{cor:singletonintersection} to
arbitrary reduced expressions by applying the results to the corresponding link
factors. We will frequently use these facts, and we may do so without
explicitly mentioning the proposition and corollary.
\end{remark}

\end{section}

\begin{section}{Classification of links and braid graphs in Coxeter systems of
type $A_n$}\label{sec:type A}

Our notions of link and link factorization generalize Zollinger's definitions
of string and maximal string decomposition, respectively, for Coxeter systems
of type $A_n$ that appear in~\cite{Zollinger1994a}. Let $(l,k,m,\epsilon)$ be a
quadruple satisfying:
\begin{enumerate}[(a)]
\item $l$ is a positive integer;
\item $k$ is a nonnegative integer less than or equal to $l-1$;
\item $m$ is a positive integer (not necessarily distinct from $l$ or $k$) and;
\item $\epsilon$ is one of $\{+,-, 0\}$;
\end{enumerate}
where $\epsilon=0$ only when $l \leq 2$. From this quadruple, define
$\sigma_{l, k, m, \epsilon}$ with $\epsilon \in \{+,0\}$ to be the word in the
Coxeter system of type $A_n$, called a \emph{string}, as follows. For $l\in
\{1,2\}$, define
\[
\sigma_{1,0,m,0} := s_m, \quad \sigma_{2,0,m,0} := s_m s_{m+1} s_m, \quad
\sigma_{2,1,m,0} = s_{m+1} s_m s_{m+1}.
\]
When $l \geq 3$, then $\sigma_{l,k,m,+}$ is the first $2l-1$ letters from the
following product
\[
s_{m+1} s_m s_{m+2} s_{m+1} s_{m+3} \cdots \UOLunderline{s_{m+k}
s_{m+k-1}}[s_{m+k}] \UOLoverline{s_{m+k+1}s_{m+k}} s_{m+k+2} s_{m+k+1}
s_{m+k+3} s_{m+k+2} \cdots
\]
Note that there are two overlapping opportunities to apply a braid move, each
of which has been underlined or overlined for emphasis. We define the string
$\sigma_{l,k,m,-}$ to be the reverse (i.e., inverse) of $\sigma_{l,l-1-k,m,+}$.
Observe that every string consists of an odd number of letters, namely $2l-1$.
According to~\cite{Zollinger1994a}, each string is a reduced expression.

\begin{example}\label{ex:sigmastringsbraidclass}
Below are six examples of strings in the Coxeter system of type $A_9$:
\begin{center}
\begin{tabular}{ccc}
$\sigma_{6,0,4,+} = 45465768798$ & $\sigma_{6,1,4,+} = 54565768798$ &
$\sigma_{6,2,4,+} = 54656768798$ \\
$\sigma_{6,3,4,+} = 54657678798$ & $\sigma_{6,4,4,+} = 54657687898$ &
$\sigma_{6,5,4,+} = 54657687989$
\end{tabular}
\end{center}
By applying all possible braid moves, one can verify that each expression above
is a link and that these six reduced expressions comprise a single braid class.
\end{example}

The following result is a consequence of Lemmas~1 and 5
in~\cite{Zollinger1994a} and completely characterizes the links in Coxeter
systems of type $A_n$.

\begin{proposition}\label{prop:type A link iff string}
In the Coxeter system of type $A_n$, a reduced expression is a link if and only
if it is a string.
\end{proposition}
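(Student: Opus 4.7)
The plan is to prove the two directions separately, leveraging the detailed structural results of Section~\ref{sec:architecture}. For the direction that every string is a link, I would proceed by cases on the parameter $l$. The cases $l=1$ and $l=2$ are immediate from the definitions: $\sigma_{1,0,m,0}=s_m$ is a single generator, and each $\sigma_{2,k,m,0}$ has a single braid shadow at $\llbracket 1,3\rrbracket$. For $l\geq 3$, I would unwind the explicit product defining $\sigma_{l,k,m,+}$: the underlined and overlined subwords identify two overlapping braid shadows, and applying the braid move at the underlined (respectively overlined) position produces $\sigma_{l,k-1,m,+}$ (respectively $\sigma_{l,k+1,m,+}$). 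Iterating shows $[\sigma_{l,k,m,+}]=\{\sigma_{l,k',m,+}\mid 0\leq k'\leq l-1\}$, and pooling the braid shadows across this family yields exactly $\{\llbracket 1,3\rrbracket,\llbracket 3,5\rrbracket,\ldots,\llbracket 2l-3,2l-1\rrbracket\}$. The $\epsilon=-$ case follows by reversal.

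For the converse, I would argue by induction on the rank $r = l-1$ of the link $\ralpha$. The cases $r=0$ and $r=1$ are handled by inspection, matching $\sigma_{1,0,m,0}$ and $\sigma_{2,k,m,0}$ respectively. For $r\geq 2$, the link has at least two overlapping braid shadows. Applying Corollary~\ref{cor:singletonintersection} to each consecutive pair, the supports $\supp_{\llbracket 2i\rrbracket}([\ralpha])$ and $\supp_{\llbracket 2i+2\rrbracket}([\ralpha])$ share exactly one generator. Since $(W,S)$ is of type $A_n$, braid-related pairs are exactly adjacent generators $\{s_j,s_{j+1}\}$, and Remark~\ref{rem:support for even positions} tells us each $\supp_{\llbracket 2i\rrbracket}([\ralpha])$ is such a pair. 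The singleton-intersection constraint then forces the sequence of supports to march monotonically along the Coxeter path, either as $\{s_{m+1},s_m\},\{s_{m+2},s_{m+1}\},\ldots$ or as its reversal. This monotonicity is exactly the structural content of a string: the starting index determines $m$, the monotonicity direction determines $\epsilon\in\{+,-\}$, and the position of the first occurring braid shadow in a canonical representative determines $k$.

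The main obstacle is the converse direction, specifically verifying that the linear Coxeter diagram of type $A_n$ forces the monotone marching of supports to produce a string rather than some other configuration. One must carefully rule out the possibility that the supports could ``reverse direction'' partway through the link (which would create a repeated generator pattern inconsistent with the expression being reduced) and that the pattern of which element of $\supp_{\llbracket 2i\rrbracket}([\ralpha])$ appears at the odd positions to its left and right is precisely what the string formula prescribes. Since the paper notes that this proposition is a consequence of Lemmas~1 and~5 of~\cite{Zollinger1994a}, I would either cite those lemmas directly after translating notation, or supply the inductive argument above and verify the translation in a short remark.
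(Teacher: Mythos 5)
The paper offers no argument of its own for this proposition: it is stated as a direct consequence of Lemmas~1 and~5 of~\cite{Zollinger1994a}, so the first branch of your plan (cite those lemmas after matching ``string''/``maximal string decomposition'' with ``link''/``link factorization'') is precisely what the paper does. The forward direction of your self-contained alternative is also fine in outline: verifying that each $\sigma_{l,k,m,\epsilon}$ has braid shadows only at $\llb 2k-1,2k+1\rrb$ and $\llb 2k+1,2k+3\rrb$ (those that exist), and that the braid moves there produce $\sigma_{l,k\mp 1,m,\epsilon}$, closes the braid class and gives $\bs([\sigma_{l,k,m,\epsilon}])=\{\llb 1,3\rrb,\llb 3,5\rrb,\ldots,\llb 2l-3,2l-1\rrb\}$, so every string is a link.

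The genuine gap is in the converse, at exactly the point you flag and then leave unresolved. Corollary~\ref{cor:singletonintersection} together with Remark~\ref{rem:support for even positions} only says that the supports $\supp_{\llb 2i\rrb}([\ralpha])$ are edges $\{s_j,s_{j+1}\}$ of the type-$A$ path with consecutive edges sharing exactly one vertex; it does not exclude a reversal such as $\{s_j,s_{j+1}\},\{s_{j+1},s_{j+2}\},\{s_j,s_{j+1}\}$, and your stated reason for excluding one---that it ``would create a repeated generator pattern inconsistent with the expression being reduced''---is not correct as it stands: the word a reversal most obviously suggests, $s_js_{j+1}s_{j+2}s_{j+1}s_js_{j+1}$ (the pattern $123212$ of Example~\ref{ex:A3longest}), is a reduced expression for the longest element of the parabolic subgroup generated by $s_j,s_{j+1},s_{j+2}$, so reducedness of a six-letter window alone yields no contradiction. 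To kill a reversal you must first use Lemma~\ref{lem:structure of overlapping braids} and Lemma~\ref{lem:TeamJJJnew} (and Lemma~\ref{lem:firstlastletter} when the reversal abuts an end of the link) to pin down the letters in the adjacent forced positions; this produces a subword of length $7$ supported on the three consecutive generators $s_j,s_{j+1},s_{j+2}$, for instance $s_{j+1}s_js_{j+1}s_{j+2}s_{j+1}s_js_{j+1}$, which cannot be reduced because the longest element of that type-$A_3$ parabolic has length $6$; one also needs a leftmost-reversal (or induction) step to reduce a general reversal to this local configuration. None of this appears in your sketch, and it is the heart of the converse. Once monotonicity of the supports is established, the remaining identification with the strings does go through roughly as you indicate: Lemma~\ref{lem:TeamJJJnew} forces the even-position letters, read left to right, to switch from the lower to the upper element of their supports at most once (giving $l$ possibilities, matching $k=0,\ldots,l-1$), and Lemma~\ref{lem:equalexp} together with Lemma~\ref{lem:firstlastletter} then determines the entire word, so this part is routine. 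In short: citing Zollinger reproduces the paper; your alternative argument is genuinely different and salvageable, but as written it omits the one nontrivial step.
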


In~\cite{Zollinger1994a}, Zollinger refers to the link factorization of a
reduced expression in terms of strings as the \emph{maximal string
decomposition}. Given the structure of strings, we can completely describe the
corresponding braid graphs with ease. The next result is a reformulation of
Lemma~1 from~\cite{Zollinger1994a} in terms of braid graphs.

\begin{proposition}\label{prop:braid graph for string}
For the link $\sigma_{l,k,m,\epsilon}$, we have
\[
\begin{tikzpicture}[every circle node/.style={draw, circle, inner sep=1.25pt}]
\node [] (0) at (-.25,0) {$B(\sigma_{l,k,m,\epsilon})=$};
\node [above right,rotate=45] (label1) at (1,0) {$\sigma_{l,0,m,\epsilon}$};
\node [above right,rotate=45] (label2) at (2,0) {$\sigma_{l,1,m,\epsilon}$};
\node [above right,rotate=45] (label3) at (3,0) {$\sigma_{l,2,m,\epsilon}$};
\node [above right,rotate=45] (label4) at (4,0) {$\sigma_{l,l-2,m,\epsilon}$};
\node [above right,rotate=45] (label5) at (5,0) {$\sigma_{l,l-1,m,\epsilon}$};
\node [circle] (1) at (1,0){};
\node [circle] (2) at (2,0){};
\node [circle] (3) at (3,0){};
\node [circle] (4) at (4,0){};
\node [circle] (5) at (5,0){};
\node at (3.5,0) {$\dots$};
\draw [turq,-, very thick] (1) to (2);
\draw [turq,-, very thick] (2) to (3);
\draw [turq,-, very thick] (4) to (5);
\node [] (period) at (5.25,0) {.};
\end{tikzpicture}
\]
\end{proposition}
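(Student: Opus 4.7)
The plan is to identify the vertices and edges of $B(\sigma_{l,k,m,\epsilon})$ directly from the explicit form of the strings, using Proposition~\ref{prop:type A link iff string} to control the vertex set. The cases $l=1$ and $l=2$ are immediate: for $l=1$ the braid class is a single vertex, and for $l=2$ the strings $\sigma_{2,0,m,0} = s_m s_{m+1} s_m$ and $\sigma_{2,1,m,0} = s_{m+1} s_m s_{m+1}$ are related by a single braid move. For $l \geq 3$, I will focus on $\epsilon = +$; the $\epsilon = -$ case follows since reversal of reduced expressions is a graph isomorphism sending $\sigma_{l,j,m,-}$ to $\sigma_{l,l-1-j,m,+}$, which transports the claimed path for the $+$ case to that for the $-$ case.

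For the vertex set, Proposition~\ref{prop:type A link iff string} forces every element of $[\sigma_{l,k,m,+}]$ to be a string, and braid moves preserve length, so the length parameter $l$ is fixed. Inspection of the first two letters of the defining product recovers $m$ and $\epsilon$, so the only candidates for vertices are $\sigma_{l,j,m,+}$ for $j \in \{0,1,\ldots,l-1\}$. For the edges, I will catalogue the braid shadows of $\sigma_{l,k,m,+}$ by reading its letters from the defining product. Outside the underlined/overlined window, three consecutive letters involve three distinct adjacent generators of $A_n$ and hence never form a braid shadow. Within the window, the two braid shadows are $\llb 2k-1,2k+1\rrb$ carrying $s_{m+k}s_{m+k-1}s_{m+k}$ and $\llb 2k+1,2k+3\rrb$ carrying $s_{m+k+1}s_{m+k}s_{m+k+1}$; they are present exactly when $k\geq 1$ and $k\leq l-2$, respectively. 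Applying the braid move at the left shadow yields $\sigma_{l,k-1,m,+}$, and at the right shadow $\sigma_{l,k+1,m,+}$, each verified by comparing the resulting word letter-by-letter against the defining product for the adjacent index. This exhibits exactly the edges of the claimed path; iterating shows all $l$ strings are actually braid equivalent, pinning down the vertex set.

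The main obstacle is the bookkeeping required to verify that outside the underlined/overlined window no three consecutive positions in $\sigma_{l,k,m,+}$ form a braid shadow. This is routine given the strict alternation in the defining product $s_{m+1}s_ms_{m+2}s_{m+1}s_{m+3}s_{m+2}\cdots$, but one must carefully handle the boundary cases $k=0$ and $k=l-1$ (where one of the two shadows disappears) as well as the initial and terminal segments of the word, where the pattern of increase/decrease starts or stops. One could also simply cite the corresponding statement in~\cite{Zollinger1994a} directly, since the proposition is a reformulation of Zollinger's Lemma~1; the argument above is the translation into the braid-shadow language of this paper.
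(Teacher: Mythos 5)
Your argument is correct, but it is not the route the paper takes: the paper offers no proof at all for this proposition, presenting it as a reformulation of Lemma~1 of Zollinger~\cite{Zollinger1994a} (just as Proposition~\ref{prop:type A link iff string} is attributed to Zollinger's Lemmas~1 and~5), so the ``official'' justification is by citation, which you correctly identify as an alternative at the end of your proposal. What you supply instead is a self-contained verification in the braid-shadow language of Section~\ref{sec:architecture}: reading off from the defining product that $\sigma_{l,k,m,+}$ has braid shadows exactly at $\llb 2k-1,2k+1\rrb$ (when $k\geq 1$) and $\llb 2k+1,2k+3\rrb$ (when $k\leq l-2$), that the corresponding braid moves produce $\sigma_{l,k-1,m,+}$ and $\sigma_{l,k+1,m,+}$, and that closure of the family $\{\sigma_{l,j,m,\epsilon}\}_{j=0}^{l-1}$ under braid moves pins down both the vertex set and the path structure; the $\epsilon=-$ case transfers by reversal. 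This closure argument is in fact the cleanest way to identify the vertex set, and it makes your preliminary appeal to Proposition~\ref{prop:type A link iff string} (and the slightly loose claim that the first two letters recover $m$ and $\epsilon$) unnecessary. One small indexing slip: the right-hand shadow of $\sigma_{l,k,m,+}$ carries $s_{m+k}s_{m+k+1}s_{m+k}$ in positions $\llb 2k+1,2k+3\rrb$, not $s_{m+k+1}s_{m+k}s_{m+k+1}$ (the latter is what appears after the move); this does not affect the conclusion that the move yields $\sigma_{l,k+1,m,+}$. The trade-off between the two approaches is the usual one: citing Zollinger keeps the paper short but leaves the translation between ``strings'' and the link/braid-shadow framework implicit, whereas your direct check makes the type $A_n$ classification independent of the external reference at the cost of the boundary-case bookkeeping you acknowledge.
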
 

Recall that Corollary~\ref{cor:boxproductsofbraidgraphs} says that the braid
graph for any reduced expression for a group element can be decomposed as the
box product of the braid graphs for the corresponding link factors in the link
factorization. In light of Proposition~\ref{prop:braid graph for string}, we
can be more explicit for Coxeter systems of type $A_n$, as the next result
indicates. This result is also a reformulation of Corollary~6
in~\cite{Zollinger1994a} and can be thought of as a classification of braid
graphs for reduced expressions in Coxeter systems of type $A_n$.

\begin{proposition}\label{prop:braid graphs in type A}
If $\ralpha$ is a reduced expression for a nonidentity $w \in W(A_n)$ with link
factorization $\ralpha_1\mid\ralpha_2\mid\cdots\mid\ralpha_k$ such that each
link $\ralpha_i$ has $2l_i-1$ letters, then
\[
B(\ralpha) \cong
\begin{tabular}{c}
\begin{tikzpicture}[every circle node/.style={draw, circle, inner
sep=1.25pt},decoration={brace,amplitude=7}]
\node [circle] (a) at (2.5,0){};
\node [circle] (b) at (2.5,-1){};
\node (dots) at (2.5,-1.5){$\vdots$};
\node [circle] at (2.5,-2) (c){};
\node [circle] at (2.5,-3)(d){};
\path[-] (a) edge[turq, very thick] (b);
\path[-] (c) edge[turq, very thick] (d);
\draw [decorate] ([xshift=2mm]a.east) --node[right=2mm]{$l_1$}
([xshift=2mm]d.east);

\node at (4,-1.5) {$\Box$};

\node [circle] (e) at (4.5,0){};
\node [circle] (f) at (4.5,-1){};
\node (dots) at (4.5,-1.5){$\vdots$};
\node [circle] at (4.5,-2) (g){};
\node [circle] at (4.5,-3)(h){};
\path[-] (e) edge[turq, very thick] (f);
\path[-] (g) edge[turq, very thick] (h);
\draw [decorate] ([xshift=2mm]e.east) --node[right=2mm]{$l_2$}
([xshift=2mm]h.east);

\node at (6,-1.5) {$\Box$};
\node at (6.50,-1.5) {$\ldots$};
\node at (7,-1.5) {$\Box$};

\node [circle] (i) at (7.5,0){};
\node [circle] (j) at (7.5,-1){};
\node (dots) at (7.5,-1.5){$\vdots$};
\node [circle] at (7.5,-2) (k){};
\node [circle] at (7.5,-3)(l){};
\path[-] (i) edge[turq, very thick] (j);
\path[-] (k) edge[turq, very thick] (l);
\draw [decorate] ([xshift=2mm]i.east) --node[right=2mm]{$l_k$}
([xshift=2mm]l.east);
\end{tikzpicture}
\end{tabular},
\]
where the $i$th link factor in the decomposition is a path graph with $l_i$
vertices.
\end{proposition}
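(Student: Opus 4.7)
The plan is to assemble the statement from three already-established results, so the proof should be quite short. First I would invoke Corollary~\ref{cor:boxproductsofbraidgraphs}, which applies because the Coxeter system of type $A_n$ is simply laced. This immediately yields the isomorphism
\[
B(\ralpha) \cong B(\ralpha_1) \square B(\ralpha_2) \square \cdots \square B(\ralpha_k),
\]
so it only remains to identify each factor $B(\ralpha_i)$ with a path graph on $l_i$ vertices.

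Next I would apply Proposition~\ref{prop:type A link iff string}: since each $\ralpha_i$ is a link in type $A_n$, it must be a string, so $\ralpha_i = \sigma_{l_i', k_i, m_i, \epsilon_i}$ for some allowed quadruple $(l_i', k_i, m_i, \epsilon_i)$. Because every string $\sigma_{l_i', k_i, m_i, \epsilon_i}$ consists of exactly $2l_i' - 1$ letters and we are told that $\ralpha_i$ has $2l_i - 1$ letters, we conclude $l_i' = l_i$. Finally, Proposition~\ref{prop:braid graph for string} describes $B(\sigma_{l_i,k_i,m_i,\epsilon_i})$ explicitly as the path
\[
\sigma_{l_i,0,m_i,\epsilon_i} \mathrel{\text{---}} \sigma_{l_i,1,m_i,\epsilon_i} \mathrel{\text{---}} \cdots \mathrel{\text{---}} \sigma_{l_i,l_i-1,m_i,\epsilon_i},
\]
which has exactly $l_i$ vertices. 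Substituting these identifications back into the box product decomposition gives the claimed form of $B(\ralpha)$.

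There is essentially no obstacle here beyond a small bookkeeping check that the parameter $l_i'$ coming from the string form of $\ralpha_i$ agrees with the integer $l_i$ determined by counting letters; this is a one-line verification using the fact, recorded in Section~\ref{sec:type A}, that the length of $\sigma_{l,k,m,\epsilon}$ is $2l-1$. The proposition is therefore just a direct unpacking of Corollary~\ref{cor:boxproductsofbraidgraphs}, Proposition~\ref{prop:type A link iff string}, and Proposition~\ref{prop:braid graph for string} in the type $A_n$ setting.
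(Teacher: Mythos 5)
Your proposal is correct and follows exactly the route the paper intends: the paper gives no separate proof of Proposition~\ref{prop:braid graphs in type A}, presenting it as an immediate consequence of Corollary~\ref{cor:boxproductsofbraidgraphs} together with Propositions~\ref{prop:type A link iff string} and~\ref{prop:braid graph for string} (and noting it reformulates Zollinger's Corollary~6). Your bookkeeping check that the string parameter matches $l_i$ via the letter count $2l_i-1$ is the only detail needed, and it is handled correctly.
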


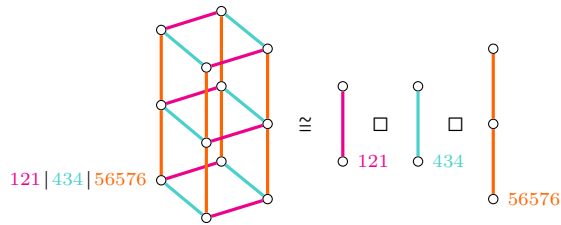
\begin{figure}[ht!]
\begin{tikzpicture}[every circle node/.style={draw, circle, fill=none ,inner
sep=1.25pt}]
\node [circle] (1) at (8.414,1.75){};
\node [circle] (2) [label=right: $\scriptstyle \textcolor{magenta}{121}$] at
(8.414,.75){};
\path[-] (1) edge[magenta, very thick] (2);

\node (x1) at (7.914,1.25) {$\cong$};

\node [circle] (3) at (9.414,1.75){};
\node [circle] (4) [label= right: $\scriptstyle \textcolor{turq}{434}$] at
(9.414,.75){};
\path[-] (3) edge[turq, very thick] (4);

\node (x2) at (8.914,1.25) {$\Box$};

\node [circle] (5) at (10.414,2.25){};
\node [circle] (6) at (10.414,1.25){};
\node [circle] (7) [label= right: $\scriptstyle \textcolor{orange2}{56576}$] at
(10.414,.25){};
\path[-] (5) edge[orange2, very thick]  (6);
\path[-] (6) edge[orange2, very thick]  (7);

\node (e) at (9.914,1.25) {$\Box$};

\node [circle] (8) at (6,2.5){};
\node [circle] (9) at (6,1.5){};
\node [circle] (10) [label=left: $\scriptstyle \textcolor{magenta}{121} \, \mid
\, \textcolor{turq}{434} \, \mid \, \textcolor{orange2}{56576} $]  at (6,.5){};
\path[-] (8) edge[orange2, very thick]  (9);
\path[-] (9) edge[orange2, very thick]  (10);

\node [circle] (11) at (6.6,2){};
\node [circle] (12) at (6.6,1){};
\node [circle] (13) at (6.6,0){};

\node [circle] (14) at (7.414,2.25){};
\node [circle] (15) at (7.414,1.25){};
\node [circle] (16) at (7.414,.25){};
\path[-] (14) edge[orange2, very thick] (15);

\path[-] (10) edge[turq, very thick]  (13); 
\path[-] (13) edge[magenta, very thick]  (16); 

\node [circle] (20) at (6.8,2.75){}; 
\node [circle] (21) at (6.8,1.75){}; 
\node [circle] (22) at (6.8,.75){}; 
\path[-] (8) edge[magenta, very thick]  (20); 
\path[-] (20) edge[turq, very thick]  (14);

\path[draw] (20) edge[orange2, very thick] (21); 
\path[draw] (21) edge[orange2, very thick]  (22);

\path[draw] (9) edge[magenta, very thick]  (21);
\path[draw] (10) edge[magenta, very thick]  (22);

\path[draw] (21) edge[turq, very thick]  (15);
\path[draw] (22) edge[turq, very thick]  (16);
\path[-] (11) edge[magenta, very thick]  (14);
\path[-] (12) edge[magenta, very thick]  (15);
\path[-] (11) edge[orange2, very thick] (12);

\path[-] (8) edge[turq, very thick]  (11);

\path[-] (9) edge[turq, very thick]  (12);
\path[-] (15) edge[orange2, very thick]  (16);
\path[-] (12) edge[orange2, very thick]  (13);
\end{tikzpicture}\caption{Braid graph for the reduced expression from
Example~\ref{ex:product of paths} and its decomposition into a box product of
path graphs.}\label{fig:product of paths}
\end{figure}

\begin{example}\label{ex:product of paths}
The word $\ralpha = 12143456576$ is a reduced expression for some element in
$W(A_7)$. The link factorization for $\ralpha$ is $\textcolor{magenta}{121}
\mid\textcolor{turq}{434} \mid \textcolor{orange2}{56576}$. The resulting braid
graph and its decomposition into a box product of path graphs is shown in
Figure~\ref{fig:product of paths}.
\end{example}

\end{section}

\begin{section}{Braid graphs as partial cubes}\label{sec:embedding}

If $G$ is a graph, let $V(G)$ denote the vertex set of $G$. If $S\subseteq
V(G)$ is any subset of vertices of $G$, then we define the \emph{induced
subgraph} $G[S]$ to be the graph whose vertex set is $S$ and whose edge set
consists of all of the edges of $G$ that have endpoints in $S$. An
\emph{embedding} of a simple graph $G$ into a simple graph $H$ is an injection
$f:V(G)\to V(H)$ with the property that if $u$ and $v$ are adjacent vertices in
$G$, then $f(u)$ and $f(v)$ are adjacent in $H$. If in addition, $f(u)$ and
$f(v)$ adjacent in $H$ implies $u$ and $v$ adjacent in $G$, then we say that
$f$ is an \emph{induced embedding}. In the graph theory literature, an
embedding is often referred to as a monomorphism while an induced embedding is
a faithful monomorphism~\cite{Hahn1997}. If $f$ is an induced embedding, then
$G$ is isomorphic to the subgraph of $H$ induced by the image of $f$. That is,
$G\cong H[\im(f)]$.

We can view any connected graph $G$ as a metric space by taking the standard
geodesic metric. That is, the distance between $u,v\in V(G)$ is defined via
\[
d_G(u,v):=\text{length of any minimal path between }u\text{ and }v.
\]
An \emph{isometric embedding} of $G$ into $H$ is a function $f:V(G)\to V(H)$
with the property that $d_G(u,v) = d_H(f(u),f(v))$ for all $u,v \in V(G)$.
Since an isometry is injective and two vertices are adjacent if and only if the
distance between them is one, every isometric embedding is also an induced
embedding. It is worth mentioning that an induced embedding is not necessarily
an isometric embedding. As an example, consider an embedding of a path with
four vertices into a cycle with five vertices. This embedding is an induced
embedding but is not an isometric embedding.

For a nonnegative integer $r$ we will denote the set of binary strings of
length $r$ by $\{0,1\}^r$. That is,
\[
\{0,1\}^r := \{a_1a_2\cdots a_r \mid a_k \in \{0,1\} \}.
\]
The \emph{hypercube graph} of dimension $r\geq 0$, denoted by $Q_r$, is defined
to be the graph whose vertices are elements of $\{0,1\}^r$ with two binary
strings connected by an edge exactly when they differ by a single digit (i.e.,
the Hamming distance between the two vertices is equal to one). Note that $Q_0$
consists of a single vertex labeled by the empty string. A \emph{partial cube}
is a graph that can be isometrically embedded into a hypercube. The
\emph{isometric dimension} of a partial cube is the minimum dimension of a
hypercube into which it may be isometrically embedded. That is, the isometric
dimension of a partial cube $G$ is the nonnegative integer
\[
\dim_I(G):=\min\{m \in \N\cup\{0\} \mid \text{there exists an isometric
embedding of $G$ into $Q_m$}\}.
\]
The following proposition is a result from~\cite{ovchinnikov2008partial}.

\begin{proposition}\label{prop:boxprodpartial}
If $G_1$ and $G_2$ are partial cubes, then $G_1\square G_2$ is a partial cube.
Moreover, $\dim_I(G_1\square G_2)=\dim_I(G_1)+\dim_I(G_2)$.
\end{proposition}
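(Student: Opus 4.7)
The plan is to handle the two halves of the statement separately. The claim that $G_1\square G_2$ is a partial cube and the upper bound $\dim_I(G_1\square G_2)\leq \dim_I(G_1)+\dim_I(G_2)$ will follow from one explicit construction; the matching lower bound requires a bit more work.

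For the upper bound, set $r_i := \dim_I(G_i)$ and let $f_i\colon V(G_i)\to V(Q_{r_i})$ be isometric embeddings. Identify $V(Q_{r_1+r_2})$ with $V(Q_{r_1})\times V(Q_{r_2})$ via string concatenation, so that the Hamming distance on $Q_{r_1+r_2}$ decomposes as the sum of the Hamming distances on the two factors. Define $f\colon V(G_1\square G_2)\to V(Q_{r_1+r_2})$ by $f(u_1,u_2):=f_1(u_1)f_2(u_2)$. Because the box-product distance decomposes additively as $d_{G_1\square G_2}((u_1,u_2),(v_1,v_2))=d_{G_1}(u_1,v_1)+d_{G_2}(u_2,v_2)$, and Hamming distance does as well, $f$ is isometric. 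This establishes both that $G_1\square G_2$ is a partial cube and that $\dim_I(G_1\square G_2)\leq r_1+r_2$.

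For the lower bound $\dim_I(G_1\square G_2)\geq r_1+r_2$, I would invoke the classical Djokovi\'{c}--Winkler characterization: the isometric dimension of any partial cube equals the number of equivalence classes of the edge relation $\Theta$, where edges $uv$ and $u'v'$ are $\Theta$-related iff $d(u,u')+d(v,v')\neq d(u,v')+d(v,u')$. The edges of $G_1\square G_2$ partition into horizontal edges (of the form $(u_1,x)(v_1,x)$ with $u_1v_1$ an edge of $G_1$) and vertical edges (of the form $(x,u_2)(x,v_2)$ with $u_2v_2$ an edge of $G_2$). Using the additivity of the box-product distance, I would verify that no horizontal edge is $\Theta$-related to a vertical edge, and that two horizontal edges are $\Theta$-related in $G_1\square G_2$ iff the underlying edges in $G_1$ are $\Theta$-related there (with the analogous statement for vertical edges and $G_2$). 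Consequently the $\Theta$-classes of $G_1\square G_2$ biject with the disjoint union of the $\Theta$-classes of $G_1$ and of $G_2$, yielding exactly $r_1+r_2$ classes.

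The main obstacle is checking the three $\Theta$-relation claims cleanly; they reduce to routine computations using the additive distance formula but must be organized carefully. A more hands-on alternative would be to show directly that any isometric embedding of $G_1\square G_2$ into $Q_n$ restricts to isometric embeddings of the axis-parallel subgraphs $V(G_1)\times\{u_2^*\}\cong G_1$ and $\{u_1^*\}\times V(G_2)\cong G_2$, and then argue via the additivity of distance that the coordinates of $Q_n$ varied by these two restrictions must be disjoint, forcing $n\geq r_1+r_2$.
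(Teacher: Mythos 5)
Your argument is correct, but note that the paper does not actually prove this proposition: it is quoted as a known result and attributed to the reference \cite{ovchinnikov2008partial}, so there is no in-paper proof to compare against. Your upper-bound construction (concatenating isometric embeddings of the factors and using the additivity of the box-product and Hamming distances) is exactly the standard argument, and your lower bound via the Djokovi\'c--Winkler relation is also the standard route: once you know $G_1\square G_2$ is a partial cube, its isometric dimension equals the number of $\Theta$-classes, and the three claims you defer do reduce to the additive distance formula (a horizontal edge $(u_1,x)(v_1,x)$ and a vertical edge $(y,u_2)(y,v_2)$ give equal cross-sums, so they are never $\Theta$-related, while for two horizontal edges the $2\,d_{G_2}(x,x')$ terms cancel, so $\Theta$-relatedness in the product is equivalent to $\Theta$-relatedness of the projections in $G_1$, and symmetrically for vertical edges). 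The only things you use without proof are Djokovi\'c's theorem that $\dim_I$ equals the number of $\Theta$-classes and Winkler's theorem that $\Theta$ is transitive on partial cubes; both are classical and are consistent with how the paper itself treats the Djokovi\'c--Winkler relation in its closing section, so invoking them is appropriate here. Your alternative ``hands-on'' lower-bound sketch would also work but needs the disjointness-of-coordinates claim fleshed out; as written, the $\Theta$-class argument is the cleaner and complete one.
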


Assume $(W,S)$ is a simply-laced triangle-free Coxeter system and suppose
$\ralpha$ is a reduced expression for some $w\in W$. The goal of this section
is to establish an isometric embedding of $B(\ralpha)$ into
$Q_{\rank(\ralpha)}$ (see Theorem~\ref{thm:braid graphs are partial cubes}).
The crux is proving that every link can be isometrically embedded in a
hypercube whose dimension is at most the rank of the link (see
Proposition~\ref{prop:braid graph for a link is a partial cube}). Then we can
simply apply the decomposition for reduced expressions given in
Corollary~\ref{cor:boxproductsofbraidgraphs} together with
Proposition~\ref{prop:boxprodpartial} to obtain the result for arbitrary
reduced expressions. It also follows that the isometric dimension of every
braid graph is bounded above by the number of braid shadows.

Throughout this section, we will utilize many of the properties of braid
equivalent reduced expressions and links that were developed in
Section~\ref{sec:architecture}. We begin with several technical lemmas that
will be useful in the the proof of Proposition~\ref{prop:braid graph for a link
is a partial cube}. Our first lemma indicates that the left and right ends of a
link are fairly rigid in structure.

\begin{lemma}\label{lem:firstlastletter}
Suppose $(W, S)$ is a simply-laced triangle-free Coxeter system and let
$\ralpha$ be a link of rank $r\geq 1$.
\begin{enumerate}[(a)]
\item If $\supp_{\llb 2\rrb}([\ralpha]) = \{s,t\}$ with $m(s,t)=3$, then
$\ralpha_{\llb 1,2\rrb}=st$ or $\ralpha_{\llb 1,2\rrb}=ts$.
\item If $\supp_{\llb 2r\rrb}([\ralpha]) = \{s,t\}$ with $m(s,t)=3$, then
$\ralpha_{\llb 2r,2r+1\rrb}=st$ or $\ralpha_{\llb 2r,2r+1\rrb}=ts$.
\end{enumerate}
\end{lemma}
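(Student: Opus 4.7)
The plan is to prove part (a) first and then deduce part (b) from (a) by a symmetry argument: reversing the reduced expression $\ralpha$ preserves reducedness, is a bijection on braid classes, sends links of rank $r$ to links of rank $r$, and interchanges position $\llb i\rrb$ with position $\llb 2r+2-i\rrb$ while preserving all local supports and braid shadows. In particular, the shadow $\llb 2r-1,2r+1\rrb$ of $\ralpha$ corresponds to the shadow $\llb 1,3\rrb$ of the reverse, so applying (a) to the reverse yields (b).

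For (a), the key observation is that in a simply-laced Coxeter system the only braid move that can change the letter in position $1$ of a reduced expression is a move acting on the braid shadow $\llb 1,3\rrb$, since this is the only braid shadow that contains position $1$. Since $\ralpha$ is a link of rank $r\geq 1$, Definition~\ref{def:braidchain} gives $\llb 1,3\rrb\in\bs([\ralpha])$, and Propositions~\ref{prop:core} and~\ref{prop:equalsupport} together show that every $\rgamma\in[\ralpha]$ admitting $\llb 1,3\rrb$ as a braid shadow satisfies $\supp_{\llb 1,3\rrb}(\rgamma)=\supp_{\llb 2\rrb}([\ralpha])=\{s,t\}$. In particular there exists $\rbeta\in[\ralpha]$ with $\rbeta_{\llb 1\rrb}\in\{s,t\}$.

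I would then trap the first letter of $\ralpha$ in $\{s,t\}$ by picking any braid path $\ralpha=\ralpha_0,\ralpha_1,\ldots,\ralpha_k=\rbeta$ and tracking how $(\ralpha_i)_{\llb 1\rrb}$ evolves. This letter is constant from step to step except when the $i$th braid move acts on $\llb 1,3\rrb$, and each such move requires $(\ralpha_{i-1})_{\llb 1\rrb}\in\{s,t\}$ by the paragraph above. Consequently, if $\ralpha_{\llb 1\rrb}\notin\{s,t\}$, no move along the path could act on $\llb 1,3\rrb$, so the first letter would never change and we would obtain $\rbeta_{\llb 1\rrb}=\ralpha_{\llb 1\rrb}\notin\{s,t\}$, contradicting the choice of $\rbeta$. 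Hence $\ralpha_{\llb 1\rrb}\in\{s,t\}$. Together with $\ralpha_{\llb 2\rrb}\in\supp_{\llb 2\rrb}([\ralpha])=\{s,t\}$ and the fact that a reduced expression cannot have two identical adjacent letters, this yields $\ralpha_{\llb 1,2\rrb}\in\{st,ts\}$.

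The main subtlety to articulate carefully is this \emph{trapping} argument, namely that the only way to change position $1$ along a braid path is via moves that themselves already witness the pair $\{s,t\}$ in that position. The heavy lifting there is done by Propositions~\ref{prop:core} and~\ref{prop:equalsupport}, which are precisely what pin the support of the shadow $\llb 1,3\rrb$ uniformly to $\{s,t\}$ across the entire braid class.
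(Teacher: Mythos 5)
Your proposal is correct and takes essentially the same route as the paper: part~(a) rests on Proposition~\ref{prop:core} pinning $\supp_{\llb 1,3\rrb}$ to $\{s,t\}$ for every member of the class admitting that shadow, and your explicit trapping argument (position $1$ can only be altered by a move at $\llb 1,3\rrb$) is precisely the reasoning implicit in the paper's claim that its auxiliary set $X$ of such expressions containing the first letter of $\ralpha$ is nonempty. Part~(b) is handled by the same left--right symmetry (word reversal) in both treatments.
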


\begin{proof}
Suppose $\ralpha$ is a link such that $\supp_{\llb 2\rrb}([\ralpha]) = \{s,t\}$
with $m(s,t)=3$ and let $\ralpha_{\llb 1,2\rrb}=us$. Note that $u \neq s$ since
$\ralpha$ is reduced. Now, consider the set
\[
X = \big\{\rbeta \in [\ralpha]\mid \llb 1,3\rrb \in \bs(\rbeta) \text{ and }
u\in \supp_{\llb 1,3\rrb}(\rbeta)\big\}.
\]
Since $\ralpha$ is a link, $X$ is nonempty. Choose any $\rbeta \in X$. Then
$\supp_{\llb 1,3\rrb}(\rbeta) = \{s,t\}$ by Proposition~\ref{prop:core}, and so
$u = t$. This proves Part~(a). Part~(b) follows from a symmetric argument.
\end{proof}

The next lemma states that the support of the common position of two
overlapping braid shadows has cardinality three and places restrictions on the
local structure of overlapping braid shadows.

\begin{lemma}\label{lem:TeamJJJnew}
Suppose $(W, S)$ is a simply-laced triangle-free Coxeter system. If $\ralpha$
is a link of rank $r\geq 2$ such that for $1\leq i\leq r-1$, $\supp_{\llb
2i\rrb}([\ralpha]) = \{s,t\}$ and $\supp_{\llb 2i+2\rrb}([\ralpha]) = \{t,u\}$
with $m(s,t)=3=m(t,u)$ according to Remark~\ref{rem:support for even
positions}, then $\supp_{\llb 2i+1\rrb}([\ralpha]) =\{s,t,u\}$, $\ralpha_{\llb
2i\rrb}\neq \ralpha_{\llb 2i+2\rrb}$, and $\ralpha_{\llb 2i+1\rrb}\in
\{s,t,u\}\setminus \{\ralpha_{\llb 2i\rrb}, \ralpha_{\llb 2i+2\rrb}\}$.
\end{lemma}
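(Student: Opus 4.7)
The plan is to establish the three pieces of the conclusion in order, using Propositions~\ref{prop:hugh} and~\ref{prop:core} together with Corollary~\ref{cor:singletonintersection} as the main levers, plus the fact that $\ralpha$ is reduced.

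To see $\{s,t,u\}\subseteq \supp_{\llb 2i+1\rrb}([\ralpha])$, I would first observe that since $\ralpha$ is a link of rank at least $2$ and $1\leq i\leq r-1$, both $\llb 2i-1,2i+1\rrb$ and $\llb 2i+1,2i+3\rrb$ lie in $\bs([\ralpha])$. Choose $\rbeta\in[\ralpha]$ realizing the shadow at $\llb 2i-1,2i+1\rrb$. Proposition~\ref{prop:core} then forces $\supp_{\llb 2i-1,2i+1\rrb}(\rbeta)=\{s,t\}$, and applying the braid move at that shadow produces a second element of $[\ralpha]$ in which the complementary element of $\{s,t\}$ sits in position $2i+1$. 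Hence $\{s,t\}\subseteq \supp_{\llb 2i+1\rrb}([\ralpha])$, and the symmetric argument at $\llb 2i+1,2i+3\rrb$ gives $\{t,u\}\subseteq \supp_{\llb 2i+1\rrb}([\ralpha])$.

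The main obstacle is the reverse inclusion, for which I would track how the letter at position $2i+1$ can evolve along a braid path. Given $v\in\supp_{\llb 2i+1\rrb}([\ralpha])$ witnessed by $\rzeta\in[\ralpha]$, take a minimal sequence of braid moves $\rzeta=\ralpha_0,\ralpha_1,\ldots,\ralpha_k=\rbeta$ from $\rzeta$ to the $\rbeta$ chosen above. If no move along the sequence disturbs position $2i+1$, then $v$ agrees with $\rbeta_{\llb 2i+1\rrb}\in\{s,t\}$. Otherwise, consider the first move $b_j$ that does; its braid shadow must be one of $\llb 2i-1,2i+1\rrb$, $\llb 2i,2i+2\rrb$, or $\llb 2i+1,2i+3\rrb$. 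Proposition~\ref{prop:hugh} applied to the shadow $\llb 2i-1,2i+1\rrb\in\bs([\ralpha])$ eliminates the middle option, and the expression $\ralpha_{j-1}$ immediately preceding $b_j$ still has $v$ in position $2i+1$ while carrying a shadow at $\llb 2i-1,2i+1\rrb$ or $\llb 2i+1,2i+3\rrb$. Invoking Proposition~\ref{prop:core} at this shadow pins $v$ in $\{s,t\}$ or $\{t,u\}$, and in every case $v\in\{s,t,u\}$.

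For $\ralpha_{\llb 2i\rrb}\neq \ralpha_{\llb 2i+2\rrb}$, Corollary~\ref{cor:singletonintersection} forces the intersection $\{s,t\}\cap\{t,u\}$ to be the singleton $\{t\}$; if equality held, both entries would necessarily equal $t$, giving a subword $\ralpha_{\llb 2i,2i+2\rrb}=t\,\ralpha_{\llb 2i+1\rrb}\,t$. Reducedness of $\ralpha$ rules out $\ralpha_{\llb 2i+1\rrb}=t$, while the first claim restricts the middle letter to $\{s,u\}$; either choice produces a braid shadow at $\llb 2i,2i+2\rrb$ in $\ralpha$, contradicting Proposition~\ref{prop:hugh}. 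Finally, the first claim places $\ralpha_{\llb 2i+1\rrb}$ in $\{s,t,u\}$, and reducedness of $\ralpha$ removes $\ralpha_{\llb 2i\rrb}$ and $\ralpha_{\llb 2i+2\rrb}$ as candidates for position $2i+1$, leaving exactly the required set.
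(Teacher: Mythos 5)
Your proof is correct and follows essentially the same route as the paper's: establish $\supp_{\llb 2i+1\rrb}([\ralpha])=\{s,t,u\}$ by locating an expression with $v$ at position $2i+1$ carrying a shadow at $\llb 2i-1,2i+1\rrb$ or $\llb 2i+1,2i+3\rrb$ (the middle shadow being killed by Proposition~\ref{prop:hugh}), then rule out $\ralpha_{\llb 2i\rrb}=\ralpha_{\llb 2i+2\rrb}$ via a forbidden shadow at $\llb 2i,2i+2\rrb$, and finish with reducedness. Your path-tracking argument simply makes explicit the existence step the paper asserts tersely, and using Proposition~\ref{prop:core} in place of Proposition~\ref{prop:equalsupport} is an equivalent choice of tool.
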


\begin{proof}
Let $\ralpha$ be a link of rank $r\geq 2$ such that $\supp_{\llb
2i\rrb}([\ralpha]) = \{s,t\}$ and $\supp_{\llb 2i+2\rrb}([\ralpha]) = \{t,u\}$
with $m(s,t)=3=m(t,u)$. It is clear that $\{s,t,u\} \subseteq \supp_{\llb
2i+1\rrb}([\ralpha])$. Now, let $v \in \supp_{\llb 2i+1\rrb}([\ralpha])$ and
consider the set
\[
X = \big\{\rbeta\in[\ralpha]\mid\supp_{\llb 2i+1\rrb}(\rbeta) = \{v\}\big\}.
\]
Since $\supp_{\llb 2i+1\rrb}([\ralpha])\neq \{v\}$, there exists $\rbeta \in X$
such that either $\llb 2i-1,2i+1\rrb \in \bs(\rbeta)$ or $\llb 2i+1,2i+3\rrb
\in \bs(\rbeta)$. Then $v \in \{s,t\}$ or $v \in \{t,u\}$ according to
Proposition~\ref{prop:equalsupport}. This proves the first claim. If
$\ralpha_{\llb 2i\rrb}= \ralpha_{\llb 2i+2\rrb}$, then it must be the case that
$t$ occupies positions $2i$ and $2i+2$ in $\ralpha$.  But this would imply that
$\ralpha_{\llb 2i+1\rrb}\in\{s,u\}$, which violates Proposition~\ref{prop:hugh}
in either case. Thus, $\ralpha_{\llb 2i\rrb}\neq \ralpha_{\llb 2i+2\rrb}$. It
follows that $\ralpha_{\llb 2i+1\rrb}\in \{s,t,u\}\setminus \{\ralpha_{\llb
2i\rrb}, \ralpha_{\llb 2i+2\rrb}\}$, otherwise $\ralpha$ would not be reduced.
\end{proof}

One consequence of Lemma~\ref{lem:TeamJJJnew} is that for any two overlapping
braid shadows in a braid chain $[\ralpha]$, there are three possible forms that
$\ralpha_{\llb 2i,2i+2\rrb}$ may take:
\begin{enumerate}[(a)]
\item
$\displaystyle\underbrace{\cdots\lfrac{?}{2i-1}\lfrac{s}{2i}\lfrac{u}{2i+1
}\lfrac{t}{2i+2}\lfrac{?}{2i+3}\cdots}_{\ralpha}$
\item
$\displaystyle\underbrace{\cdots\lfrac{?}{2i-1}\lfrac{s}{2i}\lfrac{t}{2i+1
}\lfrac{u}{2i+2}\lfrac{?}{2i+3}\cdots}_{\ralpha}$
\item
$\displaystyle\underbrace{\cdots\lfrac{?}{2i-1}\lfrac{t}{2i}\lfrac{s}{2i+1
}\lfrac{u}{2i+2}\lfrac{?}{2i+3}\cdots}_{\ralpha}$
\end{enumerate}
where $m(s,t)=3=m(t,u)$ and $m(s,u)=2$. Note that position $2i+1$ is the
location where the two braid shadows in $[\ralpha]$ overlap.

The next lemma states that for every link $\ralpha$ and every pair of
overlapping braid shadows for $[\ralpha]$, there exists a link in $[\ralpha]$
such that overlapping braid shadows occur simultaneously.

\begin{lemma}\label{lem:structure of overlapping braids}
If $(W, S)$ is a simply-laced triangle-free Coxeter system and $\ralpha$ is a
link of rank $r\geq 2$, then for all $1\leq i\leq r-1$ there exists $\rsigma
\in [\ralpha]$ with the property that $\llb 2i-1,2i+1\rrb,\llb 2i+1,2i+3\rrb
\in \bs(\rsigma)$.
\end{lemma}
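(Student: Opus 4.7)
The plan is to pin down the exact local form $\rsigma$ must take and then exhibit such an $\rsigma$ in $[\ralpha]$ via a two-sided repair from a judiciously chosen starting point. Applying Proposition~\ref{prop:core}, Corollary~\ref{cor:singletonintersection}, and the triangle-free hypothesis (as organized in Remark~\ref{rem:support for even positions}), I may write $\supp_{\llb 2i\rrb}([\ralpha])=\{s,t\}$ and $\supp_{\llb 2i+2\rrb}([\ralpha])=\{t,u\}$ with $m(s,t)=m(t,u)=3$ and $m(s,u)=2$. If $\rsigma\in[\ralpha]$ realizes both braid shadows, then Proposition~\ref{prop:core} forces $\supp_{\llb 2i-1,2i+1\rrb}(\rsigma)=\{s,t\}$ and $\supp_{\llb 2i+1,2i+3\rrb}(\rsigma)=\{t,u\}$; the overlap at position $2i+1$ pushes $\rsigma_{\llb 2i+1\rrb}$ into $\{s,t\}\cap\{t,u\}=\{t\}$, and the braid-shadow form $aba$ on each triple then forces $\rsigma_{\llb 2i-1,2i+3\rrb}=tstut$. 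Conversely, any $\rsigma\in[\ralpha]$ with these middle five positions realizes both shadows, so the lemma reduces to producing such a $\rsigma$.

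By Lemma~\ref{lem:TeamJJJnew}, $\supp_{\llb 2i+1\rrb}([\ralpha])=\{s,t,u\}$, so I can choose $\rho\in[\ralpha]$ with $\rho_{\llb 2i+1\rrb}=t$; a second application of Lemma~\ref{lem:TeamJJJnew} to $\rho$ forces $\rho_{\llb 2i,2i+2\rrb}=stu$. What remains is to adjust $\rho$, using only braid moves that leave positions $\{2i,2i+1,2i+2\}$ untouched, so that $\rho_{\llb 2i-1\rrb}=t$ and $\rho_{\llb 2i+3\rrb}=t$. The two sides are handled symmetrically, and I describe the right.

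If $i=r-1$, Lemma~\ref{lem:firstlastletter}(b) with $\supp_{\llb 2r\rrb}([\ralpha])=\{t,u\}$ and $\rho_{\llb 2r\rrb}=u$ immediately gives $\rho_{\llb 2r+1\rrb}=t$. Otherwise Corollary~\ref{cor:singletonintersection} and the triangle-free hypothesis yield $\supp_{\llb 2i+4\rrb}([\ralpha])=\{c,v\}$ with $c\in\{t,u\}$ the element shared with $\supp_{\llb 2i+2\rrb}$, and invoking Lemma~\ref{lem:TeamJJJnew} at positions $2i+2,2i+3,2i+4$ of $\rho$ shows that $\rho_{\llb 2i+3\rrb}=t$ is forced in every configuration except the single bad case $c=t$ together with $\rho_{\llb 2i+4\rrb}=t$, in which $\rho_{\llb 2i+3\rrb}=v$ instead. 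In that case I repair $\rho$ by applying the braid move $vtv\mapsto tvt$ at positions $\llb 2i+3,2i+5\rrb$; applicability, namely $\rho_{\llb 2i+5\rrb}=v$, is supplied by a parallel analysis one step further right, and if that analysis itself lands in the bad configuration a finite cascade of such repairs must terminate at position $2r+1$ via Lemma~\ref{lem:firstlastletter}(b).

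The hard part will be controlling this rightward cascade carefully: I must check that every repair acts on positions strictly greater than $2i+2$ (preserving $\rho_{\llb 2i+1\rrb}=t$ and $\rho_{\llb 2i,2i+2\rrb}=stu$) and that the recursion terminates in every chain of bad-followed-by-bad subcases, not merely the first step. An analogous leftward cascade, anchored by Lemma~\ref{lem:firstlastletter}(a), secures $\rho_{\llb 2i-1\rrb}=t$; because the two cascades act on disjoint ranges of positions they commute and may be performed independently, producing the desired $\rsigma$ with $\rsigma_{\llb 2i-1,2i+3\rrb}=tstut$.
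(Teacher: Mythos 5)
Your reduction of the lemma to exhibiting some $\rsigma\in[\ralpha]$ with $\rsigma_{\llb 2i-1,2i+3\rrb}=tstut$, your choice of $\rho$ with $\rho_{\llb 2i+1\rrb}=t$ (forcing $\rho_{\llb 2i,2i+2\rrb}=stu$ by Lemma~\ref{lem:TeamJJJnew}), and your identification of the unique bad configuration on the right (shared element of $\supp_{\llb 2i+2\rrb}([\ralpha])$ and $\supp_{\llb 2i+4\rrb}([\ralpha])$ equal to $t$ together with $\rho_{\llb 2i+4\rrb}=t$) are all correct; this is essentially the bookkeeping that the paper compresses into its one-sentence proof citing Lemma~\ref{lem:TeamJJJnew}. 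The problem is that your argument stops exactly at its crux: you assert that a rightward cascade of repair braid moves exists, touches only positions greater than $2i+2$, and terminates, and then explicitly defer the verification (``the hard part will be controlling this rightward cascade''). Everything up to that point is forced by the lemmas you cite, so the unverified cascade \emph{is} the content of the lemma beyond Lemma~\ref{lem:TeamJJJnew}; as submitted, the proposal is a plan with its key step missing rather than a proof.

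The gap is genuinely fillable, most cleanly by replacing the cascade with a downward induction that packages your repair step. For $i+1\le j\le r$, prove by induction on $r-j$: for every $\rho'\in[\ralpha]$ there exists $\rho''\in[\ralpha]$ agreeing with $\rho'$ in positions $1,\ldots,2j$ and whose letter in position $2j+1$ is the element of $\supp_{\llb 2j\rrb}([\ralpha])$ different from $\rho'_{\llb 2j\rrb}$. The base case $j=r$ is exactly Lemma~\ref{lem:firstlastletter}(b) with $\rho''=\rho'$. For $j<r$, write $\supp_{\llb 2j\rrb}([\ralpha])=\{a,b\}$ with $\rho'_{\llb 2j\rrb}=a$; your case analysis via Lemma~\ref{lem:TeamJJJnew} and Corollary~\ref{cor:singletonintersection} shows either $\rho'_{\llb 2j+1\rrb}=b$ already, or $\supp_{\llb 2j+2\rrb}([\ralpha])=\{b,d\}$ with $\rho'_{\llb 2j+1\rrb}=d$ and $\rho'_{\llb 2j+2\rrb}=b$; in the latter case the inductive hypothesis at $j+1$ yields an element agreeing with $\rho'$ through position $2j+2$ whose letter in position $2j+3$ is $d$, and one braid move $dbd\mapsto bdb$ at the shadow $\llb 2j+1,2j+3\rrb$ finishes the step while leaving positions $1,\ldots,2j$ untouched. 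Applying this statement at $j=i+1$ alters only positions $\ge 2i+3$, the mirror statement anchored by Lemma~\ref{lem:firstlastletter}(a) alters only positions $\le 2i-1$, and composing the two produces the desired $\rsigma$ with $\rsigma_{\llb 2i-1,2i+3\rrb}=tstut$. With this induction written out, your approach is sound and amounts to supplying the details behind the paper's terse appeal to Lemma~\ref{lem:TeamJJJnew}.
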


\begin{proof}
This result follows from Lemma~\ref{lem:TeamJJJnew}.
\end{proof}

Equivalently, the previous lemma states that if $(W,S)$ is a simply-laced
triangle-free Coxeter system and $\ralpha$ is a link of rank at least two such
that $\supp_{\llb 2i\rrb}([\ralpha]) = \{s,t\}$ and $\supp_{\llb
2i+2\rrb}([\ralpha]) = \{t,u\}$ with $m(s,t)=3=m(t,u)$, then there exists a
link $\rsigma \in [\ralpha]$ with the property that $\rsigma_{\llb
2i-1,2i+3\rrb} = tstut$.

Given a reduced expression $\ralpha$ consisting of at least two letters, let
$\hatralpha$ be the reduced expression obtained from $\ralpha$ by deleting the
rightmost two letters. If $\ralpha$ is a link, we should not expect
$\hatralpha$ to be a link. However, the next lemma indicates that we will
obtain a link if we delete the rightmost two letters from a carefully chosen
link. Certainly, we also have a ``left-handed" version of this lemma. There is
likely a generalized version of this lemma where we delete all the letters on
either side of the position where two braid shadows overlap, however, this
stronger version is not needed for our purposes.

\begin{lemma}\label{lem:delete last two letters from special link}
Suppose $(W, S)$ is a simply-laced triangle-free Coxeter system and $\ralpha$
is a link of rank $r \geq 2$ and choose $\rsigma\in[\ralpha]$ such that $\llb
2r-3,2r-1\rrb,\llb 2r-1,2r+1\rrb \in \bs(\rsigma)$ according to
Lemma~\ref{lem:structure of overlapping braids}. Then $\hatrsigma$ is a link of
rank $r-1$, and if $\rbeta\in [\ralpha]$ such that $\supp_{\llb
2r\rrb}(\rbeta)=\supp_{\llb 2r\rrb}(\rsigma)$, then $\hatrbeta\in
[\hatrsigma]$. Moreover, every element of $[\hatrsigma]$ is of the form
$\hatrbeta$ for some $\rbeta \in [\ralpha]$ satisfying $\supp_{\llb
2r\rrb}(\rbeta)=\supp_{\llb 2r\rrb}(\rsigma)$.
\end{lemma}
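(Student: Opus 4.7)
By the discussion following Lemma~\ref{lem:TeamJJJnew}, the hypotheses force $\rsigma_{\llb 2r-3, 2r+1\rrb}=tstut$ for generators with $m(s,t)=m(t,u)=3$ and $m(s,u)=2$, together with $\supp_{\llb 2r\rrb}([\ralpha])=\{t,u\}$ and $\rsigma_{\llb 2r\rrb}=u$. Set $A:=\{\rbeta\in[\ralpha]\mid \supp_{\llb 2r\rrb}(\rbeta)=\{u\}\}$; by Lemma~\ref{lem:firstlastletter}(b), every $\rbeta\in A$ ends in $ut$, so $\rbeta=\hatrbeta\cdot ut$. The plan is to show that truncation $\rbeta\mapsto\hatrbeta$ restricts to a bijection between $A$ and $[\hatrsigma]$; once this is in hand, all three assertions of the lemma will follow.

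First I would establish the surjective half (the ``moreover'' clause). Given $\hatrgamma\in[\hatrsigma]$, take any sequence of braid moves converting $\hatrsigma$ into $\hatrgamma$. Each move acts on some position $\llb i,i+2\rrb$ with $i+2\leq 2r-1$, so the identical sequence applied to $\rsigma=\hatrsigma\cdot ut$ produces a chain of length-$(2r+1)$ words terminating at $\hatrgamma\cdot ut$; every intermediate word represents the same group element as $\rsigma$ with the same length, hence is reduced. Therefore $\hatrgamma\cdot ut\in A\cap[\ralpha]$ and truncates to $\hatrgamma$.

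Next I would address the harder direction: for $\rbeta\in A$, show $\hatrbeta\in[\hatrsigma]$. The idea is to produce a braid-move sequence from $\rsigma$ to $\rbeta$ that never invokes the move at $\llb 2r-1,2r+1\rrb$, the only move able to alter the letter at position $2r$. Deleting the last two letters $ut$ from each term of such an $A$-confined sequence then yields a legitimate braid-move sequence from $\hatrsigma$ to $\hatrbeta$. To build the $A$-confined sequence, I would select a sequence from $\rsigma$ to $\rbeta$ minimizing the number of $\llb 2r-1,2r+1\rrb$-moves; since both endpoints lie in $A$, this count is even. If it were positive, the first two such moves would bracket a ``$B$-excursion'' whose intervening moves are confined to positions $\llb 2i-1,2i+1\rrb$ with $i\leq r-1$, leaving positions $2r,2r+1$ untouched. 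Combining Proposition~\ref{prop:hugh} (forbidding adjacent braid shadows), Proposition~\ref{prop:equalsupport} (constancy of supports at braid shadows), and the explicit $tstut$-pattern at the right end of $\rsigma$, a local commutation/exchange argument would rewrite this excursion into a sequence in $A$ that skips both $\llb 2r-1,2r+1\rrb$-moves, contradicting the choice of minimum.

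Finally, the assertion that $\hatrsigma$ is a link of rank $r-1$ falls out of the bijection. Clearly $\hatrsigma$ is reduced of length $2r-1$, and $\hatrsigma_{\llb 2r-3,2r-1\rrb}=tst$ gives $\llb 2r-3,2r-1\rrb\in\bs(\hatrsigma)$. For each $1\leq i<r-1$, Lemma~\ref{lem:structure of overlapping braids} applied to $[\ralpha]$ supplies some $\rsigma'\in[\ralpha]$ with $\llb 2i-1,2i+1\rrb\in\bs(\rsigma')$, and the same rewriting trick modifies $\rsigma'$ into an element of $A$ whose truncation witnesses $\llb 2i-1,2i+1\rrb\in\bs([\hatrsigma])$. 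The reverse containment $\bs([\hatrsigma])\subseteq\{\llb 2i-1,2i+1\rrb:1\leq i\leq r-1\}$ is forced by length and by the fact that every element of $[\hatrsigma]$ lifts to an element of $A\subseteq[\ralpha]$, whose braid shadows are already constrained. I expect the main obstacle to be the exchange argument that eliminates pairs of $\llb 2r-1,2r+1\rrb$-moves: the positions $\llb 2r-3,2r-1\rrb$ and $\llb 2r-1,2r+1\rrb$ share the index $2r-1$, so the associated braid moves do not straightforwardly commute, and the rewriting must carefully track their interaction via the $tstut$-structure.
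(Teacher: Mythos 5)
Your route is the same as the paper's: restrict to your set $A$ of links agreeing with $\rsigma$ in position $2r$, note via Lemma~\ref{lem:firstlastletter} that each of them ends in $ut$, and try to show that truncation bijects $A$ with $[\hatrsigma]$. Your surjectivity argument (append $ut$ to a braid-move sequence for $[\hatrsigma]$ and observe that every intermediate word is reduced) is essentially the paper's, which invokes Proposition~\ref{prop:matsumoto} at that point. The genuine gap is the other direction. Everything there rests on the claim that some braid-move sequence from $\rsigma$ to $\rbeta$ never uses the shadow $\llb 2r-1,2r+1\rrb$, and your justification stops at the decisive moment: the ``local commutation/exchange argument'' that is supposed to delete a bracketed pair of right-end moves (moves applied in $\llb 2r-1,2r+1\rrb$) is only asserted to exist, and you yourself flag it as the unresolved main obstacle. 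Since your proofs of the rank statement and of $\bs([\hatrsigma])$ also lean on this rewriting trick, the proposal as written does not establish the lemma. (For comparison, the paper is equally terse here: it simply asserts that a minimal sequence from $\rsigma$ to $\rbeta$ does not involve the rightmost braid shadow.)

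The missing step is closable, and the interaction you feared between the overlapping shadows $\llb 2r-3,2r-1\rrb$ and $\llb 2r-1,2r+1\rrb$ in fact cannot occur inside an excursion. By Proposition~\ref{prop:hugh}, neither $\llb 2r-4,2r-2\rrb$ nor $\llb 2r-2,2r\rrb$ lies in $\bs([\ralpha])$, so position $2r-2$ can only be changed by a move in $\llb 2r-3,2r-1\rrb$, and position $2r-1$ only by a move in $\llb 2r-3,2r-1\rrb$ or $\llb 2r-1,2r+1\rrb$. Immediately before the first right-end move the word carries $s\,t\,u\,t$ in positions $2r-2,\ldots,2r+1$ (position $2r-1$ must equal $t$ for the move to apply, and then position $2r-2$ must be $s$ because the word is reduced and $\supp_{\llb 2r-2\rrb}([\ralpha])=\{s,t\}$); immediately after, it carries $s\,u\,t\,u$ there. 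Since $m(s,u)=2$ by triangle-freeness, the shadow $\llb 2r-3,2r-1\rrb$ is not available, and it cannot become available before the next right-end move because nothing else can alter positions $2r-2$ and $2r-1$. Hence every move strictly between two consecutive right-end moves acts on positions at most $2r-3$; its applicability and effect do not depend on the letters in positions $2r-2,\ldots,2r+1$, so deleting the two flanking, mutually inverse right-end moves leaves a valid sequence from $\rsigma$ to $\rbeta$ with two fewer right-end moves, contradicting your minimal choice. With this inserted, your argument, including the deduction that $\hatrsigma$ is a link of rank $r-1$ (which the paper dismisses as easily seen), goes through.
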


\begin{proof}
The fact that $\hatrsigma$ is a link of rank $r-1$ is easily seen.  Let
$\rbeta\in [\ralpha]$ such that $\supp_{\llb 2r\rrb}(\rbeta)=\supp_{\llb
2r\rrb}(\rsigma)$.  It follows from Lemma~\ref{lem:firstlastletter} that
$\rbeta_{\llb 2r,2r+1\rrb}=\rsigma_{\llb 2r,2r+1\rrb}$.  Any minimal sequence
of braid moves that transforms $\rsigma$ into $\rbeta$ does not involve the
generators appearing in the rightmost braid shadow, and hence the same sequence
of braid moves will transform $\hatrsigma$ into $\hatrbeta$.  Thus,
$\hatrbeta\in [\hatrsigma]$. On the other hand, suppose that $\rsigma_{\llb
2r,2r+1\rrb} = ut$ and let $\rgamma \in [\hatrsigma]$. Then there is a sequence
of braid moves that transforms $\hatrsigma$ into $\rgamma$. Set $\rbeta :=
\rgamma ut$ so that $\rgamma = \hatrbeta$. When applied to $\rsigma$ instead of
$\hatrsigma$, this sequence of braid moves transforms $\rsigma$ into $\rbeta$.
It follows from Proposition~\ref{prop:matsumoto} that $\rbeta$ is reduced and
hence $\rbeta \in [\ralpha]$. Clearly, $\supp_{\llb 2r\rrb}(\rbeta) = \{u\} =
\supp_{\llb 2r\rrb}(\rsigma)$, which completes the proof.
\end{proof} 

Our next lemma has the important conclusion that every link is uniquely
determined by the generators appearing at the even indices of the word.
Moreover, every factor that begins and ends at an even index is uniquely
determined by the even-index generators that appear in the factor.
 
\begin{lemma}\label{lem:equalexp}
Suppose $(W, S)$ is a simply-laced triangle-free Coxeter system and let
$\ralpha$ and $\rbeta$ be two braid equivalent links of rank $r\geq 1$. Then
$\ralpha = \rbeta$ if and only if $\ralpha_{\llb 2i\rrb}=\rbeta_{\llb 2i\rrb}$
for all $1\leq i\leq r$.
\end{lemma}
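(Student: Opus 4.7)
The forward direction is immediate: if $\ralpha = \rbeta$ then certainly they agree position by position. So the work is in the converse, where we assume $\ralpha_{\llb 2i\rrb}=\rbeta_{\llb 2i\rrb}$ for all $1\leq i\leq r$ and must show that the odd-position letters agree as well, i.e., $\ralpha_{\llb 2i-1\rrb}=\rbeta_{\llb 2i-1\rrb}$ for all $1\leq i\leq r+1$.

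The plan is to split the odd positions into two types: the two boundary positions $1$ and $2r+1$, and the interior odd positions $2i+1$ with $1\leq i\leq r-1$ (this second collection is empty when $r=1$, which forms a trivial base case). For each interior odd position, I will appeal to Lemma~\ref{lem:TeamJJJnew}, which (invoked with Remark~\ref{rem:support for even positions}) gives generators $s,t,u$ with $m(s,t)=3=m(t,u)$ such that $\supp_{\llb 2i\rrb}([\ralpha])=\{s,t\}$, $\supp_{\llb 2i+2\rrb}([\ralpha])=\{t,u\}$, and crucially tells us that $\ralpha_{\llb 2i\rrb}\neq \ralpha_{\llb 2i+2\rrb}$ and $\ralpha_{\llb 2i+1\rrb}$ is the unique element of $\{s,t,u\}\setminus\{\ralpha_{\llb 2i\rrb},\ralpha_{\llb 2i+2\rrb}\}$. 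Since $\ralpha$ and $\rbeta$ are braid equivalent, $\supp_{\llb 2i\rrb}([\ralpha])=\supp_{\llb 2i\rrb}([\rbeta])$ (they are the same braid class), so $\rbeta_{\llb 2i+1\rrb}$ is the same unique element. Combined with the hypothesis $\ralpha_{\llb 2i\rrb}=\rbeta_{\llb 2i\rrb}$ and $\ralpha_{\llb 2i+2\rrb}=\rbeta_{\llb 2i+2\rrb}$, this forces $\ralpha_{\llb 2i+1\rrb}=\rbeta_{\llb 2i+1\rrb}$.

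For the boundary position $1$, I will apply Lemma~\ref{lem:firstlastletter}(a): writing $\supp_{\llb 2\rrb}([\ralpha])=\{s,t\}$ with $m(s,t)=3$ (which is the same for $\rbeta$ by braid equivalence), the lemma says $\ralpha_{\llb 1,2\rrb}$ is either $st$ or $ts$, and likewise for $\rbeta_{\llb 1,2\rrb}$. Since we are given $\ralpha_{\llb 2\rrb}=\rbeta_{\llb 2\rrb}$, the position $1$ letter is forced to be the other element of $\{s,t\}$ for both, hence $\ralpha_{\llb 1\rrb}=\rbeta_{\llb 1\rrb}$. Position $2r+1$ is handled by the symmetric statement Lemma~\ref{lem:firstlastletter}(b), using $\ralpha_{\llb 2r\rrb}=\rbeta_{\llb 2r\rrb}$. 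Combining all three cases completes the converse.

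No step is really an obstacle here — all the structural work was done in building the preceding lemmas. The only subtlety worth noting explicitly is the $r=1$ case, where there are no interior odd positions and the result reduces to applying Lemma~\ref{lem:firstlastletter} (both parts apply to the same braid shadow $\llb 1,3\rrb$), consistent with the fact that the only braid move on $sts$ produces $tst$, and specifying the middle letter does determine the word.
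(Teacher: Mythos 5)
Your proposal is correct and follows essentially the same route as the paper: the interior odd positions are pinned down by Lemma~\ref{lem:TeamJJJnew} (the odd-position letter is the unique element of $\{s,t,u\}$ not equal to its even-position neighbors, and the supports agree since $[\ralpha]=[\rbeta]$), and the two boundary letters are then forced by Lemma~\ref{lem:firstlastletter}. Your write-up simply makes explicit the details the paper's terse proof leaves implicit, including the trivial $r=1$ case.
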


\begin{proof}
The forward implication is immediate. Conversely, assume that $\ralpha_{\llb
2i\rrb}=\rbeta_{\llb 2i\rrb}$ for all $1\leq i\leq r$. The fact that
$\ralpha_{\llb 2i+1\rrb}=\rbeta_{\llb 2i+1\rrb}$ for all $1\leq i\leq r-1$
follows from Lemma~\ref{lem:TeamJJJnew}. Thus, we have $\ralpha_{\llb
2,2r\rrb}=\rbeta_{\llb 2,2r\rrb}$. Applying Lemma~\ref{lem:firstlastletter}
then yields the desired conclusion.
\end{proof}

The following notation will be useful in the next lemma. If $\ralpha$ is a link
such that $\llb 2i-1,2i+1 \rrb \in \bs(\ralpha)$, then let $b_{\llb
2i-1,2i+1\rrb}(\ralpha)$ denote the link obtained from $\ralpha$ by applying
the braid move in positions $\llb 2i-1,2i+1 \rrb$.  We certainly have $b_{\llb
2i-1,2i+1\rrb}(\ralpha) \in [\ralpha]$ with $d(b_{\llb
2i-1,2i+1\rrb}(\ralpha),\ralpha) = 1$. Suppose $\ralpha$ is a link of rank
$r\geq 1$. Given any $\rgamma \in [\ralpha]$, we associate subsets $X_{\rgamma}
,Y_{\rgamma}\subseteq [\ralpha]$ defined as follows:
\[
\begin{array}{ccc}
X_{\rgamma} : = \{\rbeta \in [\ralpha] \mid \supp_{\llb 2r\rrb}(\rbeta) =
\supp_{\llb 2r\rrb}(\rgamma) \}  & \text{and} & Y_{\rgamma} : = \{\rbeta \in
[\ralpha] \mid \supp_{\llb 2r\rrb}(\rbeta) \neq \supp_{\llb 2r\rrb}(\rgamma)\}.
\end{array}
\]
The set $X_\rgamma$ is simply the collection of links in $[\ralpha]$ that share
the same letter as $\rgamma$ in the second position from the right while
$Y_\rgamma$ is the complement of $X_\rgamma$ relative to $[\ralpha]$.  In
particular, if $\supp_{\llb 2r\rrb}([\ralpha])=\{s,t\}$ and $\supp_{\llb
2r\rrb}(\rgamma)=\{s\}$, then every link in $X_{\rgamma}$ has $s$ in position
$2r$ while every link in $Y_{\rgamma}$ has $t$ in position $2r$. It follows
from Lemma~\ref{lem:firstlastletter} that if $\rbeta\in X_\rgamma$, then
$\rbeta_{\llb 2r,2r+1\rrb}=\rgamma_{\llb 2r,2r+1\rrb}$. That is, every pair of
links in $X_\rgamma$ agree on the last two letters.  Similarly, every pair of
links in $Y_\rgamma$ agree on the last two letters.

\begin{example}
Consider the links presented in Parts~(a) and~(c) of Example~\ref{ex:braid
graphs}. We see that $X_{\ralpha_3}=\{\ralpha_1, \ralpha_2, \ralpha_3\}$ and
$Y_{\ralpha_3}=\{\ralpha_4\}$ while $X_{\rgamma_4}=\{\rgamma_3,\rgamma_4,
\rgamma_5\}$ and $Y_{\rgamma_4}=\{\rgamma_1, \rgamma_2\}$.
\end{example}

\begin{lemma}\label{lem:partition of braid class for a link}
Suppose $(W,S)$ is a simply-laced triangle-free Coxeter system and $\ralpha$ is
a link of rank $r\geq 2$. Choose $\rsigma\in [\ralpha]$ such that $\llb
2r-3,2r-1\rrb,\llb 2r-1,2r+1\rrb \in \bs(\rsigma)$ according to
Lemma~\ref{lem:structure of overlapping braids} and let $X_\rsigma$ and
$Y_\rsigma$ be defined as above. Then
\begin{enumerate}[(a)]
\item $\{X_\rsigma,Y_\rsigma\}$ is a partition of $[\ralpha]$. 
\item If $\rbeta \in X_\rsigma$, then $\hatrbeta \in [\hatrsigma]$. Moreover,
every element of $[\hatrsigma]$ is of the form $\hatrbeta$ for some $\rbeta \in
X_\rsigma$.
\item If $\rbeta \in Y_\rsigma$, then $\llb 2r-1,2r+1\rrb \in \bs(\rbeta)$ and
$\left(b_{\llb 2r-1,2r+1\rrb}(\rbeta)\right)_{\llb 1,2r-1\rrb} \in
[\hatrsigma]$.
\end{enumerate}
\end{lemma}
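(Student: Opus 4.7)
The plan is to handle the three parts in sequence. Part (b) is an immediate restatement of Lemma~\ref{lem:delete last two letters from special link}, so nothing new is required there. The substance lies in parts (a) and (c), with (c) doing the bulk of the work.

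For part (a), I would note that $\llb 2r-1,2r+1\rrb \in \bs(\rsigma) \subseteq \bs([\ralpha])$, so by Proposition~\ref{prop:core} we have $\supp_{\llb 2r\rrb}([\ralpha]) = \supp_{\llb 2r-1,2r+1\rrb}(\rsigma)$, a set of cardinality exactly two. Thus for every $\rgamma \in [\ralpha]$, the singleton $\supp_{\llb 2r\rrb}(\rgamma)$ either agrees with $\supp_{\llb 2r\rrb}(\rsigma)$ or equals the other element, so $\{X_\rsigma, Y_\rsigma\}$ covers $[\ralpha]$ disjointly. Both sets are nonempty: $\rsigma \in X_\rsigma$, while applying the braid move at $\llb 2r-1,2r+1\rrb$ to $\rsigma$ toggles the letter at position $2r$ and produces an element of $Y_\rsigma$.

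For part (c), the setup fixes the notation $\rsigma_{\llb 2r-3,2r+1\rrb} = tstut$, where $m(s,t)=m(t,u)=3$ (and $m(s,u)=2$ by triangle-freeness). Using Remark~\ref{rem:support for even positions} and Lemma~\ref{lem:TeamJJJnew} at index $i = r-1$, I would record that $\supp_{\llb 2r-2\rrb}([\ralpha]) = \{s,t\}$, $\supp_{\llb 2r\rrb}([\ralpha]) = \{t,u\}$, and $\supp_{\llb 2r-1\rrb}([\ralpha]) = \{s,t,u\}$; moreover, for every $\rgamma \in [\ralpha]$, $\rgamma_{\llb 2r-2\rrb} \neq \rgamma_{\llb 2r\rrb}$ and $\rgamma_{\llb 2r-1\rrb}$ is the unique element of $\{s,t,u\}\setminus\{\rgamma_{\llb 2r-2\rrb},\rgamma_{\llb 2r\rrb}\}$. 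For $\rbeta \in Y_\rsigma$ we have $\rbeta_{\llb 2r\rrb} = t$, which forces $\rbeta_{\llb 2r-2\rrb} = s$ and then $\rbeta_{\llb 2r-1\rrb} = u$. Lemma~\ref{lem:firstlastletter}(b) pins down $\rbeta_{\llb 2r+1\rrb} = u$, since $\rbeta_{\llb 2r,2r+1\rrb} \in \{tu, ut\}$ and $\rbeta_{\llb 2r\rrb}=t$. Hence $\rbeta_{\llb 2r-1,2r+1\rrb} = utu$ is a braid shadow, so $\llb 2r-1,2r+1\rrb \in \bs(\rbeta)$ as claimed.

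To conclude, applying this braid move sends $utu \mapsto tut$ in the three relevant positions, so $(b_{\llb 2r-1,2r+1\rrb}(\rbeta))_{\llb 2r\rrb} = u = \rsigma_{\llb 2r\rrb}$, placing $b_{\llb 2r-1,2r+1\rrb}(\rbeta) \in X_\rsigma$. Part (b) then tells us that the expression obtained from $b_{\llb 2r-1,2r+1\rrb}(\rbeta)$ by deleting its last two letters lies in $[\hatrsigma]$; since $b_{\llb 2r-1,2r+1\rrb}(\rbeta)$ has length $2r+1$, this truncation is precisely $(b_{\llb 2r-1,2r+1\rrb}(\rbeta))_{\llb 1,2r-1\rrb}$, which is exactly the required conclusion. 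The main obstacle is isolating the letter at position $2r-1$ of an arbitrary member of $Y_\rsigma$; triangle-freeness enters precisely here, through Lemma~\ref{lem:TeamJJJnew}, to guarantee it is the third generator $u$ rather than $s$ or $t$, after which the braid shadow and the reduction to part (b) fall out automatically.
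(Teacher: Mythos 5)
Your proposal is correct and follows essentially the same route as the paper: (a) is the trivial partition argument with nonemptiness witnessed by $\rsigma$ and the rightmost braid move, (b) is quoted from Lemma~\ref{lem:delete last two letters from special link}, and (c) pins down $\rbeta_{\llb 2r-2,2r+1\rrb}=sutu$ (via Lemma~\ref{lem:firstlastletter} and Lemma~\ref{lem:TeamJJJnew}), exhibits the braid shadow $\llb 2r-1,2r+1\rrb$, and lands in $X_\rsigma$ to invoke (b). The only cosmetic difference is that you cite the full conclusion of Lemma~\ref{lem:TeamJJJnew} to fix positions $2r-2$ and $2r-1$, where the paper runs a short case analysis using reducedness and $\llb 2r-2,2r\rrb\notin\bs([\ralpha])$.
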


\begin{proof}
Certainly, $X_\sigma$ and $Y_\sigma$ are disjoint and $X_\rsigma\cup
Y_\rsigma=[\ralpha]$. Moreover, $X_{\rsigma}$ is nonempty since $\rsigma\in
X_{\rsigma}$.  Applying the braid move occurring in the rightmost braid shadow
in $\rsigma$ results in a link in $Y_{\rsigma}$, and so $Y_{\rsigma}$ is also
nonempty.  Thus, $\{X_\rsigma,Y_\rsigma\}$ is a partition of $[\ralpha]$, which
verifies Part~(a). Part~(b) follows immediately from Lemma~\ref{lem:delete last
two letters from special link}.

Using Lemma~\ref{lem:structure of overlapping braids}, we can write
$\rsigma_{\llb 2r-3,2r+1\rrb} = tstut$, where $\supp_{\llb
2r-2\rrb}([\ralpha])=\{s,t\}$ and $\supp_{\llb 2r\rrb}([\ralpha])=\{t,u\}$ with
$m(s,t)=3=m(t,u)$. Let $\rbeta\in Y_{\rsigma}$, so that $\rbeta_{\llb
2r,2r+1\rrb}=tu$ by Lemma~\ref{lem:firstlastletter}. By applying
Lemma~\ref{lem:TeamJJJnew} with $i=r-1$, we have $\supp_{\llb
2r-1\rrb}([\ralpha]) = \{s,t,u\}$.  By considering all possibilities for
$\rbeta_{\llb 2r-2,2r+1\rrb}$ and using the fact that $\rbeta$ is reduced while
$\llb 2r-2,2r\rrb \notin \bs([\ralpha])$, we can conclude that $\rbeta_{\llb
2r-2,2r+1\rrb} = sutu$. Since $m(t,u) = 3$, it follows that $\llb 2r-1,2r+1\rrb
\in \bs(\rbeta)$.  But then $b_{\llb 2r-1,2r+1\rrb}(\rbeta) \in X_\rsigma$, and
hence $\left(b_{\llb 2r-1,2r+1\rrb}(\rbeta)\right)_{\llb 1,2r-1\rrb} \in
[\hatrsigma]$ by Part~(b). This verifies Part~(c).
\end{proof}

Suppose $\ralpha$ is a link of rank $r\geq 2$ and choose $\rsigma\in [\ralpha]$
such that $\llb 2r-3,2r-1\rrb,\llb 2r-1,2r+1\rrb \in \bs(\rsigma)$ according to
Lemma~\ref{lem:structure of overlapping braids}.  In light of Part~(c) of
Lemma~\ref{lem:partition of braid class for a link}, for $\rbeta\in
Y_{\rsigma}$, define $\checkrbeta:=\left(b_{\llb
2r-1,2r+1\rrb}(\rbeta)\right)_{\llb 1,2r-1\rrb}$.  That is, for $\rbeta\in
Y_{\rsigma}$, $\checkrbeta$ is the link we obtain in $[\hatrsigma]$ by first
applying the braid move available in the rightmost braid shadow of $\rbeta$ and
then deleting the two rightmost letters of the resulting expression.

An important corollary of the preceding lemma is that the braid graph for a
link $\ralpha$ is obtained by gluing together two induced subgraphs, one of
which is the image of an isometric embedding of $B(\hatrsigma)$ into
$B(\ralpha)$. In order to state this precisely, we need the following
definition.

\begin{definition}\label{def:braidgraph decomposition embeddings for a link}
Suppose $(W,S)$ is a simply-laced triangle-free Coxeter system and $\ralpha$ is
a link of rank $r\geq 2$. Choose $\rsigma\in [\ralpha]$ such that $\llb
2r-3,2r-1\rrb,\llb 2r-1,2r+1\rrb \in \bs(\rsigma)$ according to
Lemma~\ref{lem:structure of overlapping braids}, and suppose that
$\rsigma_{\llb 2r-3,2r+1\rrb} = tstut$, where $m(s,t)=3=m(t,u)$. Let
$\Omega_{\ralpha}^{\rsigma}:[\hatrsigma] \to [\ralpha]$ be the function that
conjoins the letters $ut$ on the right of each element of $[\hatrsigma]$.
\end{definition}

Note that the function $\Omega_{\ralpha}^{\rsigma}$ is well defined by
Lemma~\ref{lem:partition of braid class for a link}.

\begin{corollary}\label{cor:subgraph of a link}
Suppose $(W,S)$ is a simply-laced triangle-free Coxeter system and $\ralpha$ is
a link of rank $r\geq 2$ and assume the notation of
Definition~\ref{def:braidgraph decomposition embeddings for a link}. Then
\begin{enumerate}[(a)]
\item\label{cor:subgraph of a link a} The function
$\Omega_{\ralpha}^{\rsigma}:[\hatrsigma] \to [\ralpha]$ is an isometric
embedding from $B(\hatrsigma)$ to $B(\ralpha)$. In particular,
$\im(\Omega_{\ralpha}^{\rsigma}) = X_\rsigma$ and $B(\hatrsigma)$ is isomorphic
to the induced subgraph $B(\ralpha)[X_\rsigma]$.
\item\label{cor:subgraph of a link b} The induced subgraph
$B(\ralpha)[Y_\rsigma]$ is an isometric subgraph of $B(\ralpha)$.
\item\label{cor:subgraph of a link c} If $\rbeta \in X_\rsigma$ and  $\rgamma
\in Y_\rsigma$, then $d_{B(\ralpha)}(\rbeta ,\rgamma) = d_{B(\ralpha)}(\rbeta
,b_{\llb 2r-1,2r+1\rrb}(\rgamma)) + 1$.
\end{enumerate}  
\end{corollary}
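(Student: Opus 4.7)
The plan is to build a graph retraction $\pi\colon B(\ralpha)\to B(\ralpha)[X_\rsigma]$ that delivers (a) and (c) directly, and to attack (b) by induction on the rank $r$. I will define $\pi$ by $\pi(\rbeta)=\rbeta$ for $\rbeta\in X_\rsigma$ and $\pi(\rgamma)=b_{\llb 2r-1,2r+1\rrb}(\rgamma)$ for $\rgamma\in Y_\rsigma$, which is well defined by Lemma~\ref{lem:partition of braid class for a link}(c). The graph isomorphism claim in (a) follows quickly from Lemma~\ref{lem:partition of braid class for a link}(b): that lemma already supplies the bijection $[\hatrsigma]\leftrightarrow X_\rsigma$ realized by $\Omega_\ralpha^\rsigma$, and matching edges on the two sides is a direct check. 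Indeed, an edge of $B(\hatrsigma)$ arising from a braid at $\llb i,i+2\rrb$ lifts to a parallel edge in $B(\ralpha)[X_\rsigma]$ because appending $ut$ leaves positions $\le 2r-1$ undisturbed; conversely, an edge inside $X_\rsigma$ cannot be a braid at $\llb 2r-1,2r+1\rrb$ (such a move would flip the support at position $2r$), so it must arise from a move at $\llb i,i+2\rrb$ with $i+2\le 2r-1$ and descend to $B(\hatrsigma)$.

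Next I would verify that $\pi$ is a retraction. An edge arising from a braid at $\llb 2r-1,2r+1\rrb$ joins an element of $X_\rsigma$ to its image under $b_{\llb 2r-1,2r+1\rrb}$ in $Y_\rsigma$, and $\pi$ sends both endpoints to the same vertex, so this edge is contracted. Every other edge is at $\llb i,i+2\rrb$ with $i\le 2r-3$. In the sub-case $i\le 2r-5$ the positions $\{i,i+1,i+2\}$ and $\{2r-1,2r,2r+1\}$ are disjoint, so the two braid moves commute and $\pi$ sends the edge to a parallel edge in $B(\ralpha)[X_\rsigma]$. The only remaining sub-case $i=2r-3$ cannot occur for an edge inside $Y_\rsigma$, because Lemma~\ref{lem:partition of braid class for a link}(c) forces $\rgamma_{\llb 2r-2,2r-1\rrb}=su$ for every $\rgamma\in Y_\rsigma$ and $m(s,u)=2$ rules out the needed braid shadow at $\llb 2r-3,2r-1\rrb$. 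Since $\pi$ is a retraction, $B(\ralpha)[X_\rsigma]$ is automatically an isometric subgraph of $B(\ralpha)$, which completes (a). For (c), any path in $B(\ralpha)$ from $\rbeta\in X_\rsigma$ to $\rgamma\in Y_\rsigma$ must contain at least one edge arising from a braid at $\llb 2r-1,2r+1\rrb$ in order to change the $X_\rsigma/Y_\rsigma$ class, and $\pi$ collapses every such edge; hence $d_{B(\ralpha)}(\rbeta,b_{\llb 2r-1,2r+1\rrb}(\rgamma))=d_{B(\ralpha)}(\pi(\rbeta),\pi(\rgamma))\le d_{B(\ralpha)}(\rbeta,\rgamma)-1$, and the reverse inequality is immediate from the triangle inequality through $b_{\llb 2r-1,2r+1\rrb}(\rgamma)$.

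For (b) I would induct on $r$, with the trivial base case $r=2$ since $|Y_\rsigma|=1$. For the inductive step $r\ge 3$, let $X_\rsigma^{\mathrm{br}}$ denote the image $b_{\llb 2r-1,2r+1\rrb}(Y_\rsigma)\subseteq X_\rsigma$; the same case analysis as in the previous paragraph shows that $b_{\llb 2r-1,2r+1\rrb}$ promotes to a graph isomorphism $B(\ralpha)[Y_\rsigma]\cong B(\ralpha)[X_\rsigma^{\mathrm{br}}]$. Under the isomorphism $B(\hatrsigma)\cong B(\ralpha)[X_\rsigma]$ from (a), the set $X_\rsigma^{\mathrm{br}}$ corresponds to $\{\rtau\in[\hatrsigma]:\rtau_{\llb 2r-2\rrb}=s\}$, which is exactly one of the two pieces of the partition of $[\hatrsigma]$ appearing in the present corollary applied to $\hatrsigma$ with a suitable $\rsigma''\in[\hatrsigma]$ supplied by Lemma~\ref{lem:structure of overlapping braids} (this choice is available because $\hatrsigma$ has rank $r-1\ge 2$). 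The inductive hypothesis (parts (a) and (b) at rank $r-1$) then identifies this piece as an isometric subgraph of $B(\hatrsigma)$, hence of $B(\ralpha)[X_\rsigma]$, hence of $B(\ralpha)$. Chaining this isometry with the graph isomorphism and the retraction bound yields
\[
d_{B(\ralpha)[Y_\rsigma]}(\rgamma_1,\rgamma_2)=d_{B(\ralpha)}(b_{\llb 2r-1,2r+1\rrb}(\rgamma_1),b_{\llb 2r-1,2r+1\rrb}(\rgamma_2))\le d_{B(\ralpha)}(\rgamma_1,\rgamma_2),
\]
and the opposite inequality is automatic for an induced subgraph.

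The main obstacle will be (b). Unlike (a) and (c), there is no canonical retraction of $B(\ralpha)$ onto $B(\ralpha)[Y_\rsigma]$ mirroring $\pi$, because vertices of $X_\rsigma\setminus X_\rsigma^{\mathrm{br}}$ do not admit the braid move at $\llb 2r-1,2r+1\rrb$ needed to reach $Y_\rsigma$. The induction circumvents this by converting the isometry question for $X_\rsigma^{\mathrm{br}}$ inside $B(\ralpha)$ into an isometry question for one of the two analogous partition pieces inside the strictly smaller braid graph $B(\hatrsigma)$, where the corollary at rank $r-1$ applies.
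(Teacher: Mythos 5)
Your argument is correct, but it takes a genuinely different route from the paper's. The paper proves (a) by observing directly that appending $ut$ turns minimal braid sequences in $[\hatrsigma]$ into minimal sequences in $[\ralpha]$, proves (b) in essentially one line by asserting that a minimal sequence joining two elements of $Y_\rsigma$ never touches the shadow $\llb 2r-1,2r+1\rrb$ (so $B(\ralpha)[Y_\rsigma]$ is convex), and proves (c) by exhibiting a minimal sequence ending with the move $b_{\llb 2r-1,2r+1\rrb}$. You instead build the nonexpansive retraction $\pi$ that folds $Y_\rsigma$ onto $X_\rsigma$ along the rightmost braid move; this buys you (a) and (c) simultaneously and, in particular, makes explicit the minimality and ``distances do not shrink in the ambient graph'' steps that the paper leaves as assertions. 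The price is paid in (b): where the paper's convexity claim finishes immediately and without induction, you run an induction on the rank, transporting the question through the bijection $b_{\llb 2r-1,2r+1\rrb}\colon Y_\rsigma\to X^{\mathrm{br}}_\rsigma$ to one of the two partition pieces of $[\hatrsigma]$, where the corollary at rank $r-1$ applies. That induction does go through, but two small points deserve a sentence each in a final write-up: (i) the fact that $\rgamma_{\llb 2r-2,2r-1\rrb}=su$ for every $\rgamma\in Y_\rsigma$ is established in the \emph{proof} of Lemma~\ref{lem:partition of braid class for a link}(c) (or can be rederived from Lemmas~\ref{lem:firstlastletter} and~\ref{lem:TeamJJJnew}), not in its statement; and (ii) for $b_{\llb 2r-1,2r+1\rrb}$ to be a graph isomorphism onto $B(\ralpha)[X^{\mathrm{br}}_\rsigma]$ you also need that no edge at $\llb 2r-3,2r-1\rrb$ joins two elements of $X^{\mathrm{br}}_\rsigma$; this is not literally ``the same case analysis'' as for $Y_\rsigma$, but it follows at once because every element of $X^{\mathrm{br}}_\rsigma$ has $t$ in position $2r-1$, while a braid move in that shadow changes the letter in that position.
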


\begin{proof}
Using Lemma~\ref{lem:partition of braid class for a link}, choose
$\hatrbeta,\hatrgamma \in [\hatrsigma]$ for $\rbeta,\rgamma \in [\ralpha]$.
Then clearly
\[ 
d_{B(\hatrsigma)}(\hatrbeta, \hatrgamma) = d_{B(\ralpha)}(\hatrbeta ut,
\hatrgamma ut) = d_{B(\ralpha)}(\Omega_\ralpha^\rsigma(\hatrbeta),
\Omega_\ralpha^\rsigma(\hatrgamma)).
\]  
The statement $\im(\Omega_{\ralpha}^{\rsigma}) = X_\rsigma$ follows immediately
from Lemma~\ref{lem:partition of braid class for a link}(b). The image of an
isometric embedding is the subgraph induced by the image, and so $B(\hatrsigma)
\cong B(\ralpha)[X_\rsigma]$. This verifies Part~\ref{cor:subgraph of a link
a}.

Now, let $\rbeta,\rgamma \in Y_\rsigma$. Any minimal sequence of braid moves
that transforms $\rbeta$ into $\rgamma$ does not involve the braid shadow $\llb
2r-1,2r+1\rrb$. Therefore, every shortest path between $\rbeta$ and $\rgamma$
is contained in $B(\ralpha)[Y_\rsigma]$. Thus, $B(\ralpha)[Y_\sigma]$ is convex
and hence an isometric subgraph of $B(\ralpha)$. This proves
Part~\ref{cor:subgraph of a link b}.

Finally, let $\rbeta \in X_\sigma$ and $\rgamma \in Y_\rsigma$ and choose a
minimal sequence of braid moves $b_1,b_2,\ldots,b_k$ that transforms $\rbeta$
into $b_{\llb 2r-1,2r+1\rrb}(\rgamma)$. Note that the latter element is a
member of $X_{\rsigma}$ by Lemma~\ref{lem:partition of braid class for a
link}(c). Then $b_1,b_2,\ldots,b_k, b_{\llb 2r-1,2r+1\rrb}$ is a minimal
sequence of braid moves that transforms $\rbeta$ into $\rgamma$, and hence
\[
d_{B(\ralpha)}(\rbeta ,\rgamma) = k + 1 = d_{B(\ralpha)}(\rbeta ,b_{\llb
2r-1,2r+1\rrb}(\rgamma)) + 1.
\]
This completes the proof of Part~\ref{cor:subgraph of a link c}.
\end{proof}

Corollary~\ref{cor:subgraph of a link} explicitly states that the induced
subgraph $B(\ralpha)[X_\rsigma]$ is the braid graph for some link.  We
conjecture that the induced subgraph $B(\ralpha)[Y_\rsigma]$ is also the braid
graph for some link. We prove this conjecture for Fibonacci links in
Section~\ref{sec:fibonacci}.

\begin{example}\label{ex:braid graph with partition}
Consider the link $\ralpha = 32313435464$ in the simply-laced triangle-free
Coxeter system of type $\widetilde D_5$, whose Coxeter graph is shown in
Figure~\ref{fig:labeledgraphs}. One possible choice for a link satisfying
Definition~\ref{def:braidgraph decomposition embeddings for a link} is
$\rsigma= 32314345464$. The braid graph for $\ralpha$ is given in
Figure~\ref{fig:BoxWithTwoFlaps}. We have highlighted $B(\ralpha)[X_{\rsigma}]$
and $B(\ralpha)[Y_{\rsigma}]$ in \textcolor{green}{green} and
\textcolor{magenta}{magenta}, respectively.  In this case,
$\hatrsigma=323143454$, and in agreement with Corollary~\ref{cor:subgraph of a
link}, we have $B(\hatrsigma)\cong B(\ralpha)[X_{\rsigma}]$. We encountered the
braid graph for $B(\hatrsigma)$ in Example~\ref{ex:braid graphs}(d). Moreover,
we see that $B(\ralpha)[Y_{\rsigma}]$ happens to be isomorphic to the braid
graph for $3231343$, which is the link we obtain by deleting the last four
letters from $b_{\llb 9,11 \rrb}(\ralpha)=32313435646$. This is the braid graph
for $\rgamma_4$ that we saw in Example~\ref{ex:braid graphs}(c). Each of the
edges joining $B(\ralpha)[X_{\rsigma}]$ and $B(\ralpha)[Y_{\rsigma}]$
correspond to the braid move applied in the rightmost braid shadow.
\end{example}

\begin{figure}[ht!]
			\centering
\begin{tikzpicture}[every circle node/.style={draw, circle, inner sep=1.25pt}]
\node [circle] (1) [label=left:$\scriptstyle b_{\llb 9,11\rrb}(\ralpha){\ =\
}32313435646$\ \ ] at (0,1){};
				\node [circle] (2) at (.707,.293){};
				\node [circle] (3) at (-.707,1.707){};
\node [circle] (4) [label=right:$\; \scriptstyle 32313435464{\ =\ }{\ralpha}$]
at (.707,1.707){};
				\node [circle] (5) at (0,2.414){};
				\draw [rotate = 90, magenta,-, very thick] (1) to (2);
				\draw [magenta,-, very thick] (1) to (3);
				\draw [turq,-, very thick] (1) to (4);
				\draw [turq,-, very thick] (3) to (5);
				\draw [green,-, very thick] (5) to (4);
				\node [circle] (9) at (1.414,1){};
				
				\node [circle] (10) at (0,1.707){};
				\node [circle] (11) at (0,3.121){};
\node [circle] (13) [label=right:$\; \scriptstyle 32314345464{\ = \ }\rsigma$]
at (.707,2.414){};
				\node [circle] (14) at (-.707,2.414){};
				\node [circle] (15) at (1.414,3.121){};
				\node [circle] (16) at (.707,3.828){};
				\draw [magenta,-, very thick] (1) to (10);
				\draw [turq,-, very thick] (10) to (13);
				\draw [magenta,-, very thick] (10) to (14);
				\draw [turq,-, very thick] (14) to (11);
				\draw [green,-, very thick] (13) to (11);
				\draw [green,-, very thick] (13) to (4);
				\draw [green,-, very thick] (4) to (9);
				\draw [turq,-, very thick] (9) to (2);
				\draw [green,-, very thick] (5) to (11);
				\draw [magenta,-, very thick] (14) to (3);
				\draw [green,-, very thick] (15) to (16);
				\draw [green,-, very thick] (11) to (16);
				\draw [green,-, very thick] (15) to (13);
\begin{pgfonlayer}{background}
\highlight{8pt}{green}{(9.center) to (4.center) to (5.center) to (11.center) to
(16.center) to (15.center) to (13.center) to (11.center)}
\highlight{8pt}{green}{(13.center) to (4.center)}
\highlight{8pt}{magenta}{(2.center) to (1.center) to (3.center) to (14.center)
to (10.center) to (1.center)}
\end{pgfonlayer}
			\end{tikzpicture}
\caption{Braid graph for the reduced expression in Example~\ref{ex:braid graph
with partition} together with a partition of the vertices according to
Lemma~\ref{lem:partition of braid class for a link}.}
	\label{fig:BoxWithTwoFlaps}
\end{figure}
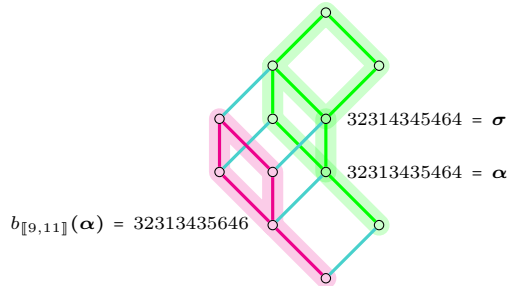

Recall that the crux of this section is to prove that every link can be
isometrically embedded in a hypercube whose dimension is at most the rank of
the link. We utilize the following definition for this purpose.

\begin{definition}\label{def:embedding}
Suppose $(W,S)$ is a simply-laced triangle-free Coxeter system and let
$\ralpha$ be a link of rank $r\geq 0$. For $r\geq 1$, define
$\Phi_{\ralpha}:[\ralpha] \to \{0,1\}^r$ via
$\Phi_{\ralpha}(\rbeta)=a_1a_2\cdots a_r$, where
\[
a_k = \begin{cases} 0, & \text{if} \  \supp_{\llb 2k\rrb}(\rbeta) = \supp_{\llb
2k\rrb}(\ralpha)\\
1, & \text{if} \ \supp_{\llb 2k\rrb}(\rbeta) \neq \supp_{\llb 2k\rrb}(\ralpha).
\end{cases}
\]
In the case that $r = 0$, the unique element (consisting of a single letter) in
the braid class is mapped to the empty binary string.
\end{definition}

It is worth pointing out that the definition of the map $\Phi_{\ralpha}$
depends on the choice of link. Choosing a different representative of
$[\ralpha]$ will necessarily result in a different mapping. However, any two
such embeddings differ only by an automorphism of the hypercube, cf.
Lemma~\ref{lem:isometric embedding lemma}.

\begin{lemma}\label{lem:isometric embedding lemma} 
Suppose $(W,S)$ is a simply-laced triangle-free Coxeter system. If $\ralpha$
and $\rbeta$ are braid equivalent links of rank $r\geq 0$, then there is an
automorphism $F:\{0,1\}^r\to \{0,1\}^r$ of the hypercube $Q_r$ satisfying
$F\circ \Phi_\ralpha = \Phi_\rbeta$. In particular, $\Phi_\ralpha$ is an
isometric embedding if and only if $\Phi_\rbeta$ is also an isometric embedding.
\end{lemma}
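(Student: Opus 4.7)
My plan is to exhibit $F$ explicitly as a bit-flip automorphism. First I would use Remark~\ref{rem:support for even positions} (via Proposition~\ref{prop:core}) to note that for every $1\leq k\leq r$ the set $\supp_{\llb 2k\rrb}([\ralpha])$ has exactly two elements, so the $k$-th bit of $\Phi_{\ralpha}(\rgamma)$ merely records which of those two generators appears at position $2k$ of $\rgamma$, encoded relative to the choice of $\ralpha$ as the ``zero'' reference. Changing the reference from $\ralpha$ to $\rbeta$ therefore should amount to flipping exactly those bits at positions where $\ralpha$ and $\rbeta$ themselves disagree.

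Motivated by this, I would set $c := \Phi_{\ralpha}(\rbeta) = c_1c_2\cdots c_r$ and define
\[
F(a_1a_2\cdots a_r) := (a_1\oplus c_1)(a_2\oplus c_2)\cdots(a_r\oplus c_r),
\]
where $\oplus$ denotes addition modulo $2$. Since XOR-ing with a fixed string is a self-inverse bijection on $\{0,1\}^r$ that preserves Hamming distance, $F$ is an automorphism of $Q_r$. In the degenerate case $r=0$, the statement is vacuous and $F$ is taken to be the identity on the singleton $\{0,1\}^0$.

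Next I would verify $F\circ \Phi_{\ralpha} = \Phi_{\rbeta}$ by a coordinatewise case analysis. Fix $\rgamma\in[\ralpha]$ and a position $k\in\{1,\dots,r\}$. If $c_k=0$, then $\ralpha$ and $\rbeta$ carry the same generator at position $2k$, so $\rgamma$ matches $\ralpha$ there iff it matches $\rbeta$ there; thus the $k$-th bits of $\Phi_{\ralpha}(\rgamma)$ and $\Phi_{\rbeta}(\rgamma)$ coincide, in agreement with XOR-ing by $0$. If $c_k=1$, then the two allowed generators at position $2k$ are partitioned oppositely by the two references (using that $\supp_{\llb 2k\rrb}([\ralpha])$ has cardinality two), so the $k$-th bits of $\Phi_{\ralpha}(\rgamma)$ and $\Phi_{\rbeta}(\rgamma)$ are complementary, in agreement with XOR-ing by $1$.

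For the final sentence, note that graph automorphisms of $Q_r$ are isometries of its geodesic metric. Hence if $\Phi_{\ralpha}$ is an isometric embedding, then $\Phi_{\rbeta}=F\circ\Phi_{\ralpha}$ is a composition of isometries and is itself isometric; the converse follows symmetrically using $F^{-1}$, which is again a bit-flip automorphism. I do not anticipate a genuine obstacle here: once the two-element support fact at even positions is in hand, the argument is essentially the observation that $\Phi_{\ralpha}$ depends on the choice of link representative only through a global relabeling ``which letter gets called $0$'' at each even position.
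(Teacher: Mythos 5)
Your proposal is correct and follows essentially the same route as the paper: the paper defines $F$ as bitwise XOR with $\Phi_{\rbeta}(\ralpha)$ (which equals your $\Phi_{\ralpha}(\rbeta)$, since each bit of either string records whether $\ralpha$ and $\rbeta$ agree at the corresponding even position) and then verifies $F\circ\Phi_{\ralpha}=\Phi_{\rbeta}$ by the same coordinatewise case analysis, with the two-element support fact at even positions used implicitly where you invoke it explicitly.
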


\begin{proof}
Define $F:\{0,1\}^r \to \{0,1\}^r$ via $F(\a) = \a + \Phi_\rbeta(\ralpha)$,
where $+$ denotes the bitwise XOR operation. It is clear that $F$ is an
automorphism of $Q_r$. Now, let $\rgamma\in [\ralpha]$. We need to prove that
$\Phi_\rbeta(\rgamma) = \Phi_\ralpha(\rgamma) + \Phi_\rbeta(\ralpha)$. Write
$\Phi_\rbeta (\rgamma) = a_1a_2\cdots a_r$, $\Phi_\ralpha(\rgamma) =
b_1b_2\cdots b_r$, and $\Phi_\rbeta(\ralpha) = c_1c_2\cdots c_r$. By definition
of the bitwise XOR operation, it suffices to show that $b_i + c_i \equiv a_i
\pmod{2}$ for all $i\in\{1,\ldots,r\}$. We proceed on a case-by-case basis.
	
First, suppose $b_i + c_i \equiv 0 \pmod 2$. Then $b_i = c_i$. If $b_i = 0
=c_i$, then $\supp_{\llb 2i\rrb}(\rgamma) = \supp_{\llb 2i\rrb}(\ralpha)$ and
$\supp_{\llb 2i\rrb}(\ralpha) = \supp_{\llb 2i\rrb}(\rbeta)$ by definitions of
$\Phi_{\ralpha}$ and $\Phi_{\rbeta}$, respectively. Hence $\supp_{\llb
2i\rrb}(\rgamma) = \supp_{\llb 2i\rrb}(\rbeta)$, so that $a_i = 0$, as well. On
the other hand, if $b_i =1= c_i$, then $\supp_{\llb 2i\rrb}(\rgamma) \neq
\supp_{\llb 2i\rrb}(\ralpha)$ and $\supp_{\llb 2i\rrb}(\ralpha) \neq
\supp_{\llb 2i\rrb}(\rbeta)$. It follows that $\supp_{\llb 2i\rrb}(\rgamma) =
\supp_{\llb 2i\rrb}(\rbeta)$, and so $a_i = 0$.
		
Next, suppose that  $b_i + c_i \equiv 1 \pmod 2$, so that $b_i \neq c_i$. If
$b_i = 0$ while $c_i = 1$, then $\supp_{\llb 2i\rrb}(\rgamma) = \supp_{\llb
2i\rrb}(\ralpha)$ while $\supp_{\llb 2i\rrb}(\ralpha) \neq  \supp_{\llb
2i\rrb}(\rbeta)$. This implies that $\supp_{\llb 2i\rrb}(\rgamma) \neq
\supp_{\llb 2i\rrb}(\rbeta)$, and so $a_i = 1$. Similarly, if $b_i=1$ while
$c_i =0$, then $\supp_{\llb 2i\rrb}(\rgamma) \neq \supp_{\llb 2i\rrb}(\ralpha)$
while $\supp_{\llb 2i\rrb}(\ralpha) =  \supp_{\llb 2i\rrb}(\rbeta)$. Thus,
$\supp_{\llb 2i\rrb}(\rgamma)\neq \supp_{\llb 2i\rrb}(\rbeta)$, so that $a_i =
1$.

This proves that $F$ is an automorphism of $Q_r$ having the desired property.
The second claim follows immediately.
\end{proof}

We now prove the following proposition, which shows that the braid graph for
every link is a partial cube with isometric dimension at most the rank of the
link.

\begin{proposition}\label{prop:braid graph for a link is a partial cube}
If $(W,S)$ is a simply-laced triangle-free Coxeter system and $\ralpha$ is a
link of rank $r\geq 0$, then the map $\Phi_{\ralpha}$ given in
Definition~\ref{def:embedding} is an isometric embedding of $B(\ralpha)$ into
$Q_r$. In particular, the braid graph for a link is a partial cube with
isometric dimension at most $r$.
\end{proposition}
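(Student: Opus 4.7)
The plan is to proceed by strong induction on the rank $r$ of the link. The base cases $r=0$ and $r=1$ are immediate: when $r=0$, both $B(\ralpha)$ and $Q_0$ are single vertices; when $r=1$, the braid class consists of two expressions $sts, tst$ joined by an edge, which $\Phi_\ralpha$ sends to $0$ and $1$ in $Q_1$.

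For the inductive step with $r \geq 2$, I would appeal to Lemma~\ref{lem:isometric embedding lemma} to select a convenient representative. Using Lemma~\ref{lem:structure of overlapping braids}, choose $\rsigma \in [\ralpha]$ with $\llb 2r-3, 2r-1\rrb, \llb 2r-1, 2r+1\rrb \in \bs(\rsigma)$ and write $\rsigma_{\llb 2r-3, 2r+1\rrb} = tstut$. Lemma~\ref{lem:partition of braid class for a link} partitions $[\ralpha] = X_\rsigma \sqcup Y_\rsigma$. Injectivity of $\Phi_\rsigma$ is immediate from Lemma~\ref{lem:equalexp}, and the inequality $d_{Q_r}(\Phi_\rsigma(\rbeta), \Phi_\rsigma(\rgamma)) \leq d_{B(\ralpha)}(\rbeta, \rgamma)$ is immediate since each braid move flips a single coordinate of $\Phi_\rsigma$. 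An easy bookkeeping step will show that $\Phi_\rsigma(\rbeta)$ is $\Phi_{\hatrsigma}(\hatrbeta)$ with a $0$ appended when $\rbeta \in X_\rsigma$, and is $\Phi_{\hatrsigma}(\checkrgamma)$ with a $1$ appended when $\rgamma \in Y_\rsigma$.

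To establish the reverse distance inequality, I would split into three cases. If both endpoints lie in $X_\rsigma$, Corollary~\ref{cor:subgraph of a link}(a) identifies the distance in $B(\ralpha)$ with the distance in $B(\hatrsigma)$, which the inductive hypothesis (applied to the rank-$(r-1)$ link $\hatrsigma$) equates to the Hamming distance of $\Phi_{\hatrsigma}$-images. If one endpoint lies in $X_\rsigma$ and the other in $Y_\rsigma$, Corollary~\ref{cor:subgraph of a link}(c) reduces to the first case plus one extra step, which is precisely the differing last bit.

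The main obstacle will be the case when both $\rgamma_1, \rgamma_2$ lie in $Y_\rsigma$. The key observation will be that the proof of Lemma~\ref{lem:partition of braid class for a link}(c) forces the last four letters of every element of $Y_\rsigma$ to be $sutu$; in particular, each $\rgamma_i$ has $s$ at position $2r-2$ and so the $(r-1)$st coordinate of $\Phi_{\hatrsigma}(\checkrgamma_i)$ vanishes for $i=1,2$. By the inductive hypothesis applied to $\hatrsigma$, there is a minimal path in $B(\hatrsigma)$ from $\checkrgamma_1$ to $\checkrgamma_2$ of length $h$ equal to the Hamming distance between their $\Phi_{\hatrsigma}$-images. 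Because such a minimal path must flip exactly the $h$ bits on which the two images differ, and the $(r-1)$st bit is shared, no move on the path can act at the window $\llb 2r-3, 2r-1\rrb$; every move instead acts within positions $\llb 1, 2r-3\rrb$. Since $\rgamma_1$ and $\checkrgamma_1$ agree letter-for-letter on $\llb 1, 2r-2\rrb$, the same sequence of braid moves can be applied to $\rgamma_1$ inside $B(\ralpha)$; the outcome remains inside $Y_\rsigma$ throughout (the suffix $utu$ is undisturbed) and terminates at $\rgamma_2$, yielding the required path of length $h$ in $B(\ralpha)$ and completing the induction.
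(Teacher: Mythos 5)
Your proposal is correct and follows essentially the same route as the paper: reduce to $\Phi_\rsigma$ via Lemma~\ref{lem:isometric embedding lemma}, choose $\rsigma$ by Lemma~\ref{lem:structure of overlapping braids}, induct on rank using the partition $X_\rsigma\sqcup Y_\rsigma$ of Lemma~\ref{lem:partition of braid class for a link}, identify $\Phi_\rsigma(\rbeta)$ with $\Phi_{\hatrsigma}$ of the truncation followed by a final bit, and handle the mixed and $X_\rsigma$ cases exactly as in the paper via Corollary~\ref{cor:subgraph of a link}. The only divergence is the $Y_\rsigma$--$Y_\rsigma$ case, where the paper tersely invokes Corollary~\ref{cor:subgraph of a link}\ref{cor:subgraph of a link b} while you instead lift a geodesic from $B(\hatrsigma)$ using the shared $(r-1)$st coordinate to rule out moves in the shadow $\llb 2r-3,2r-1\rrb$; this is sound and in fact supplies the detail the paper leaves implicit.
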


\begin{proof}
We proceed by induction on $r$. The result is immediate if $r\in\{0,1\}$. Now,
suppose that $r\geq 2$ and assume that the map
$\Phi_\rbeta:[\rbeta]\to\{0,1\}^{r-1}$ is an isometric embedding  of
$B(\rbeta)$ into $Q_{r-1}$ for every link $\rbeta$ of rank $r-1$. Let $\ralpha$
be a link of rank $r$. Choose $\rsigma\in[\ralpha]$ such that $\llb
2r-3,2r-1\rrb,\llb 2r-1,2r+1\rrb \in \bs(\rsigma)$ according to
Lemma~\ref{lem:structure of overlapping braids}. Then by Lemma~\ref{lem:delete
last two letters from special link}, $\hatrsigma$ is a link of rank $r-1$.
Applying the inductive hypothesis allows us to conclude that
$\Phi_{\hatrsigma}:[\hatrsigma]\to \{0,1\}^{r-1}$ is an isometric embedding of
$B(\hatrsigma)$ into $Q_{r-1}$. Now, utilizing Lemma~\ref{lem:partition of
braid class for a link}, define $\Phi:[\ralpha]\to \{0,1\}^r$ via
\[
\Phi(\rbeta) := \begin{cases}
\Phi_{\hatrsigma}(\hatrbeta)0, & \text{if } \rbeta \in X_{\rsigma}\\
\Phi_{\hatrsigma}(\checkrbeta)1, & \text{if } \rbeta \in Y_{\rsigma}.
\end{cases}
\]
Note that the map $\Phi$ is appending a 0 or 1 to the appropriate binary string
depending on whether the input is in $X_{\rsigma}$ or $Y_{\rsigma}$,
respectively. It is not hard to see that $\Phi = \Phi_\rsigma$. According to
Lemma~\ref{lem:isometric embedding lemma}, it suffices to prove that $\Phi$ is
an isometric embedding. Suppose $\supp_{\llb 2r\rrb}([\ralpha]) = \{s,t\}$ and
$\rsigma_{\llb 2r\rrb}=s$, where $m(s,t)=3$. There are three cases to consider.

First, suppose $\rbeta,\rgamma \in X_\rsigma$. Then $\rbeta = \hatrbeta st$ and
$\rgamma =\hatrgamma st$ according to Lemma~\ref{lem:firstlastletter}(b). Then
we have
\begin{align*}
d_{B(\ralpha)}(\rbeta,\rgamma) & = d_{B(\ralpha)}\left(\hatrbeta st, \hatrgamma
st\right)\\
& = d_{B(\hatrsigma)}\left(\hatrbeta, \hatrgamma\right) &
(\text{Corollary~\ref{cor:subgraph of a link}\ref{cor:subgraph of a link a}})\\
& =
d_{Q_{r-1}}\left(\Phi_{\hatrsigma}(\hatrbeta),\Phi_{\hatrsigma}(\hatrgamma
)\right) & (\text{inductive hypothesis})\\
& =
d_{Q_{r}}\left(\Phi_{\hatrsigma}(\hatrbeta)0,\Phi_{\hatrsigma}(\hatrgamma
)0\right)\\
& = d_{Q_{r}}\left(\Phi_{\rsigma}(\rbeta), \Phi_{\rsigma}(\rgamma)\right).
\end{align*}
The case where $\rbeta,\rgamma \in Y_\rsigma$ is similar, except
Corollary~\ref{cor:subgraph of a link}\ref{cor:subgraph of a link b} is used
instead of Corollary~\ref{cor:subgraph of a link}\ref{cor:subgraph of a link a}
and we append a 1 in place of a 0. Lastly, suppose that $\rbeta\in X_\rsigma$
while $\rgamma \in Y_\rsigma$. Then $\rbeta =\hatrbeta st$ and $b_{\llb
2r-1,2r+1\rrb}(\rgamma) =\checkrgamma st$. Thus, we have
\begin{align*}
d_{B(\ralpha)}(\rbeta,\rgamma) & = d_{B(\ralpha)}\left(\rbeta,b_{\llb
2r-1,2r+1\rrb}(\rgamma)\right) + 1 & (\text{Corollary~\ref{cor:subgraph of a
link}\ref{cor:subgraph of a link c}})\\
& = d_{B(\ralpha)}\left(\hatrbeta st,\checkrgamma st\right) + 1  \\
& = d_{B(\hatrsigma)}\left(\hatrbeta,\checkrgamma \right) + 1 &
(\text{Corollary~\ref{cor:subgraph of a link}\ref{cor:subgraph of a link a}})\\
& = d_{Q_{r-1}}\left(\Phi_{\hatrsigma}(\hatrbeta),
\Phi_{\hatrsigma}(\checkrgamma )\right)+1 & (\text{inductive hypothesis})\\
& = d_{Q_{r}}\left(\Phi_{\hatrsigma}(\hatrbeta)0,
\Phi_{\hatrsigma}(\checkrgamma )1\right)\\
& = d_{Q_{r}}\left(\Phi_{\rsigma}(\rbeta), \Phi_{\rsigma}(\rgamma)\right).
\end{align*}
This verifies that $\Phi$ is an isometric embedding. It follows that
$B(\ralpha)$ is a partial cube with isometric dimension at most
$\rank(\ralpha)$.
\end{proof}

\begin{example}\label{ex:embed the lolli}
Consider the reduced expressions $\rgamma_1,\ldots,\rgamma_5$ given in
Example~\ref{ex:braid graphs}(c). The braid graph $B(\rgamma_1)$ was shown in
Figure~\ref{fig:braidgraphc}. In Example~\ref{ex:linkschains}, we showed that
$\rgamma_1$ is a link of rank $3$. Proposition~\ref{prop:braid graph for a link
is a partial cube} guarantees that there are at least five distinct isometric
embeddings of $B(\rgamma_1)$ into $Q_3$, one for each of $\rgamma_1$,
$\rgamma_2$, $\rgamma_3$, $\rgamma_4$, and $\rgamma_5$. One possible embedding,
using  $\rgamma_4$, is shown in Figure~\ref{fig:banner embedding}. It is clear
in this example that $\dim_I(B(\rgamma_1)) = 3 = \rank(\rgamma_1)$ since $Q_2$
has only four vertices.
\end{example} 

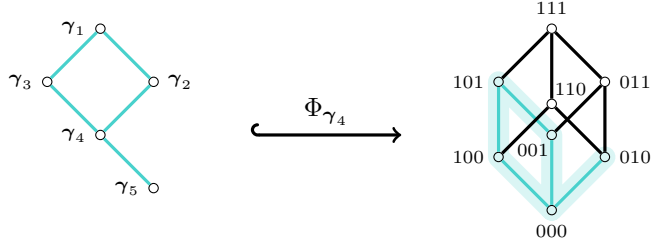
\begin{figure}[!ht]
	\centering
	{
\begin{tikzpicture}[every circle node/.style={draw, circle ,inner sep=1.25pt}]
			\node [circle] (1) [label=left:$\scriptstyle \rgamma_4$] at (0,1){};
			\node [circle] (2) [label=left:$\scriptstyle \rgamma_5$] at (.707,.293){};
			\node [circle] (3) [label=left:$\scriptstyle \rgamma_3$] at (-.707,1.707){};
			\node [circle] (4) [label=right:$\scriptstyle \rgamma_2$] at (.707,1.707){};
			\node [circle] (5) [label=left:$\scriptstyle\rgamma_1$] at (0,2.414){};
			\draw [turq,-, very thick] (1) to (2);
			\draw [turq,-, very thick] (1) to (3);
			\draw [turq,-, very thick] (1) to (4);
			\draw [turq,-, very thick] (3) to (5);
			\draw [turq,-, very thick] (5) to (4);
			\node [circle] (6) at (6,1){};
			\node [circle] (7) [label=left: $\scriptstyle 101$] at (5.293,1.707){};
			\node [circle] (8) [label=right:$\scriptstyle 011$] at (6.707,1.707){};
			\node [circle] (9) [label=above: $\scriptstyle 111$] at (6,2.414){};
			\node [circle] (10) [label=below: $\scriptstyle 000$] at (6,0){};
			\node [circle] (11) [label=left: $\scriptstyle 100$] at (5.293,0.707){};
			\node [circle] (12) [label=right: $\scriptstyle 010$] at (6.707,0.707){};
			\node [circle] (13) at (6,1.414){};
			\node at (6-.25,1-.2) {$\scriptstyle 001$};
			\node at (6+.25,1.414+.2) {$\scriptstyle 110$};
			\draw [turq,-, very thick] (6) to (7);
			\draw [black,-, very thick] (6) to (8);
			\draw [black,-, very thick] (7) to (9);
			\draw [black,-, very thick] (9) to (8);
			\draw [turq,-, very thick] (10) to (11);
			\draw [turq,-, very thick] (10) to (12);
			\draw [black,-, very thick] (11) to (13);
			\draw [black,-, very thick] (13) to (12);
			\draw [black,-, very thick] (13) to (9);
			\draw [turq,-, very thick] (6) to (10);
			\draw [turq,-, very thick] (7) to (11);
			\draw [black,-, very thick] (8) to (12);
			
\draw [very thick, right hook->,    black] (2,1) -- (4,1)
node[midway,above]{$\Phi_{\rgamma_4}$};
			
			\begin{pgfonlayer}{background}
\highlight{8pt}{turq}{(12.center) to (10.center) to (11.center) to (7.center)
to (6.center) to (10.center)}
			\end{pgfonlayer}
		
		\end{tikzpicture}
}\caption{An isometric embedding of $B(\rgamma_1)$ into $Q_3$ for
Example~\ref{ex:embed the lolli}.}\label{fig:banner embedding}
\end{figure}

We make the following conjecture regarding the isometric dimension of the braid
graph for a link.

\begin{conjecture}\label{conj:partial dimension of a link is its rank}
If $(W,S)$ is a simply-laced triangle-free Coxeter system and $\ralpha$ is a
link, then $\dim_I(B(\ralpha)) = \rank(\ralpha)$.
\end{conjecture}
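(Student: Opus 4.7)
The plan is to establish the matching lower bound $\dim_I(B(\ralpha)) \geq \rank(\ralpha)$, since Proposition~\ref{prop:braid graph for a link is a partial cube} already furnishes the upper bound $\dim_I(B(\ralpha)) \leq \rank(\ralpha)$. My strategy rests on the Djokovi\'{c}--Winkler relation $\Theta$ on the edges of a graph, defined by $e\,\Theta\,f$ iff $d(x,u)+d(y,v) \neq d(x,v)+d(y,u)$ for $e=\{x,y\}$ and $f=\{u,v\}$. A classical theorem asserts that $\Theta$ is an equivalence relation on the edge set of any partial cube $G$, and that the number of $\Theta$-classes equals $\dim_I(G)$. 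Moreover, $\Theta$ is preserved under isometric embeddings, essentially because its definition depends only on the metric: if $\psi \colon G \hookrightarrow H$ is isometric and $\psi(e), \psi(f)$ lie in distinct $\Theta$-classes of $H$, then $e, f$ lie in distinct $\Theta$-classes of $G$.

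Label each edge of $B(\ralpha)$ by the integer $i \in \{1,\ldots,r\}$ of the braid shadow $\llb 2i-1, 2i+1 \rrb$ at which the corresponding braid move is applied. The crux of the argument is the observation that, under the isometric embedding $\Phi_\ralpha \colon B(\ralpha) \hookrightarrow Q_r$ constructed in Proposition~\ref{prop:braid graph for a link is a partial cube}, an edge with label $i$ is mapped to a hypercube edge that flips exactly the $i$th coordinate. Indeed, a braid move at positions $\llb 2i-1, 2i+1\rrb$ alters the letters only in positions $2i-1$, $2i$, and $2i+1$; among the even positions that parametrize the coordinates of $\Phi_\ralpha$, only position $2i$ is affected, so $\Phi_\ralpha(\rbeta)$ and $\Phi_\ralpha(b_{\llb 2i-1, 2i+1\rrb}(\rbeta))$ agree in all but the $i$th bit. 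Because two hypercube edges that flip distinct coordinates of $Q_r$ lie in distinct $\Theta$-classes of $Q_r$, the preservation property above forces edges of $B(\ralpha)$ carrying distinct labels to lie in distinct $\Theta$-classes of $B(\ralpha)$.

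To finish, one must verify that each of the $r$ possible labels is actually realized by some edge. This is immediate from the definition of rank: for each $i \in \{1,\ldots,r\}$, there exists $\rbeta \in [\ralpha]$ with $\llb 2i-1, 2i+1 \rrb \in \bs(\rbeta)$, and then $\{\rbeta, b_{\llb 2i-1, 2i+1\rrb}(\rbeta)\}$ is an edge of $B(\ralpha)$ with label $i$. Combining the three observations yields at least $r$ distinct $\Theta$-classes in $B(\ralpha)$, hence $\dim_I(B(\ralpha)) \geq r = \rank(\ralpha)$, and equality follows.

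The main potential obstacle is expository rather than mathematical: one must cite the standard characterization of $\dim_I$ via $\Theta$-classes (and the fact that isometric embeddings preserve $\Theta$) from the partial-cubes literature. A self-contained alternative that bypasses $\Theta$ would be to argue directly that any isometric embedding $\psi\colon B(\ralpha)\hookrightarrow Q_m$ induces an edge-coloring by coordinates of $Q_m$, and then, by comparing with $\Phi_\ralpha$ and exhibiting witnessing 4-cycles in $B(\ralpha)$ that arise from commuting braid moves at different braid shadows, show that edges carrying different position labels are forced to receive different colors, whence $m \geq r$.
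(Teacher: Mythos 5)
The statement you are proving is left open in the paper: the authors only establish the upper bound $\dim_I(B(\ralpha))\leq\rank(\ralpha)$ (Proposition~\ref{prop:braid graph for a link is a partial cube}) and settle the conjecture for Fibonacci links alone, via Theorem~\ref{thm:FibonacciCube} and the known isometric dimension of Fibonacci cubes from~\cite{Cabello2011}; in Section~\ref{sec:closing} they sketch, as a further conjecture, the strategy of identifying the Djokovi\'c--Winkler classes of $B(\ralpha)$ with the braid-shadow edge classes. Your argument follows that same circle of ideas but with a genuine simplification that appears to close the general case: you only need the one implication that edges labeled by distinct shadows are never $\Theta$-related, which follows from the metric invariance of $\Theta$ under the isometric embedding $\Phi_{\ralpha}$ of Proposition~\ref{prop:braid graph for a link is a partial cube} together with the fact that hypercube edges in different coordinate directions are not $\Theta$-related; the authors' stronger identification of $\Theta$-classes with shadow classes is not needed (it follows a posteriori by counting once the dimensions agree). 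Two details deserve a sentence each. First, the labeling of edges by shadows is well defined because the two endpoints of an edge differ exactly in the three positions of the shadow used, and distinct shadows of a link overlap in at most one position. Second, your assertion that the $i$th bit actually flips (rather than merely that no other bit can change) needs justification: it follows from Proposition~\ref{prop:core} and Remark~\ref{rem:support for even positions}, which guarantee that $\supp_{\llb 2i\rrb}([\ralpha])$ is exactly the two-element support of the shadow so that precisely one endpoint agrees with $\ralpha$ in position $2i$, or, more simply, from the fact that $\Phi_{\ralpha}$ is an isometric embedding, so adjacent vertices map to adjacent vertices of $Q_{\rank(\ralpha)}$ and must differ in exactly one bit, which by your parity observation can only be the $i$th. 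With those points made explicit, and citing either the standard fact that the number of $\Theta$-classes of a partial cube equals its isometric dimension (see~\cite{ovchinnikov2008partial}) or your self-contained variant that any isometric embedding into $Q_m$ must send edges with distinct labels to distinct coordinate directions so that $m\geq\rank(\ralpha)$, the lower bound follows and the conjecture is resolved — so your proposal goes beyond what the paper proves, rather than reproducing its (nonexistent) proof.
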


Given any reduced expression in a simply-laced triangle-free Coxeter system, we
can apply Corollary~\ref{cor:boxproductsofbraidgraphs},
Proposition~\ref{prop:braid graph for a link is a partial cube}, and
Proposition~\ref{prop:boxprodpartial} to immediately obtain the following
theorem, which is the main result of this paper. Note that one can acquire the
appropriate labeling in terms of binary strings by applying
Proposition~\ref{prop:braid graph for a link is a partial cube} to each link
factor and then concatenating the resulting binary strings.

\begin{theorem}\label{thm:braid graphs are partial cubes}
If $(W,S)$ is a simply-laced triangle-free Coxeter system and $\ralpha$ is a
reduced expression for some $w\in W$, then there is an isometric embedding of
$B(\ralpha)$ into $Q_{\rank(\ralpha)}$. In particular, $B(\ralpha)$ is a
partial cube with isometric dimension at most $\rank(\ralpha)$.
\end{theorem}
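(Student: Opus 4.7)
The plan is to assemble the theorem mechanically from three results already in hand: the link factorization decomposition (Corollary~\ref{cor:boxproductsofbraidgraphs}), the isometric embedding of the braid graph of a single link (Proposition~\ref{prop:braid graph for a link is a partial cube}), and the closure of partial cubes under the box product (Proposition~\ref{prop:boxprodpartial}). Since all the difficult work has been done in Section~\ref{sec:embedding}, no new technical obstacle appears; the task is simply to stitch these pieces together and to track how the isometric dimension behaves.

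First, I would write $\ralpha$ in its unique link factorization $\ralpha = \ralpha_1 \mid \ralpha_2 \mid \cdots \mid \ralpha_k$ guaranteed by Definition~\ref{def:link factor} and the remark immediately following it. By Corollary~\ref{cor:boxproductsofbraidgraphs}, we have
\[
B(\ralpha) \cong B(\ralpha_1) \square B(\ralpha_2) \square \cdots \square B(\ralpha_k).
\]
Applying Proposition~\ref{prop:braid graph for a link is a partial cube} to each link factor yields that each $B(\ralpha_i)$ is a partial cube with $\dim_I(B(\ralpha_i)) \leq \rank(\ralpha_i)$, via the isometric embedding $\Phi_{\ralpha_i}\colon [\ralpha_i]\to \{0,1\}^{\rank(\ralpha_i)}$.

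Next, I would iterate Proposition~\ref{prop:boxprodpartial} to conclude that $B(\ralpha)$ is itself a partial cube and that
\[
\dim_I(B(\ralpha)) = \sum_{i=1}^k \dim_I(B(\ralpha_i)) \leq \sum_{i=1}^k \rank(\ralpha_i).
\]
By Proposition~\ref{prop:class as products of factors}, the rank is additive over the link factorization, so $\sum_{i=1}^k \rank(\ralpha_i) = \rank(\ralpha)$, giving the desired upper bound on the isometric dimension.

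Finally, for an explicit model of the embedding into $Q_{\rank(\ralpha)}$, I would use the bijection from Proposition~\ref{prop:class as products of factors} together with the individual embeddings: send
\[
\rbeta_1 \mid \rbeta_2 \mid \cdots \mid \rbeta_k \;\longmapsto\; \Phi_{\ralpha_1}(\rbeta_1)\,\Phi_{\ralpha_2}(\rbeta_2)\,\cdots\,\Phi_{\ralpha_k}(\rbeta_k),
\]
that is, concatenate the binary strings produced on each factor. Because braid moves in distinct link factors act independently (this is the content of Corollary~\ref{cor:boxproductsofbraidgraphs}), distances in $B(\ralpha)$ split as sums of distances in the factor braid graphs, and Hamming distance on the concatenated strings splits the same way; hence the map is isometric. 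The only step requiring real care is verifying this additivity of distances under the box product, but this is exactly Proposition~\ref{prop:boxprodpartial}, so the proof reduces to a clean two-line invocation of that result.
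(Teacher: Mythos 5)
Your proposal matches the paper's own proof essentially verbatim: the paper also obtains the theorem by combining the link factorization and Corollary~\ref{cor:boxproductsofbraidgraphs} with Proposition~\ref{prop:braid graph for a link is a partial cube} and Proposition~\ref{prop:boxprodpartial}, and notes that the explicit embedding is obtained by concatenating the binary strings from each link factor. Your argument is correct, and the extra detail you supply (rank additivity via Proposition~\ref{prop:class as products of factors} and the distance-splitting check) only makes explicit what the paper leaves implicit.
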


Since the number of vertices in $Q_{\rank(\ralpha)}$ is $2^{\rank(\ralpha)}$,
we obtain the follow corollary.

\begin{corollary}\label{cor:braidclassbound}
If $(W,S)$ is a simply-laced triangle-free Coxeter system and $\ralpha$ is a
reduced expression for some $w\in W$, then $\card([\ralpha]) \leq
2^{\rank(\ralpha)}$. Moreover, if $\ralpha$ is a reduced expression with link
factorization $\ralpha_1\mid\ralpha_2\mid\cdots\mid\ralpha_k$, then the bound
is achieved precisely when $\rank(\ralpha_i) \leq 1$ for every $i$.
\end{corollary}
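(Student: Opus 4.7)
The first inequality $\card([\ralpha]) \leq 2^{\rank(\ralpha)}$ is a direct consequence of Theorem~\ref{thm:braid graphs are partial cubes}: the isometric embedding of $B(\ralpha)$ into $Q_{\rank(\ralpha)}$ provided there is in particular injective on vertex sets, and $Q_{\rank(\ralpha)}$ has exactly $2^{\rank(\ralpha)}$ vertices.

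For the equality characterization, the plan is to reduce to a statement about a single link. Proposition~\ref{prop:class as products of factors} gives $\card([\ralpha]) = \prod_{i=1}^k \card([\ralpha_i])$ and $2^{\rank(\ralpha)} = \prod_{i=1}^k 2^{\rank(\ralpha_i)}$; combined with $\card([\ralpha_i]) \leq 2^{\rank(\ralpha_i)}$ applied to each link factor, the overall equality holds if and only if $\card([\ralpha_i]) = 2^{\rank(\ralpha_i)}$ for every $i$. When a link has rank $0$ it is a single letter and $\card([\ralpha_i]) = 1 = 2^0$; when a link has rank $1$ it has the form $sts$ with $m(s,t) = 3$, giving $\card([\ralpha_i]) = 2 = 2^1$. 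Thus the task reduces to proving the strict inequality $\card([\ralpha_i]) < 2^r$ whenever $r := \rank(\ralpha_i) \geq 2$.

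For this final step I would use Lemma~\ref{lem:equalexp} to inject $[\ralpha_i]$ into the Cartesian product $\prod_{j=1}^r \supp_{\llb 2j \rrb}([\ralpha_i])$ via $\rbeta \mapsto (\rbeta_{\llb 2 \rrb}, \ldots, \rbeta_{\llb 2r \rrb})$. Each factor of this product has size $2$, so the ambient set has cardinality $2^r$. By Lemma~\ref{lem:TeamJJJnew}, every tuple in the image has distinct consecutive coordinates. To exhibit a tuple outside the image, invoke Corollary~\ref{cor:singletonintersection}: the intersection $\supp_{\llb 2 \rrb}([\ralpha_i]) \cap \supp_{\llb 4 \rrb}([\ralpha_i])$ is a singleton, say $\{c\}$, and the tuple placing $c$ in both of its first two coordinates (with arbitrary admissible entries elsewhere) lies in the product but violates the distinctness constraint and so cannot be in the image. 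Hence the injection fails to be surjective, forcing $\card([\ralpha_i]) < 2^r$.

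The main obstacle is recognizing that Lemmas~\ref{lem:equalexp} and~\ref{lem:TeamJJJnew} together with Corollary~\ref{cor:singletonintersection} furnish both the injection bounding the braid class and the missing tuple that prevents surjectivity; after this observation the argument is a straightforward assembly of already-established structural results.
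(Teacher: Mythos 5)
Your proposal is correct. The first half is exactly the paper's argument: the corollary is stated immediately after Theorem~\ref{thm:braid graphs are partial cubes} with only the observation that $Q_{\rank(\ralpha)}$ has $2^{\rank(\ralpha)}$ vertices, so the injectivity of the isometric embedding gives the bound. The paper never writes out a proof of the ``Moreover'' clause, so your second half supplies an argument the authors leave implicit, and it holds up: Proposition~\ref{prop:class as products of factors} reduces equality to each link factor; rank~$0$ and rank~$1$ factors clearly attain $1=2^0$ and $2=2^1$; and for a link of rank $r\geq 2$ the map $\rbeta\mapsto(\rbeta_{\llb 2\rrb},\ldots,\rbeta_{\llb 2r\rrb})$ is injective by Lemma~\ref{lem:equalexp}, lands in a product of $r$ two-element sets (each $\supp_{\llb 2j\rrb}([\ralpha_i])$ has size exactly two by Proposition~\ref{prop:core} and Remark~\ref{rem:support for even positions} --- worth citing explicitly, since you assert it without justification), and misses the tuple that repeats the single element $c$ of $\supp_{\llb 2\rrb}([\ralpha_i])\cap\supp_{\llb 4\rrb}([\ralpha_i])$ in its first two coordinates, because Lemma~\ref{lem:TeamJJJnew} forces consecutive even-position letters of any link in the class to differ. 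This gives the strict inequality $\card([\ralpha_i])<2^r$ and hence the full characterization; note in passing that your injection into the product of even-position supports also reproves the bound for links without invoking the hypercube embedding at all, which is a small bonus of your route over the paper's.
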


If Conjecture~\ref{conj:partial dimension of a link is its rank} is true, then
we immediately obtain the following conjecture by applying
Proposition~\ref{prop:boxprodpartial}.

\begin{conjecture}\label{conj:partial dimension of a reduced expression is sum
of ranks}
If $(W,S)$ is simply-laced triangle-free Coxeter system and $\ralpha$ is a
reduced expression with link factorization
$\ralpha_1\mid\ralpha_2\mid\cdots\mid\ralpha_k$, then
\[
\dim_I(B(\ralpha))= \sum_{i=1}^k \rank(\ralpha_i).
\]
\end{conjecture}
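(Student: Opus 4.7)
The plan is to obtain Conjecture~\ref{conj:partial dimension of a reduced expression is sum of ranks} by stitching three ingredients together. First, I would invoke Corollary~\ref{cor:boxproductsofbraidgraphs} to identify $B(\ralpha)$ with the box product $B(\ralpha_1) \,\square\, \cdots \,\square\, B(\ralpha_k)$. Each factor is a partial cube by Proposition~\ref{prop:braid graph for a link is a partial cube}, so a straightforward induction on $k$ that repeatedly applies Proposition~\ref{prop:boxprodpartial} yields
\[
\dim_I(B(\ralpha)) \;=\; \sum_{i=1}^{k} \dim_I(B(\ralpha_i)).
\]
Assuming Conjecture~\ref{conj:partial dimension of a link is its rank}, each summand on the right equals $\rank(\ralpha_i)$ and the desired identity follows; Proposition~\ref{prop:class as products of factors} moreover records $\rank(\ralpha) = \sum_i \rank(\ralpha_i)$, giving the clean reformulation $\dim_I(B(\ralpha)) = \rank(\ralpha)$.

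The hard part is therefore the input Conjecture~\ref{conj:partial dimension of a link is its rank}. Proposition~\ref{prop:braid graph for a link is a partial cube} already supplies the upper bound via the embedding $\Phi_{\ralpha_i}$; only the matching lower bound $\dim_I(B(\ralpha_i)) \geq \rank(\ralpha_i)$ is missing. The natural tool is the Djokovi\'c--Winkler $\Theta$-relation on edges of a partial cube (two edges $uv$ and $xy$ are $\Theta$-related when $d(u,x)+d(v,y) \neq d(u,y)+d(v,x)$), whose number of equivalence classes is known to equal $\dim_I$. In $Q_r$ two edges are $\Theta$-related precisely when they flip a common coordinate, and because $\Theta$ is defined purely via distances it pulls back faithfully through any isometric embedding.

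I would then inspect how the $r$ braid shadows of a link $\ralpha_i$ of rank $r$ sit inside $\Phi_{\ralpha_i}$. Definition~\ref{def:embedding} makes an edge arising from a braid move at shadow $\llb 2j-1, 2j+1\rrb$ flip exactly the $j$-th bit of the target binary string: Proposition~\ref{prop:core} forces $\supp_{\llb 2j\rrb}([\ralpha_i])$ to be a two-element set, so the move toggles the $j$-th coordinate, while Proposition~\ref{prop:hugh} rules out interference from adjacent shadows at other even positions. Hence edges at a common shadow lie in one $\Theta$-class of $Q_r$ and edges at distinct shadows in distinct classes. Since each shadow is realized in the braid class by the definition of a link, all $r$ coordinates are used, producing $r$ $\Theta$-classes in $B(\ralpha_i)$ and forcing $\dim_I(B(\ralpha_i)) = r$.

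The main obstacle I anticipate is the careful verification, likely by induction on $r$ using the decomposition in Corollary~\ref{cor:subgraph of a link}, that no two shadow-types of edges accidentally collapse into a single $\Theta$-class once one works inside $B(\ralpha_i)$ rather than in $Q_r$. The cleanest structural argument would peel off the rightmost shadow via the partition $[\ralpha_i] = X_\rsigma \cup Y_\rsigma$ from Lemma~\ref{lem:partition of braid class for a link}, apply the inductive hypothesis to $B(\hatrsigma) \cong B(\ralpha_i)[X_\rsigma]$ to produce $r-1$ $\Theta$-classes, and then invoke Corollary~\ref{cor:subgraph of a link}\ref{cor:subgraph of a link c} to certify that the edges joining $X_\rsigma$ and $Y_\rsigma$ constitute one additional $\Theta$-class disjoint from the others.
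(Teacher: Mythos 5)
Your first paragraph is exactly the paper's own treatment: the paper does not prove this statement unconditionally --- it is stated as a conjecture, obtained from Conjecture~\ref{conj:partial dimension of a link is its rank} together with Corollary~\ref{cor:boxproductsofbraidgraphs}, Proposition~\ref{prop:braid graph for a link is a partial cube}, and Proposition~\ref{prop:boxprodpartial} --- and your reduction to the link case reproduces that remark essentially verbatim. Where you go beyond the paper is in sketching a proof of the missing ingredient, the lower bound $\dim_I(B(\ralpha_i))\geq \rank(\ralpha_i)$, via the Djokovi\'c--Winkler relation; this is precisely the strategy the authors float (but leave open) in item (2) of Section~\ref{sec:closing}. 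Your sketch looks sound, and is in fact simpler than you fear: since $\Theta$ is defined purely in terms of distances, it pulls back \emph{exactly}, in both directions, along the isometric embedding $\Phi_{\ralpha_i}$ of Proposition~\ref{prop:braid graph for a link is a partial cube}, so two edges of $B(\ralpha_i)$ are $\Theta$-related if and only if their images flip the same coordinate of $Q_r$. A braid move at the shadow $\llb 2j-1,2j+1\rrb$ alters only positions $2j-1,2j,2j+1$, and by Proposition~\ref{prop:core} the letter in position $2j$ toggles between the two elements of $\supp_{\llb 2j\rrb}([\ralpha_i])$, one of which is the letter of $\ralpha_i$ there; hence such an edge flips exactly bit $j$ (Proposition~\ref{prop:hugh} is not really what is needed for this point). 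Since each of the $r$ shadows is realized somewhere in the braid class, you get $r$ edges that are pairwise non-$\Theta$-related in $B(\ralpha_i)$; under any isometric embedding into $Q_m$ these must flip pairwise distinct coordinates (edges flipping a common coordinate of a hypercube are $\Theta$-related there, hence, by isometry, in $B(\ralpha_i)$), so $m\geq r$ and $\dim_I(B(\ralpha_i))=\rank(\ralpha_i)$. In particular, the ``collapse'' you worry about in your last paragraph cannot occur, and the proposed induction via Corollary~\ref{cor:subgraph of a link} is unnecessary. Modulo a careful write-up, your route would upgrade Conjecture~\ref{conj:partial dimension of a link is its rank}, the present statement, and the $\Theta$-class description in Section~\ref{sec:closing}(2) from conjectures to theorems, which is strictly more than the paper establishes.
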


If $(W,S)$ is a simply-laced Coxeter system and $w\in W$ such that $\ell(w)=m$,
then the maximum number of braid shadows that any reduced expression for $w$
can have is $\lfloor \frac{m-1}{2}\rfloor$. For simply-laced Coxeter systems,
it is clear that as the length increases, so too does the number of possible
braid shadows. For finite Coxeter groups, the length function is bounded above
by the length of the longest element. It turns out that all of the finite
simply-laced Coxeter systems are triangle free~\cite{Humphreys1990}. Moreover,
every finite simply-laced Coxeter system is isomorphic to a direct product of
some combination of Coxeter systems of types $A_n$ ($n\geq 1$), $D_n$ ($n\geq
4$), $E_6$, $E_7$, and $E_8$. For each of these groups, we can utilize
Corollary~\ref{cor:braidclassbound} to obtain an upper bound on the cardinality
of any braid class in that group.

Since the length of the longest element in the Coxeter system of type $A_n$ is
$\frac{(n+1)n}{2}$, the maximum number of braid shadows that a reduced
expression for an element in $W(A_n)$ could have is $\lfloor
\frac{n^2+n-2}{4}\rfloor$.  This bound is attained at least when $n=1,2,3$.
Nonetheless, Corollary~\ref{cor:braidclassbound} implies that if $\ralpha$ is a
reduced expression for some element in $W(A_n)$, then $\card([\ralpha])\leq
2^{\lfloor \frac{n^2+n-2}{4}\rfloor}$. For example, if $n=3$, then the
cardinality of every braid class in $W(A_3)$ must be less than or equal to 4.
However, the maximum size of a braid class in $W(A_3)$ is 3. As $n$ increases,
our bound deteriorates. In~\cite{Zollinger1994a}, Zollinger establishes sharp
upper bounds on the cardinality of a braid class for a fixed length across all
Coxeter systems of type $A_n$.

In the case of type $D_n$, the longest element has length $n^2-n$, and so the
maximum number of braid shadows that a reduced expression for an element in
$W(D_n)$ could have is $\lfloor \frac{n^2-n-1}{2}\rfloor$. Thus, if $\ralpha$
is a reduced expression for some element in $W(D_n)$, then
$\card([\ralpha])\leq 2^{\lfloor \frac{n^2-n-1}{2}\rfloor}$.

\end{section}

\begin{section}{Fibonacci links and Fibonacci cubes}\label{sec:fibonacci}  

We now introduce a special class of links with connections to the Fibonacci
numbers. As usual, we define the Fibonacci numbers via $F_n=F_{n-1}+F_{n-2}$
with $F_1=F_2=1$.

\begin{definition}
Suppose $(W,S)$ is a simply-laced triangle-free Coxeter system. If $\rphi$ is a
link with the property that $\bs([\rphi]) = \bs(\rphi)$, then we will refer to
$\rphi$ as a \emph{Fibonacci link} and the corresponding braid class $[\rphi]$
will be referred to as a \emph{Fibonacci chain}.
\end{definition}

By invoking Lemma~\ref{lem:delete last two letters from special link}, we see
that if $\rphi$ is a Fibonacci link of rank $r\geq 2$, then $\hatrphi$ and
$\doublehatrphi$ are Fibonacci links of ranks $r-1$ and $r-2$, respectively.
The following proposition describes the connection between a Fibonacci chain
and the Coxeter graph. Recall that the \emph{star graph} $S_k$ with one
internal node and $k$ leaves is defined to be the complete bipartite graph
$K_{1,k}$.

\begin{proposition}\label{prop:fibonacci iff star graph}
Suppose $(W,S)$ is a simply-laced triangle-free Coxeter system of type $\Gamma$
and let $\ralpha$ be a link of rank at least one. Then $\ralpha$ is braid
equivalent to a Fibonacci link if and only if the induced subgraph
$\Gamma[\supp(\ralpha)]$ is a star graph.
\end{proposition}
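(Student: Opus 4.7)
The plan is to prove the two directions separately. For the forward direction, suppose $\ralpha$ is braid equivalent to a Fibonacci link $\rphi$ of rank $r$. Since $\bs(\rphi) = \bs([\rphi])$ contains every overlapping interval $\llb 2i-1, 2i+1\rrb$ for $i = 1, \ldots, r$, I can chain the equalities $\rphi_{\llb 2i-1\rrb} = \rphi_{\llb 2i+1\rrb}$ along the shared odd positions to conclude that all odd positions of $\rphi$ carry a single letter $t$. Each even-position letter is adjacent to $t$ in $\Gamma$ by the definition of braid shadow, and triangle-freeness forbids any two distinct even-position letters from being adjacent (they would form a three-cycle with $t$). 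Hence $\Gamma[\supp(\rphi)]$ is a star graph with center $t$, and since braid moves preserve support, $\Gamma[\supp(\ralpha)] = \Gamma[\supp(\rphi)]$ is as well.

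For the converse, suppose $\Gamma[\supp(\ralpha)]$ is a star graph with center $c$, and proceed by induction on $r$. The base $r = 1$ is immediate since every rank-one link is Fibonacci. For $r \geq 2$, Lemma~\ref{lem:structure of overlapping braids} produces $\rsigma \in [\ralpha]$ with $\llb 2r-3, 2r-1\rrb, \llb 2r-1, 2r+1\rrb \in \bs(\rsigma)$. By Lemma~\ref{lem:TeamJJJnew}, the common odd-position letter of these overlapping shadows has two distinct neighbors $x, y$ in $\Gamma[\supp(\ralpha)]$, so in the star graph it must be the center $c$; thus $\rsigma_{\llb 2r-3, 2r+1\rrb} = cxcyc$ with $x, y$ distinct leaves. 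By Lemma~\ref{lem:delete last two letters from special link}, $\hatrsigma$ is a link of rank $r-1$, and since $c, x \in \supp(\hatrsigma)$, the induced subgraph $\Gamma[\supp(\hatrsigma)]$ is again a star with center $c$.

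The inductive hypothesis supplies a Fibonacci link $\rpsi \in [\hatrsigma]$. Combining with the already-proved forward direction applied to $\rpsi$, the odd positions of $\rpsi$ are all equal to the center of $\Gamma[\supp(\rpsi)] = \Gamma[\supp(\hatrsigma)]$, which is $c$; in particular $\rpsi$'s last letter is $c$. Setting $\rsigma' := \rpsi \cdot y \cdot c$, the braid-move sequence transforming $\hatrsigma$ into $\rpsi$ acts only on the first $2r-1$ positions and so lifts to a sequence transforming $\rsigma$ into $\rsigma'$, giving $\rsigma' \in [\ralpha]$. Every odd position of $\rsigma'$ holds $c$ and every even position holds a leaf adjacent to $c$, so each interval $\llb 2i-1, 2i+1\rrb$ belongs to $\bs(\rsigma')$, making $\rsigma'$ the desired Fibonacci link.

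The main subtlety will be the edge case of the inductive step with $|\supp(\hatrsigma)| = 2$, where $\Gamma[\supp(\hatrsigma)]$ is just the edge $\{c, x\}$ and has no canonical center, so the forward direction does not by itself pin down the orientation of $\rpsi$. A short dihedral subgroup argument shows this case arises only when $r - 1 = 1$; the corresponding rank-one braid class $[\hatrsigma]$ consists of the two orientations $cxc$ and $xcx$, and I simply choose $\rpsi = cxc$. Equivalently, one can treat $r = 2$ as an additional base case handled directly by Lemma~\ref{lem:structure of overlapping braids}, which delivers a Fibonacci link without any further work.
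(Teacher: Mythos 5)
Your proof is correct and follows essentially the same route as the paper: the forward direction reads the star structure off the constant odd-position letter of a Fibonacci link together with triangle-freeness, and the converse is the same induction built on Lemma~\ref{lem:structure of overlapping braids}, Lemma~\ref{lem:delete last two letters from special link}, and the lifting of braid moves back to $[\ralpha]$ (Lemma~\ref{lem:partition of braid class for a link}(b) in the paper). The only place you go beyond the paper is the $\card(\supp(\hatrsigma))=2$ edge case (which indeed occurs only when the truncated link has rank one), where you justify that the Fibonacci link $\rpsi$ can be chosen to end in the center $c$; the paper simply asserts that its $\rpsi$ ends in $s$, so your extra care tightens that step without changing the argument.
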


\begin{proof}
Suppose that $\rphi \in [\ralpha]$ is a Fibonacci link and write $\rphi =
s_{x_1}s_{x_2} \cdots s_{x_{2r+1}}$. By definition, $\llb 2i-1,2i+1 \rrb \in
\bs(\rphi)$ for all $i\in \{1,2,\ldots,r\}$, which implies that the letters in
each factor $\rphi_{\llb 2i-1,2i+1\rrb}$ satisfy $s_{x_{2i-1}}=s_{x_{2i+1}}$
and $m(s_{x_{2i-1}},s_{x_{2i}}) = 3=m(s_{x_{2i}},s_{x_{2i+1}})$. If we write
$s:=s_{x_1}$, then we have $s = s_{x_{2i+1}}$ for all $i\in \{1,2,\ldots,r\}$,
so that $\supp(\ralpha) = \supp(\rphi) =
\{s,s_{x_2},s_{x_4},\ldots,s_{x_{2r}}\}$, possibly with repeats among
$s_{x_2},s_{x_4},\ldots,s_{x_{2r}}$. Notice that $m(s_{x_{2k}},s_{x_{2j}}) = 2$
whenever $s_{x_{2k}}\neq s_{x_{2j}}$ since $m(s,s_{x_{2k}})= 3 =
m(s,s_{x_{2j}})$ and $\Gamma$ has no three-cycles. Thus, the subgraph of
$\Gamma$ induced by $\supp(\ralpha)$ is a star graph with
$\card(\supp(\ralpha))-1\leq r$ leaves.

We will prove the converse by induction. If $\ralpha$ is a link of rank 1, then
$\ralpha = sts$ for some $s,t \in S$ with $m(s,t) = 3$, in which case,
$\ralpha$ is a Fibonacci link. Now, assume that the statement holds for all
links of rank $r\geq 1$ and let $\ralpha$ be a link of rank $r+1$ such that the
induced subgraph $\Gamma[\supp(\ralpha)]$ is a star graph. Then we may assume
that $\supp(\ralpha) = \{s,t_1,t_2,\ldots,t_k\}$ for some $k$, where $m(s,t_i)
= 3$ for all $i$ and $m(t_i,t_j) = 2$ for all $i \neq j$. According to
Lemma~\ref{lem:structure of overlapping braids}, choose a link $\rsigma \in
[\ralpha]$ with the property that $\llb 2r-1,2r+1\rrb,\llb 2r+1,2r+3\rrb \in
\bs(\rsigma)$. Suppose $\rsigma_{\llb 2r-1,2r+1\rrb}=st_ks$. By
Lemma~\ref{lem:delete last two letters from special link}, $\hatrsigma$ is a
link of rank $r$ with $\supp(\hatrsigma) \subseteq \supp(\rsigma)$, so that
$\Gamma[\supp(\hatrsigma)]$ is also a star graph. By the inductive hypothesis,
there exists a Fibonacci link $\rpsi\in [\hatrsigma]$.  But then by
Lemma~\ref{lem:partition of braid class for a link}(b), there exists $\rphi\in
X_{\rsigma}$ such that $\hatrphi=\rpsi$.  This implies that $\rphi=\rpsi t_ks$,
and since $\rpsi$ is a Fibonacci link ending in $s$, it follows that $\rphi$ is
a Fibonacci link braid equivalent to $\ralpha$.
\end{proof} 

Implicit in the proof of the preceding proposition is the fact that every
Fibonacci link of rank $r\geq 1$ can be written in the form
\[
\rphi = st_{1}st_{2}\cdots st_{r-1}st_{r}s
\]
for some $s,t_{1},t_{2},\ldots,t_{r}\in S$, possibly with repeats among
$t_{1},t_{2},\ldots,t_{r}$, where $m(s,t_{i}) = 3$ for all $i \in
\{1,2\ldots,r\}$ and $m(t_{i},t_{j}) =2$ for $t_i\neq t_j$.  We will use this
fact frequently in what follows.

\begin{example}\label{ex:not a fibonacci link}
Not every link is a Fibonacci link. For instance, in the Coxeter system of type
$A_4$, the reduced expression $1213243$ is a link, but not a Fibonacci link.
Similarly, in the Coxeter system of type $D_5$, the reduced expression
$4534313234313$ is a link, but it is not a Fibonacci link. Moreover, neither
link is braid equivalent to a Fibonacci link according to
Proposition~\ref{prop:fibonacci iff star graph}.
\end{example}

\begin{example}\label{ex: fibonacci links in A and D} 
In the Coxeter system of type $A_n$, there are no Fibonacci links of rank 3 or
larger. This follows from Proposition~\ref{prop:braid graph for string} or
Lemma~1 from~\cite{Zollinger1994a}.  The Fibonacci links of rank 0 are
precisely the reduced expressions consisting of a single generator. The
Fibonacci links of ranks 1 and 2 are of the form $sts$ and $tstut$,
respectively, where $m(s,t)=3=m(t,u)$ and $m(s,u)=2$.
\end{example}

\begin{example}
It is more difficult to write down all the Fibonacci links in the Coxeter
system of type $D_n$. The following reduced expressions are Fibonacci links of
ranks 1 through 5:
\[
343,\, 34313,\, 3431323, \, 343132343, \, 34313234313.
\]
In fact, it is not possible to find a Fibonacci link of rank greater than $5$
in type $D_n$. This fact is obvious in type $D_4$ since the longest element has
length 12, but remains true even for large $n$. Notice that
Proposition~\ref{prop:fibonacci iff star graph} implies that every link in type
$D_4$ is braid equivalent to a Fibonacci link since the Coxeter graph of type
$D_4$ is a star graph.
\end{example}

In his MS thesis~\cite{Cadman2021}, Cadman proved that Fibonacci links of
arbitrary rank occur in any simply-laced triangle-free Coxeter system whose
corresponding Coxeter graph contains the Coxeter graph of type
$\widetilde{D}_4$ as a subgraph. The crux of Cadman's construction is proving
that the proposed class of expressions is reduced. His argument relied heavily
on technical results involving root sequences.

The following definition is similar to Definition~\ref{def:braidgraph
decomposition embeddings for a link}.

\begin{definition}
Suppose $(W,S)$ is a simply-laced triangle-free Coxeter system and $\rphi
=st_1st_2\cdots st_{r-1}st_rs$ is a Fibonacci link of rank $r\geq 2$. Denote by
$\Sigma:[\doublehatrphi] \to [\rphi]$ the function that conjoins to each
element of $[\doublehatrphi]$ the letters $t_{r-1}t_rst_r$ on the right.
\end{definition}

Note that the function $\Sigma:[\doublehatrphi] \to [\rphi]$ is well defined by
two applications of Lemma~\ref{lem:partition of braid class for a link}. Each
Fibonacci link carries with it rich recursive properties that manifest in the
structure of the braid class and the corresponding braid graph. The next result
summarizes these properties.

\begin{proposition}\label{prop:fibonaccipartitionnew}
If $(W,S)$ is a simply-laced triangle-free Coxeter system and $\rphi
=st_1st_2\cdots st_{r-1}st_rs$ is a Fibonacci link of rank $r\geq 2$, then the
function $\Sigma:[\doublehatrphi] \to [\rphi]$ is an isometric embedding from
$B(\doublehatrphi)$ into $B(\rphi)$. In particular, $\im(\Sigma) = Y_\rphi$ and
$B(\doublehatrphi)$ is isomorphic to the induced subgraph $B(\rphi)[Y_\rphi]$.
\end{proposition}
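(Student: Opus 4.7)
The plan is to establish three claims: (i) $\Sigma$ is well defined with $\im(\Sigma) \subseteq Y_\rphi$; (ii) $Y_\rphi \subseteq \im(\Sigma)$; and (iii) $\Sigma$ induces a graph isomorphism $B(\doublehatrphi) \to B(\rphi)[Y_\rphi]$. Once these are in place, Corollary~\ref{cor:subgraph of a link}(b)---which says $B(\rphi)[Y_\rphi]$ is an isometric subgraph of $B(\rphi)$---immediately upgrades $\Sigma$ to an isometric embedding and gives the second conclusion of the proposition. Throughout, write $\rphi = st_1st_2\cdots st_{r-1}st_rs$ with $m(s,t_i) = 3$ for every $i$; since $(W,S)$ is simply laced and triangle free, $m(t_i,t_j) \in \{1,2\}$, and consecutive $t$'s must differ (else $\llb 2i, 2i+2\rrb$ would be a braid shadow of $\rphi$, contradicting Proposition~\ref{prop:hugh}).

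For (i), observe that $\doublehatrphi \cdot t_{r-1}t_rst_r = b_{\llb 2r-1, 2r+1\rrb}(\rphi) \in [\rphi]$. For any $\rgamma \in [\doublehatrphi]$, a sequence of braid moves transforming $\doublehatrphi$ into $\rgamma$ acts only on positions in $\llb 1, 2r-3\rrb$, so the same sequence applied to $\doublehatrphi \cdot t_{r-1}t_rst_r$ yields $\Sigma(\rgamma) \in [\rphi]$. Since position $2r$ of $\Sigma(\rgamma)$ equals $s \neq t_r = \rphi_{\llb 2r\rrb}$, it lies in $Y_\rphi$. Injectivity of $\Sigma$ is immediate.

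For (ii), fix $\rbeta \in Y_\rphi$. Because $\rphi$ is Fibonacci, both $\llb 2r-3, 2r-1\rrb$ and $\llb 2r-1, 2r+1\rrb$ are braid shadows of $\rphi$, so Lemma~\ref{lem:partition of braid class for a link} applies with $\rsigma = \rphi$. Unpacking the proof of part~(c) forces $\rbeta_{\llb 2r-2, 2r+1\rrb} = t_{r-1}t_rst_r$; setting $\rxi := b_{\llb 2r-1, 2r+1\rrb}(\rbeta)$, the same lemma gives $\rxi_{\llb 1, 2r-1\rrb} \in [\hatrphi]$. Moreover $\rxi_{\llb 1, 2r-1\rrb}$ ends in $t_{r-1}s$, matching $\hatrphi$ at position $2r-2$, so $\rxi_{\llb 1, 2r-1\rrb} \in X_{\hatrphi}$. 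Since $\hatrphi$ is also Fibonacci, Lemma~\ref{lem:partition of braid class for a link}(b) applied to $\hatrphi$ (with $\rsigma = \hatrphi$) yields $\rbeta_{\llb 1, 2r-3\rrb} = \rxi_{\llb 1, 2r-3\rrb} \in [\doublehatrphi]$, and hence $\rbeta = \Sigma(\rbeta_{\llb 1, 2r-3\rrb})$.

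For (iii), the forward direction is routine: any braid move in $\rgamma$ acts on positions in $\llb 1, 2r-3\rrb$ and lifts verbatim to one in $\Sigma(\rgamma)$, which remains in $Y_\rphi$. The main obstacle is the converse---showing that every braid shadow of $\Sigma(\rgamma)$ either lies inside $\llb 1, 2r-3\rrb$ or equals $\llb 2r-1, 2r+1\rrb$; the latter exits $Y_\rphi$ by Lemma~\ref{lem:partition of braid class for a link}, so yields no edge of $B(\rphi)[Y_\rphi]$. By Lemma~\ref{lem:firstlastletter}(b), $\rgamma$ ends in $st_{r-2}$ or $t_{r-2}s$, so positions $2r-4$ through $2r+1$ of $\Sigma(\rgamma)$ read either $st_{r-2}t_{r-1}t_rst_r$ or $t_{r-2}st_{r-1}t_rst_r$. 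A direct inspection rules out $\llb 2r-4, 2r-2\rrb$, $\llb 2r-3, 2r-1\rrb$, and $\llb 2r-2, 2r\rrb$ as braid shadows, using $s \neq t_j$ for every $j$, the distinctness of consecutive $t$'s (so $t_{r-2} \neq t_{r-1}$ and $t_{r-1} \neq t_r$), and $m(t_i, t_j) \in \{1,2\}$ so that no $t$-to-$t$ braid relation can exist. This matches the edges of $B(\doublehatrphi)$ with those of $B(\rphi)[Y_\rphi]$ under $\Sigma$, completing the proof.
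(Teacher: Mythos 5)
Your proof is correct and takes essentially the same route as the paper: reduce everything to showing $\im(\Sigma)=Y_\rphi$ and invoke Corollary~\ref{cor:subgraph of a link}(b), with surjectivity obtained from Lemma~\ref{lem:partition of braid class for a link}(c) (using that $\rbeta_{\llb 2r-2\rrb}=\rphi_{\llb 2r-2\rrb}$, so the truncated word lies in $X_{\hatrphi}$) and then part~(b) applied to $\hatrphi$ to drop two more letters into $[\doublehatrphi]$. The additional detail you give on well-definedness of $\Sigma$ and on matching edges inside $Y_\rphi$ is material the paper treats as immediate, not a genuinely different argument.
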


\begin{proof}
Using Corollary~\ref{cor:subgraph of a link}\ref{cor:subgraph of a link b}, it
suffices to prove that $\im(\Sigma) = Y_\rphi$. It is clear from the
definitions that $\im(\Sigma) \subseteq Y_\rphi$. Let $\rbeta \in Y_\rphi$.
Then $\llb 2r-1,2r+1\rrb \in S(\rbeta)$ and $\checkrbeta \in [\hatrphi]$
according to Lemma~\ref{lem:partition of braid class for a link}(c). Moreover,
the proof of Lemma~\ref{lem:partition of braid class for a link}(c) shows that
$\rbeta_{\llb 2r-2 \rrb} = \rphi_{\llb 2r-2 \rrb}$ so that $\checkrbeta \in
X_\hatrphi$. Hence, $\hatcheckrbeta \in [\doublehatrphi]$. But $\hatcheckrbeta$
is obtained from $\rbeta$ by deleting the rightmost four letters so that
$\Sigma\left(\hatcheckrbeta\right) = \rbeta$.
\end{proof}

The preceding proposition is a strengthening of Corollary~\ref{cor:subgraph of
a link}\ref{cor:subgraph of a link b} for Fibonacci links. By combining
Corollary~\ref{cor:subgraph of a link} with
Proposition~\ref{prop:fibonaccipartitionnew}, we can conclude that the braid
graph for a Fibonacci link $\rphi$ of rank $r\geq 2$ is, in some sense, a
gluing together of the braid graphs for a Fibonacci link of rank $r-1$ and a
Fibonacci link of rank $r-2$. The gluing of these two isometric subgraphs
corresponds to applying the braid move in the braid shadow $\llb 2r-1,2r+1\rrb$
to any element of $Y_\rphi$. A concrete example is given below.

\begin{example}\label{ex:fibonaccipartition}
Consider the Coxeter system of type $D_4$. The reduced expression $\rphi =
343132343$ is a Fibonacci link of rank 4. The partition of $[\rphi]$ from
Lemma~\ref{lem:partition of braid class for a link} is given by
\[
X_\rphi = \{343132343,434132343,434123243,341312343,343123243\}
\]
and 
\[
Y_\rphi = \{343132434,434132434,341312434\}.
\]
The first block has $F_5 = 5$ elements and is in bijection with $[\hatrphi]$,
while the second block has $F_4 = 3$ elements and is in bijection with
$\left[\doublehatrphi\right]$. The braid graph for $\rphi$ is shown in
Figure~\ref{fig:fibonaccicubepartition}. The isometric subgraphs
$B(\rphi)[X_\rphi]$ and $B(\rphi)[Y_\rphi]$ are highlighted in
\textcolor{green}{green} and \textcolor{magenta}{magenta}, and are isomorphic
to $B(\hatrphi)$ and $B(\doublehatrphi)$, respectively. Similar to
Example~\ref{ex:braid graph with partition}, each of the edges joining
$B(\rphi)[X_{\rphi}]$ and $B(\rphi)[Y_{\rphi}]$ correspond to the braid move
applied in the rightmost braid shadow.
\end{example}

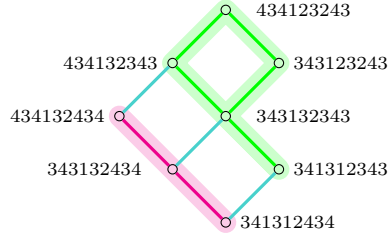
\begin{figure}[h]
\begin{tikzpicture}[every circle node/.style={draw, circle ,inner sep=1.25pt}]

\node [circle] (1) [label=right:\; $\scriptstyle 343132343$] at (0,1){};
\node [circle] (2) [label=right: $\scriptstyle 341312343$] at (.707,.293){};
\node [circle] (3) [label=left: $\scriptstyle 434132343$] at (-.707,1.707){};
\node [circle] (4) [label=right: $\scriptstyle 343123243$] at (.707,1.707){};
\node [circle] (5) [label=right:\;  $\scriptstyle 434123243$] at (0,2.414){};
\node [circle] (6) [label=left: $\scriptstyle 343132434$ \;] at (-.707,.293){};
\node [circle] (7) [label=right: $\scriptstyle 341312434$] at (0,-0.414){};
\node [circle] (8) [label=left: $\scriptstyle  434132434$] at (-1.414,1){};
\draw [green,-, very thick] (1) to (2);
\draw [green,-, very thick] (1) to (3);
\draw [green,-, very thick] (1) to (4);
\draw [turq,-, very thick] (1) to (6);
\draw [turq,-, very thick] (8) to (3);
\draw [magenta,-, very thick] (8) to (6);
\draw [green,-, very thick] (3) to (5);
\draw [magenta,-, very thick] (6) to (7);
\draw [green,-, very thick] (5) to (4);
\draw [turq,-, very thick] (7) to (2);
\begin{pgfonlayer}{background}
\highlight{8pt}{green}{(1.center) to (2.center) to (3.center) to (5.center) to
(4.center) to (1.center)}
\highlight{8pt}{magenta}{(7.center) to (8.center)}
\end{pgfonlayer}
\end{tikzpicture} 
\caption{Braid graph for the Fibonacci link in
Example~\ref{ex:fibonaccipartition} together with a partition of the vertices
according to Lemma~\ref{lem:partition of braid class for a
link}.}\label{fig:fibonaccicubepartition}
\end{figure}

Our next objective is to prove that Fibonacci chains contain a Fibonacci number
of links.

\begin{proposition}\label{prop:fibonaccilinks}
Suppose $(W,S)$ is a simply-laced triangle-free Coxeter system. If $\rphi$ is a
Fibonacci link of rank $r \geq 0$, then $\card([\rphi]) = F_{r+2}$.
\end{proposition}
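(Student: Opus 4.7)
The plan is to proceed by induction on the rank $r$, exploiting the recursive decomposition of a Fibonacci link's braid class afforded by Lemma~\ref{lem:partition of braid class for a link}, Corollary~\ref{cor:subgraph of a link}, and Proposition~\ref{prop:fibonaccipartitionnew}. The base cases are easy to verify: for $r=0$, the Fibonacci link consists of a single generator, so $\card([\rphi]) = 1 = F_2$; for $r=1$, the Fibonacci link has the form $sts$ with $m(s,t)=3$, and $[\rphi] = \{sts, tst\}$, giving $\card([\rphi]) = 2 = F_3$.

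For the inductive step, fix $r\geq 2$ and assume the result for all Fibonacci links of rank strictly less than $r$. Let $\rphi = st_1st_2\cdots st_{r-1}st_rs$ be a Fibonacci link of rank $r$. Since $\bs([\rphi])=\bs(\rphi)$ contains both $\llb 2r-3,2r-1\rrb$ and $\llb 2r-1,2r+1\rrb$, we may take $\rsigma = \rphi$ in the setup of Lemma~\ref{lem:partition of braid class for a link}, so that $\{X_\rphi, Y_\rphi\}$ partitions $[\rphi]$. Hence
\[
\card([\rphi]) = \card(X_\rphi) + \card(Y_\rphi).
\]
By Corollary~\ref{cor:subgraph of a link}\ref{cor:subgraph of a link a}, the embedding $\Omega_\rphi^\rphi$ is a bijection between $[\hatrphi]$ and $X_\rphi$, so $\card(X_\rphi) = \card([\hatrphi])$. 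By Proposition~\ref{prop:fibonaccipartitionnew}, the embedding $\Sigma$ is a bijection between $[\doublehatrphi]$ and $Y_\rphi$, so $\card(Y_\rphi) = \card([\doublehatrphi])$.

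It remains to observe that $\hatrphi$ and $\doublehatrphi$ are themselves Fibonacci links, of ranks $r-1$ and $r-2$ respectively; this is explicitly noted immediately after the definition of Fibonacci link (via Lemma~\ref{lem:delete last two letters from special link}) using the fact that $\bs([\rphi])=\bs(\rphi)$ forces the deleted tails to preserve the Fibonacci property. Applying the inductive hypothesis twice then yields
\[
\card([\rphi]) = \card([\hatrphi]) + \card([\doublehatrphi]) = F_{r+1} + F_r = F_{r+2},
\]
completing the induction. The only subtlety worth double-checking in the write-up is the verification that $\hatrphi$ (and thus $\doublehatrphi$) is genuinely a Fibonacci link rather than just a link, but this is immediate from the explicit form $\rphi = st_1st_2\cdots st_rs$ together with the observation that every interval $\llb 2i-1,2i+1\rrb$ for $1\leq i\leq r-1$ is still a braid shadow of $\hatrphi$ itself.
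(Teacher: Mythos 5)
Your proof is correct and follows essentially the same route as the paper: base cases $r=0,1$, then induction using the partition $\{X_\rphi,Y_\rphi\}$ from Lemma~\ref{lem:partition of braid class for a link} (with $\rsigma=\rphi$, which is legitimate since $\bs(\rphi)=\bs([\rphi])$), identifying $X_\rphi$ with $[\hatrphi]$ and $Y_\rphi$ with $[\doublehatrphi]$ via Corollary~\ref{cor:subgraph of a link} and Proposition~\ref{prop:fibonaccipartitionnew}. Your extra care in checking that $\hatrphi$ and $\doublehatrphi$ are again Fibonacci links matches the remark the paper makes right after the definition, so there is no substantive difference.
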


\begin{proof}
The proof is by induction on $r$. The case with $r=0$ is immediate. If $r=1$,
then $\rphi = sts$ for some $s,t \in S$ with $m(s,t) = 3$. In this case,
$\card([\rphi]) = \card(\{sts,tst\}) = 2 = F_3$. Now, let $r\geq 1$ and assume
that every Fibonacci link of rank $k\leq r$ has $F_{k+2}$ elements in its braid
class. Let $\rphi$ be a Fibonacci link of rank $r+1$. Induction together with
Lemma~\ref{lem:partition of braid class for a link}(a) and
Proposition~\ref{prop:fibonaccipartitionnew} implies that
\[
\card([\rphi]) = \card\left(\left[\doublehatrphi\right]\right) +
\card([\hatrphi])= F_{r+1} + F_{r+2} = F_{r+3},
\]
which proves the claim.
\end{proof}

Next, we show that the braid graph for a Fibonacci link of rank $r$ is
isomorphic to a well-known isometric subgraph of $Q_r$. We define the
\emph{Fibonacci cube} of order $r$ as the subgraph $\F_r := Q_r[V_r]$ induced
by set of vertices
\[
V_r = \{a_1a_2\cdots a_r \in \{0,1\}^r\mid a_i a_{i+1} = 0, \ 1 \leq i \leq
r-1\}.
\]
That is, $V_r$ is the collection of length $r$ binary strings that do not
contain the consecutive substring $11$. Fibonacci cubes have been studied
extensively in the literature and were introduced as a model for
interconnection networks~\cite{Cong1993,Hsu1993}. According
to~\cite{klavvzar2013structure}, the Fibonacci cube $\F_{r}$ is a partial cube
consisting of $F_{r+2}$ many vertices.

\begin{theorem}\label{thm:FibonacciCube}
Suppose $(W,S)$ is a simply-laced triangle-free Coxeter system. If $\rphi$ is a
Fibonacci link of rank $r$, then the braid graph $B(\rphi)$ is isomorphic to
the Fibonacci cube $\F_r$.
\end{theorem}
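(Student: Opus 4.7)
The plan is direct rather than inductive. We already have an isometric embedding $\Phi_\rphi : B(\rphi) \hookrightarrow Q_r$ from Proposition~\ref{prop:braid graph for a link is a partial cube}, and we know $\card([\rphi]) = F_{r+2} = \card(V_r)$ from Proposition~\ref{prop:fibonaccilinks}. Since any isometric embedding is injective and identifies $B(\rphi)$ with the induced subgraph on its image, it is enough to show that $\im(\Phi_\rphi) = V_r$; by the cardinality match, the containment $\im(\Phi_\rphi) \subseteq V_r$ suffices.

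First, I would invoke the structural description of Fibonacci links extracted from the proof of Proposition~\ref{prop:fibonacci iff star graph}: every Fibonacci link $\rphi$ of rank $r\ge 1$ can be written as $\rphi = st_{1}st_{2}\cdots st_{r}s$, where $m(s,t_i)=3$ for every $i$ and $m(t_i,t_j)=2$ whenever $t_i\neq t_j$. In particular, $\rphi_{\llb 2k\rrb}=t_k$, and since $\llb 2k-1,2k+1\rrb\in\bs(\rphi)$ we may apply Proposition~\ref{prop:core} to conclude $\supp_{\llb 2k\rrb}([\rphi]) = \{s,t_k\}$ for each $k\in\{1,\ldots,r\}$.

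The heart of the argument is showing $\Phi_\rphi([\rphi])\subseteq V_r$. Fix $\rbeta\in[\rphi]$ and write $\Phi_\rphi(\rbeta)=a_1a_2\cdots a_r$. Suppose for contradiction that $a_k = a_{k+1}=1$ for some $k$. By the definition of $\Phi_\rphi$, this means $\rbeta_{\llb 2k\rrb}\neq \rphi_{\llb 2k\rrb}=t_k$, and since $\supp_{\llb 2k\rrb}([\rphi])=\{s,t_k\}$, this forces $\rbeta_{\llb 2k\rrb}=s$; symmetrically $\rbeta_{\llb 2k+2\rrb}=s$. But Lemma~\ref{lem:TeamJJJnew}, applied with the support data $\{s,t_k\}$ and $\{s,t_{k+1}\}$ at positions $2k$ and $2k+2$, asserts that $\rbeta_{\llb 2k\rrb}\neq \rbeta_{\llb 2k+2\rrb}$, a contradiction. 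Hence no two consecutive coordinates of $\Phi_\rphi(\rbeta)$ equal $1$, so $\Phi_\rphi(\rbeta)\in V_r$.

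Finally, combining $\Phi_\rphi([\rphi])\subseteq V_r$, the injectivity of $\Phi_\rphi$, and $\card([\rphi])=F_{r+2}=\card(V_r)$ gives $\Phi_\rphi([\rphi])=V_r$. Since $\Phi_\rphi$ is isometric, $B(\rphi)\cong Q_r[\im(\Phi_\rphi)]=Q_r[V_r]=\F_r$, with the cases $r\in\{0,1\}$ handled directly. The only delicate step is the ``no two consecutive $1$'s'' argument, and it is precisely at this point that the Fibonacci structure of the Coxeter relations (encoded via Lemma~\ref{lem:TeamJJJnew}) produces the Fibonacci structure of the cube.
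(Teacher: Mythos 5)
Your proof is correct, but it takes a genuinely different route from the paper's. You argue directly: using the normal form $\rphi = st_1st_2\cdots st_rs$ extracted from Proposition~\ref{prop:fibonacci iff star graph}, Proposition~\ref{prop:core} to identify $\supp_{\llb 2k\rrb}([\rphi])=\{s,t_k\}$, and the conclusion $\rbeta_{\llb 2k\rrb}\neq\rbeta_{\llb 2k+2\rrb}$ of Lemma~\ref{lem:TeamJJJnew} (applicable since every $\rbeta\in[\rphi]$ is itself a link of the same rank with the same class supports), you rule out two consecutive $1$'s in $\Phi_\rphi(\rbeta)$, because $a_k=a_{k+1}=1$ would force the hub generator $s$ into both positions $2k$ and $2k+2$; the cardinality count $\card([\rphi])=F_{r+2}=\card(V_r)$ from Proposition~\ref{prop:fibonaccilinks} then upgrades $\im(\Phi_\rphi)\subseteq V_r$ to equality, and the isometric (hence induced) embedding of Proposition~\ref{prop:braid graph for a link is a partial cube} gives $B(\rphi)\cong Q_r[V_r]=\F_r$. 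The paper instead proves $\im(\Phi_\rphi)\subseteq V_r$ by induction on the rank, using the partition of $[\rphi]$ into $X_\rphi$ and $Y_\rphi$ and the recursive description of $\Phi$ from the proof of Proposition~\ref{prop:braid graph for a link is a partial cube}: strings in $\im(\Phi_\rphi)$ end in $0$ or in $01$ according to whether $\rbeta\in X_\rphi$ or $\rbeta\in Y_\rphi$, mirroring the decomposition of Proposition~\ref{prop:fibonaccipartitionnew} and the Fibonacci recursion $\F_{r+1}$ from $\F_r$ and $\F_{r-1}$. Your version is shorter and makes the source of the ``no consecutive $11$'' condition completely transparent (adjacent even positions cannot both hold the hub letter), while the paper's inductive version stays closer to the recursive architecture of Fibonacci chains that it develops anyway; both hinge on the same cardinality comparison, so neither avoids Proposition~\ref{prop:fibonaccilinks}.
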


\begin{proof}
According to Proposition~\ref{prop:braid graph for a link is a partial cube},
the map $\Phi_\rphi:[\rphi] \to \{0,1\}^r$ is an isometric embedding of
$B(\rphi)$ into $Q_r$. Thus, it suffices to show that $\im\left(\Phi_\rphi
\right) = V_r$. Using Proposition~\ref{prop:fibonaccilinks}, we already know
that $\card([\rphi]) = F_{r+2} = \card(V_r)$. Therefore, we only need to show
that $\im\left(\Phi_\rphi \right) \subseteq V_r$. We proceed by induction on
$r$. The claim is trivial if $r=0$ or $r=1$. Suppose that $r\geq 2$ and assume
the statement is true for every Fibonacci link of rank $k\leq r$. Let $\rphi$
be a link of rank $r+1$ and let $a_1a_2 \cdots a_ra_{r+1} \in \im(\Phi_\rphi)$.
Choose $\rbeta \in [\rphi]$ such that $\Phi_\rphi(\rbeta) = a_1a_2 \cdots
a_ra_{r+1}$. Recall from the proof of Proposition~\ref{prop:braid graph for a
link is a partial cube} that $\Phi_\rphi(\rbeta) = \Phi_{\hatrphi}(\hatrbeta)0$
if $\rbeta \in X_{\rphi}$ and $\Phi_\rphi(\rbeta) =
\Phi_{\hatrphi}\left(\checkrbeta\right)1$ if $\rbeta \in Y_{\rphi}$. In fact,
in the latter case it is easy to see that $\supp_{\llb 2r-2\rrb}(\rbeta) =
\supp_{\llb 2r-2\rrb}(\rphi)$ since $\llb 2r-3,2r-1\rrb, \llb 2r-1,2r+1 \rrb
\in\bs(\rphi)$ while $\rbeta$ and $\rphi$ differ by a single braid move in the
braid shadow $\llb 2r-1,2r+1 \rrb$. Thus, $\Phi_\rphi(\rbeta) =
\Phi_{\doublehatrphi}\left(\hatcheckrbeta\right)01$ if $\rbeta \in Y_\rphi$.
Since $\hatrphi$ and $\doublehatrphi$ are links of rank $r$ and $r-1$,
respectively, the inductive hypothesis implies that $\Phi_{\hatrphi}(\hatrbeta)
\in V_r$ or $\Phi_{\doublehatrphi}\left(\hatcheckrbeta\right) \in V_{r-1}$,
depending on whether $\rbeta \in X_\rphi$ or $\rbeta \in Y_\rphi$. Then it is
clear in either case that $\Phi_\rphi(\rbeta) \in V_{r+1}$ since either
$\Phi_\rphi(\rbeta) = \Phi_{\hatrphi}(\hatrbeta)0$ or $\Phi_\rphi(\rbeta) =
\Phi_{\doublehatrphi}\left(\hatcheckrbeta\right)01$. This completes the proof.
\end{proof}

The previous theorem together with \cite[Proposition~6.1]{Cabello2011} implies
that Conjecture~\ref{conj:partial dimension of a link is its rank} is settled
for Fibonacci links.

\begin{corollary}
If $(W,S)$ is a simply-laced triangle-free Coxeter system and $\rphi$ is a
Fibonacci link, then $\dim_I(B(\rphi)) = \rank(\rphi)$.
\end{corollary}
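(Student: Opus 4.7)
The plan is to deduce this corollary directly from the two ingredients cited in the sentence immediately preceding it. First, I would apply Theorem~\ref{thm:FibonacciCube} to obtain a graph isomorphism $B(\rphi) \cong \F_{r}$, where $r = \rank(\rphi)$. Since the isometric dimension of a connected graph depends only on its abstract metric structure, it is invariant under graph isomorphism, so $\dim_I(B(\rphi)) = \dim_I(\F_{r})$.

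Next, I would cite Proposition~6.1 of~\cite{Cabello2011}, which asserts that the Fibonacci cube $\F_r$ has isometric dimension exactly $r$. Combining these two facts yields the chain of equalities
\[
\dim_I(B(\rphi)) = \dim_I(\F_{r}) = r = \rank(\rphi),
\]
which is the claim.

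It is worth noting that the upper bound $\dim_I(B(\rphi)) \leq \rank(\rphi)$ is already contained in Proposition~\ref{prop:braid graph for a link is a partial cube}, so the corollary is really supplying the matching lower bound. That lower bound is nontrivial in general (the hypercube $Q_r$ into which $B(\rphi)$ embeds via $\Phi_\rphi$ could in principle admit a further isometric embedding into a smaller hypercube) and is precisely the content imported from \cite{Cabello2011}. In this sense there is no real obstacle: the hard technical work was done in establishing Theorem~\ref{thm:FibonacciCube}, and the present corollary is a one-line consequence of that identification together with the known isometric dimension of Fibonacci cubes.
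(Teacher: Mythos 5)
Your proposal is correct and follows exactly the paper's intended argument: the paper derives this corollary in one line from Theorem~\ref{thm:FibonacciCube} together with \cite[Proposition~6.1]{Cabello2011}, which gives $\dim_I(\F_r)=r$. Your additional remark that the real content is the lower bound (the upper bound being Proposition~\ref{prop:braid graph for a link is a partial cube}) is accurate and consistent with the paper's framing of Conjecture~\ref{conj:partial dimension of a link is its rank}.
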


According to \cite[Corollary 4.2]{klavvzar2013structure}, a Fibonacci cube of
dimension $r\geq 2$ has a unique vertex of degree $r$. Certainly this vertex
attains the maximum degree in $\F_r$ since every vertex of $Q_r$ has degree
$r$. If $\rphi$ is a Fibonacci link of rank $r\geq 2$, it must be the case that
the vertex corresponding to $\rphi$ is the vertex that attains the maximum
degree $r$.  This yields the following corollary.

\begin{corollary}
If $(W,S)$ is a simply-laced triangle-free Coxeter system and $\rphi$ is a
Fibonacci link of rank at least two, then $\rphi$ is the unique Fibonacci link
in the Fibonacci chain $[\rphi]$.
\end{corollary}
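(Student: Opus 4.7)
The plan is to combine Theorem~\ref{thm:FibonacciCube} with the structural fact about Fibonacci cubes cited immediately before the statement. The key local observation I would establish first is that any Fibonacci link $\rpsi \in [\rphi]$ corresponds to a vertex of degree exactly $r$ in the braid graph $B(\rphi)$. By the definition of Fibonacci link, $\bs([\rpsi]) = \bs(\rpsi)$, so $\rpsi$ itself contains all $r$ braid shadows of the chain. Each braid shadow yields a braid-equivalent reduced expression that differs from $\rpsi$ by a single braid move, and these $r$ expressions are pairwise distinct because the braid move at $\llb 2i-1, 2i+1\rrb$ swaps the letter at the even position $2i$ while leaving every other even position fixed; Lemma~\ref{lem:equalexp} then distinguishes the results.

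Next, I would transport this through the isomorphism $B(\rphi) \cong \F_r$ provided by Theorem~\ref{thm:FibonacciCube}. Since $\F_r$ is a subgraph of $Q_r$ and every vertex of $Q_r$ has degree $r$, the integer $r$ is the maximum possible degree attained in $\F_r$. By \cite[Corollary~4.2]{klavvzar2013structure}, $\F_r$ has a unique vertex of degree $r$ when $r \geq 2$. Combined with the previous paragraph, the image of any Fibonacci link in $[\rphi]$ under this isomorphism must therefore be the unique maximum-degree vertex of $\F_r$.

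To conclude, if $\rpsi$ is any Fibonacci link in $[\rphi]$, then $\rpsi$ and $\rphi$ map to the same vertex of $\F_r$ under the isomorphism, forcing $\rpsi = \rphi$. I do not anticipate a serious obstacle here: the heavy lifting has already been carried out by Theorem~\ref{thm:FibonacciCube} and by the cited Corollary~4.2. The only bookkeeping point is the distinctness of the $r$ neighbors of $\rpsi$ produced by the $r$ braid shadows, which is handled directly by Lemma~\ref{lem:equalexp} as noted above.
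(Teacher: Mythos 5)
Your proposal is correct and follows essentially the same route as the paper: the paper's argument (given in the paragraph preceding the corollary) likewise identifies any Fibonacci link in the chain with the unique degree-$r$ vertex of $\F_r$ guaranteed by \cite[Corollary~4.2]{klavvzar2013structure}, using Theorem~\ref{thm:FibonacciCube} and the fact that $r$ is the maximum degree since $\F_r\subseteq Q_r$. Your extra bookkeeping via Lemma~\ref{lem:equalexp} to see that the $r$ braid shadows of a Fibonacci link yield $r$ distinct neighbors is a correct justification of a step the paper leaves implicit.
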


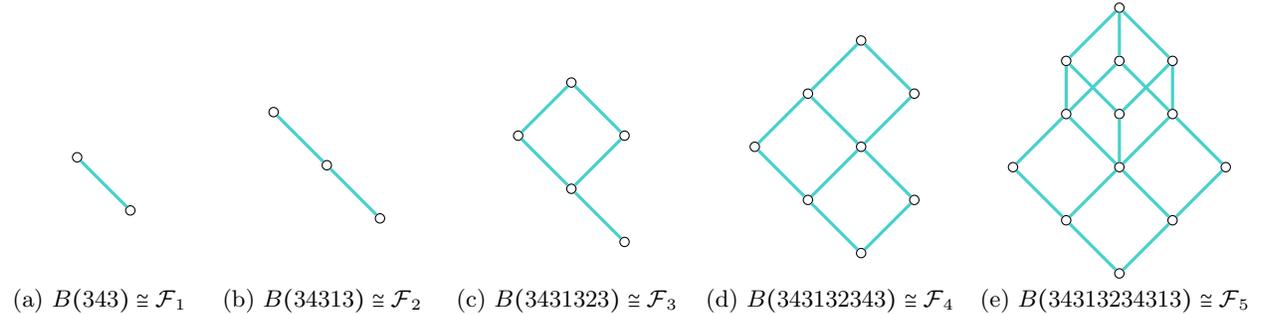
\begin{figure}[ht!]
 \subfloat[$B(343) \cong \F_1$]{
	\begin{minipage}[c][1\width]{
	   0.14\textwidth}
	   \centering
\begin{tikzpicture}[every circle node/.style={draw, circle,inner sep=1.25pt}]
\node [circle] (1) at (0,1){};
\node [circle] (2) at (.707,.293){};
\draw [turq,-, very thick] (1) to (2);

\end{tikzpicture}
	\end{minipage}}
 \hfill 	
  \subfloat[$B(34313)\cong \F_2$]{
	\begin{minipage}[c][1\width]{
	   0.17\textwidth}
	   \centering
\begin{tikzpicture}[every circle node/.style={draw, circle, inner sep=1.25pt}]
\node [circle] (1) at (0,1){};
\node [circle] (2) at (.707,.293){};
\node [circle] (3) at (-.707,1.707){};
\draw [turq,-, very thick] (1) to (2);
\draw [turq,-, very thick] (1) to (3);
\end{tikzpicture}
	\end{minipage}}
 \hfill 	
  \subfloat[$B(3431323) \cong \F_3$]{
	\begin{minipage}[c][1\width]{
	   0.175\textwidth}
	   \centering
\begin{tikzpicture}[every circle node/.style={draw, circle, inner sep=1.25pt}]
\node [circle] (1) at (0,1){};
\node [circle] (2) at (.707,.293){};
\node [circle] (3) at (-.707,1.707){};
\node [circle] (4) at (.707,1.707){};
\node [circle] (5) at (0,2.414){};
\draw [turq,-, very thick] (1) to (2);
\draw [turq,-, very thick] (1) to (3);
\draw [turq,-, very thick] (1) to (4);
\draw [turq,-, very thick] (3) to (5);
\draw [turq,-, very thick] (5) to (4);

\end{tikzpicture}
	\end{minipage}}
 \hfill 	
  \subfloat[$B(343132343) \cong \F_4$]{
	\begin{minipage}[c][1\width]{
	   0.2\textwidth}
	   \centering
\begin{tikzpicture}[every circle node/.style={draw, circle ,inner sep=1.25pt}]

\node [circle] (1) at (0,1){};
\node [circle] (2) at (.707,.293){};
\node [circle] (3) at (-.707,1.707){};
\node [circle] (4) at (.707,1.707){};
\node [circle] (5) at (0,2.414){};
\node [circle] (6) at (-.707,.293){};
\node [circle] (7) at (0,-0.414){};
\node [circle] (8) at (-1.414,1){};
\draw [turq,-, very thick] (1) to (2);
\draw [turq,-, very thick] (1) to (3);
\draw [turq,-, very thick] (1) to (4);
\draw [turq,-, very thick] (1) to (6);
\draw [turq,-, very thick] (8) to (3);
\draw [turq,-, very thick] (8) to (6);
\draw [turq,-, very thick] (3) to (5);
\draw [turq,-, very thick] (6) to (7);
\draw [turq,-, very thick] (5) to (4);
\draw [turq,-, very thick] (7) to (2);

\end{tikzpicture}
	\end{minipage}}
 \hfill	
  \subfloat[$B(34313234313)\cong \F_5$]{
	\begin{minipage}[c][1\width]{
	   0.21\textwidth}
	   \centering
\begin{tikzpicture}[every circle node/.style={draw, circle, inner sep=1.25pt}]
\node [circle] (1) at (0,1){};
\node [circle] (2) at (.707,.293){};
\node [circle] (3) at (-.707,1.707){};
\node [circle] (4) at (.707,1.707){};
\node [circle] (5) at (0,2.414){};
\node [circle] (6) at (-.707,.293){};
\node [circle] (7) at (0,-0.414){};
\node [circle] (8) at (-1.414,1){};
\draw [rotate = 90, turq,-, very thick] (1) to (2);
\draw [turq,-, very thick] (1) to (3);
\draw [turq,-, very thick] (1) to (4);
\draw [turq,-, very thick] (1) to (6);
\draw [turq,-, very thick] (8) to (3);
\draw [turq,-, very thick] (8) to (6);
\draw [turq,-, very thick] (3) to (5);
\draw [turq,-, very thick] (6) to (7);
\draw [turq,-, very thick] (5) to (4);
\draw [turq,-, very thick] (7) to (2);
\node [circle] (9) at (1.414,1){};

\node [circle] (10) at (0,1.707){};
\node [circle] (11) at (0,3.121){};
\node [circle] (13) at (.707,2.414){};
\node [circle] (14) at (-.707,2.414){};
\draw [turq,-, very thick] (1) to (10);
\draw [turq,-, very thick] (10) to (13);
\draw [turq,-, very thick] (10) to (14);
\draw [turq,-, very thick] (14) to (11);
\draw [turq,-, very thick] (13) to (11);
\draw [turq,-, very thick] (13) to (4);
\draw [turq,-, very thick] (4) to (9);
\draw [turq,-, very thick] (9) to (2);
\draw [turq,-, very thick] (5) to (11);
\draw [turq,-, very thick] (14) to (3);
\end{tikzpicture}
	\end{minipage}}
\caption{Braid graphs for several Fibonacci links in the Coxeter system of type
$D_4$. }
\label{fig:Fibonaccicubes}
\end{figure}

\begin{example}\label{ex:fibonaccicubes}
Consider the Fibonacci links in the Coxeter system of type $D_4$ from
Example~\ref{ex: fibonacci links in A and D}:
\[
343,\, 34313,\, 3431323, \, 343132343, \, 34313234313.
\]
According to Theorem~\ref{thm:FibonacciCube}, the corresponding braid graphs
are Fibonacci cubes. Each braid graph is depicted in
Figure~\ref{fig:Fibonaccicubes}. Note that for rank 2 and larger, the Fibonacci
link always corresponds to the vertex of highest degree.
\end{example}

The Fibonacci cube is not the only interesting graph that arises as the braid
graph for a reduced expression. For instance, the \emph{matchable Lucas cubes}
introduced in~\cite{wang2020structure} can be found in the Coxeter system of
type $D_5$ as seen in the following example. The matchable Lucas cubes are very
similar to the Fibonacci cubes, for example, the number of vertices in the
$n$th matchable Lucas cube is equal to the Lucas number $L_n$ defined by $L_n =
L_{n-1} + L_{n-2}$ with initial conditions $L_0 = 2$ and $L_1 = 1$.

\begin{example}\label{ex:lucas cubes}
The braid graph for the reduced expression $4534313234313$ in the Coxeter
system of type $D_5$ from Example~\ref{ex:not a fibonacci link}  is depicted in
Figure~\ref{fig:matchable lucas cube}. It turns out that this graph is
isomorphic to the $6$th matchable Lucas cube as introduced
in~\cite{wang2020structure}. In fact, it is not hard to see that the braid
graph for $4534313234313$ is a gluing together of the braid graphs for the
reduced expressions $453431323$ and $45343132343$. The braid graph for
$453431323$ is highlighted in \textcolor{magenta}{magenta} and is isomorphic to
the $4$th matchable Lucas cube, while the braid graph for $45343132343$ is
highlighted in \textcolor{green}{green} and is isomorphic to the $5$th
matchable Lucas cube. Each of the edges joining the braid graphs highlighted in
\textcolor{magenta}{magenta} and \textcolor{green}{green} correspond to the
braid move applied in rightmost braid shadow. Compare this example with
Examples~\ref{ex:braid graph with partition} and~\ref{ex:fibonaccipartition}.
\end{example}

\begin{figure}[h!]
\begin{tikzpicture}[every circle node/.style={draw, circle, inner sep=1.25pt},
xscale=-1]

\node [circle] (1) at (0,1){};
\node [circle] (2) at (.707,.293){};
\node [circle] (3) at (-.707,1.707){};
\node [circle] (4) at (.707,1.707){};
\node [circle] (5) at (0,2.414){};
\node [circle] (6) at (-.707,.293){};
\node [circle] (7) at (0,-0.414){};
\node [circle] (8) at (-1.414,1){};
\draw [green,-, very thick] (1) to (2);
\draw [turq,-,  very thick] (1) to (3);
\draw [green,-, very thick] (1) to (4);
\draw [green,-, very thick] (1) to (6);
\draw [magenta,-, very thick] (8) to (3);
\draw [turq,-, very thick] (8) to (6);
\draw [magenta,-, very thick] (3) to (5);
\draw [green,-, very thick] (6) to (7);
\draw [turq,-,  very thick] (5) to (4);
\draw [green,-, very thick] (7) to (2);
\node [circle] (9) at (1.414,1){};

\node [circle] (10) at (0,1.707){};
\node [circle] (11) at (0,3.121){};
\node [circle] (13) at (.707,2.414){};
\node [circle] (14) at (-.707,2.414){};
\draw [green,-, very thick] (1) to (10);
\draw [green,-, very thick] (10) to (13);
\draw [turq,-,  very thick] (10) to (14);
\draw [magenta,-, very thick] (14) to (11);
\draw [turq,-,  very thick] (13) to (11);
\draw [green,-, very thick] (13) to (4);
\draw [green,-, very thick] (4) to (9);
\draw [green,-, very thick] (9) to (2);
\draw [magenta,-, very thick] (5) to (11);
\draw [magenta,-, very thick] (14) to (3);

\node [circle] (15) at (1.414+.707,1+.707){};

\node [circle] (16) at (0+.707,2.414+.707){};
\node [circle] (17) at (0+.707,3.121+.707){};
\node [circle] (18) at (.707+.707,2.414+.707){};
\node [circle] (19) at (1.414,2.414){};

\draw [magenta,-, very thick] (5) to (16);
\draw [magenta,-, very thick] (11) to (17);
\draw [magenta,-, very thick] (16) to (17);
\draw [turq,-,very thick] (17) to (18);
\draw [green,-, very thick] (18) to (19);
\draw [green,-, very thick] (19) to (15);
\draw [turq,-, very thick] (19) to (16);
\draw [green,-, very thick] (19) to (4);
\draw [green,-, very thick] (15) to (9);
\draw [green,-, very thick] (18) to (13);

\begin{pgfonlayer}{background}
\highlight{8pt}{green}{(6.center) to (7.center) to  (2.center) to (9.center) to
(15.center)  to (19.center) to  (18.center)   to (13.center) to (10.center)}
\highlight{8pt}{green}{(6.center) to (1.center) to  (4.center) to (19.center) }
	\highlight{8pt}{green}{(9.center) to (4.center) to  (13.center) }
	\highlight{8pt}{green}{(2.center) to (1.center) to  (10.center)  }
\highlight{8pt}{magenta}{(8.center) to (3.center) to  (5.center) to (16.center)
to  (17.center)  to (11.center) to  (14.center)   to (3.center)}
	\highlight{8pt}{magenta}{(11.center) to (5.center)}
\end{pgfonlayer}

\node (h) [label = left: $\scriptstyle 4534313234313$ ] at (.75,0){};
\node (y) at (0,1){};
\node (x) at (1,0){};
\draw [->, thick] (x) to [out=180,in=270] (y);
\node (i) [label = right:  $\scriptstyle 4534313234131$] at (-1.25,2.1){};
\node (z) at (-.707,1.707){};
\node (w) at (-1.5,2.1){};
\draw [->, thick] (w) to [out=0,in=145]  (z);
\end{tikzpicture}
\caption{Braid graph for the reduced expression in Example~\ref{ex:lucas
cubes}.}\label{fig:matchable lucas cube}
\end{figure}
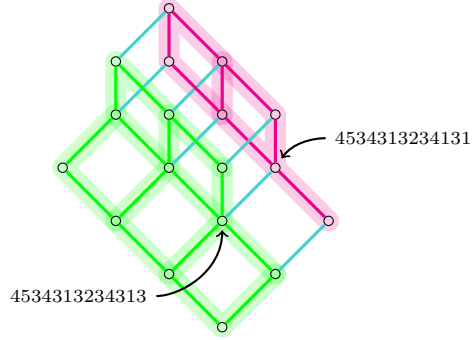

\end{section}

\begin{section}{Closing}\label{sec:closing}

There are several open problems and potential natural generalizations that
arise from this paper that we outline below.

\begin{enumerate}[(1)]

\item Theorem~\ref{thm:braid graphs are partial cubes} (and its counterpart for
links, Proposition~\ref{prop:braid graph for a link is a partial cube}) states
that if $(W,S)$ is a simply-laced triangle-free Coxeter system and $\ralpha$ is
a reduced expression for some $w\in W$, $B(\ralpha)$ is a partial cube with
isometric dimension at most $\rank(\ralpha)$. We conjecture that the isometric
dimension is exactly $\rank(\ralpha)$ (see Conjectures~\ref{conj:partial
dimension of a link is its rank} and~\ref{conj:partial dimension of a reduced
expression is sum of ranks}).

\item There is an intimate connection between partial cubes and the so-called
Djokovi\'c--Winkler relation on the collection of edges
(see~\cite{ovchinnikov2008partial}). It turns out that since braid graphs for
reduced expressions in simply-laced triangle-free Coxeter systems are partial
cubes, the Djokovi\'c--Winkler relation on set of edges of the braid graph is
an equivalence relation.  We conjecture that the corresponding equivalence
classes are the collections of edges that correspond to specific braid shadows.
If this is true, it would settle the conjecture in the previous item.

\item Suppose $(W,S)$ is a simply-laced triangle-free Coxeter system and
$\ralpha$ is a link of rank $r\geq 2$ and assume the notation of
Definition~\ref{def:braidgraph decomposition embeddings for a link}. In
Corollary~\ref{cor:subgraph of a link}, we state that the induced subgraph
$B(\ralpha)[Y_\rsigma]$ is an isometric subgraph of $B(\ralpha)$. In the spirit
of Proposition~\ref{prop:fibonaccipartitionnew}, we conjecture that
$B(\ralpha)[Y_\rsigma]$ is also a braid graph for some link derived from a link
in $Y_\rsigma$.

\item Can we strengthen the conclusion of Theorem~\ref{thm:braid graphs are
partial cubes}? We conjecture that braid graphs in simply-laced triangle-free
Coxeter systems are median graphs. Since Fibonacci cubes are median graphs
(see~\cite[Theorem~1]{klavvzar2005}), this conjecture is true for Fibonacci
links. In fact, every braid graph exhibited in this paper is a median graph.

\item We conjecture that the conclusion of Theorem~\ref{thm:braid graphs are
partial cubes} holds for all simply-laced Coxeter systems, not just those that
are triangle-free.

\item In Section~\ref{sec:type A}, we provide a classification of braid graphs
in Coxeter systems of type $A_n$. It would be interesting to provide an
analogous classification of braid graphs in other simply-laced arbitrary
Coxeter system (e.g., types $D_n$, $\widetilde{D}_n$, and $\widetilde{A}_n$).

\item Given a link in a simply-laced triangle-free Coxeter system, what is the
image of the corresponding braid class under the map described in
Definition~\ref{def:embedding}?

\item Call a subset $X \subseteq\{0,1\}^n$ \emph{admissible} if there exists a
simply-laced triangle-free Coxeter system $(W,S)$ and reduced expression
$\ralpha$ for $w\in W$ such that the induced subgraph $Q_n[X]$ is isomorphic to
$B(\ralpha)$. Are there necessary and sufficient conditions for $X$ to be
admissible?

\item Extend our notion of braid shadow to non-simply-laced Coxeter systems and
prove analogous results to those appearing in Sections~\ref{sec:architecture},
\ref{sec:embedding}, and \ref{sec:fibonacci}. For example, if one generalizes
the notions of braid shadow and link in the natural way, we conjecture that a
result analogous to Proposition~\ref{prop:equalsupport} holds in arbitrary
Coxeter systems as long as the corresponding Coxeter graph does not contain a
three-cycle with edge weights $3, 3, m$, where $m\geq 3$.
\end{enumerate}  

\end{section}

\section*{Acknowledgements} 

We are extremely grateful to Hugh Denoncourt for providing valuable insight on
this project and for assisting with implementing Python code, which we used to
gather data and test conjectures. We are also grateful to the referees for
several useful suggestions.

\bibliographystyle{plain}
\bibliography{BraidClassesSimplyLaced}

\begin{thebibliography}{10}

\bibitem{Bedard1999}
R.~Bedard.
\newblock {On Commutation Classes of Reduced Words in Weyl Groups}.
\newblock {\em European J. Combin}, 20, 1999.

\bibitem{Bergeron2015}
N.~Bergeron, C.~Ceballos, and J.P. Labb{\'{e}}.
\newblock {Fan realizations of subword complexes and multi-associahedra via
  Gale duality}.
\newblock {\em Discrete Comput. Geom.}, 54(1):195--231, 2015.

\bibitem{Cabello2011}
S.~Cabello, D.~Eppstein, and S.~Klav\v{z}ar.
\newblock Structure of {F}ibonacci cubes: a survey.
\newblock {\em Electron. J. Combin.}, 18(1), 2011.

\bibitem{Cadman2021}
Q.~Cadman.
\newblock {\em {Structure of braid graphs in simply-laced Coxeter systems}}.
\newblock {MS Thesis}, Northern Arizona University, 2021.

\bibitem{Cong1993}
B.~Cong, S.~Zheng, and S.~Sharma.
\newblock {On simulations of linear arrays, rings and 2d meshes on Fibonacci
  cube networks}.
\newblock In {\em {Proceedings of the 7th International Parallel Processing
  Symposium}}, pages 747--751. 1993.

\bibitem{Elnitsky1997}
S.~Elnitsky.
\newblock {Rhombic Tilings of Polygons and Classes of Reduced Words in Coxeter
  Groups}.
\newblock {\em J. Combin. Theory, Ser. A}, 77(2), 1997.

\bibitem{Fishel2018}
S.~Fishel, E.~Mili{\'{c}}evi{\'{c}}, R.~Patrias, and B.E. Tenner.
\newblock {Enumerations relating braid and commutation classes}.
\newblock {\em European J. Combin}, 74, 2018.

\bibitem{Geck2000}
M.~Geck and G.~Pfeiffer.
\newblock {\em {Characters of finite Coxeter groups and Iwahori--Hecke
  algebras}}.
\newblock 2000.

\bibitem{Grinberg2017}
D.~Grinberg and A.~Postnikov.
\newblock {Proof of a conjecture of Bergeron, Ceballos and Labb{\'{e}}}.
\newblock {\em New York Journal of Mathematics}, 23:1581--1610, 2017.

\bibitem{Hahn1997}
G.~Hahn and C.~Tardif.
\newblock Graph homomorphisms: structure and symmetry.
\newblock In G.~Hahn and G.~Sabidussi, editors, {\em Graph Symmetry: Algebraic
  Methods and Applications}, pages 107--166. Springer Netherlands, 1997.

\bibitem{Hsu1993}
W.-J. Hsu.
\newblock Fibonacci cubes-a new interconnection topology.
\newblock {\em IEEE Transactions on Parallel and Distributed Systems},
  4(1):3--12, 1993.

\bibitem{Humphreys1990}
J.E. Humphreys.
\newblock {\em {Reflection Groups and Coxeter Groups}}.
\newblock Cambridge University Press, Cambridge, 1990.

\bibitem{klavvzar2005}
S.~Klav\v{z}ar.
\newblock On median nature and enumerative properties of {F}ibonacci-like
  cubes.
\newblock {\em Discrete Math.}, 299(1--3):145--153, 2005.

\bibitem{klavvzar2013structure}
S.~Klav\v{z}ar.
\newblock The {F}ibonacci dimension of a graph.
\newblock {\em Journal of Combinatorial Optimization}, 25(4):505--522, 2013.

\bibitem{Knuth1992}
D.~Knuth.
\newblock {\em {Axioms and Hulls}}.
\newblock Springer-Verlag, Berlin, 1992.

\bibitem{Manin1989}
Yu.I. Manin and V.V. Shekhtman.
\newblock {Arrangements of hyperplanes, higher braid groups and higher Bruhat
  orders}.
\newblock In {\em Explicit universal deformations of Galois representations},
  pages 289--308. Academic Press, Boston, 1989.

\bibitem{Meng2010}
D.~Meng.
\newblock {Reduced decompositions and commutation classes}.
\newblock {\em {\tt arXiv:1009.0886}}, 2010.

\bibitem{ovchinnikov2008partial}
S.~Ovchinnikov.
\newblock Partial cubes: structures, characterizations, and constructions.
\newblock {\em Discrete Mathematics}, 308(23):5597--5621, 2008.

\bibitem{Stanley1984}
R.P. Stanley.
\newblock {On the number of reduced decompositions of elements of Coxeter
  groups}.
\newblock {\em European J. Combin}, 5(4), 1984.

\bibitem{Tenner2006}
B.E. Tenner.
\newblock {Reduced decompositions and permutation patterns}.
\newblock {\em J. Algebraic Combin.}, 24(3):263--284, 2006.

\bibitem{wang2020structure}
Xu~Wang, Xuxu Zhao, and Haiyuan Yao.
\newblock Structure and enumeration results of matchable {L}ucas cubes.
\newblock {\em Discrete Applied Mathematics}, 277:263--279, 2020.

\bibitem{Ziegler1993}
G.M. Ziegler.
\newblock {Higher Bruhat orders and cyclic hyperplane arrangements}.
\newblock {\em Topology}, 32(2), 1993.

\bibitem{Zollinger1994a}
D.M. Zollinger.
\newblock {\em {Equivalence classes of reduced words}}.
\newblock {MS Thesis}, University of Minnesota, 1994.

\end{thebibliography}

\end{document}